\pdfoutput=1
% =============================================================================
% Title:	The mKdV and NLS hierarchies revisited
% Created:	FS 2013 - HS 2015, Universität Zürich
% Authros:	Jan Molnar, Yannick Widmer
% =============================================================================
\documentclass[a4paper,final,fleqn]{article}

\usepackage[margin=3cm]{geometry}

\usepackage[lm,grey]{janm}
\usepackage{math}

\usepackage{marginnote}
\usepackage{xspace}

\usepackage[numbers]{natbib}
\bibliographystyle{spmpsci}

\usepackage{showkeys}

% additional symbols

\newcommand{\php}{\ph_{\!+}}
\newcommand{\phm}{\ph_{\!-}}

\newcommand{\ld}{\text{\tiny\ensuremath{\bullet}}}

% Things we might have to change
\newcommand{\Jt}{J}

\newcommand{\vs}{\varsigma}

\newcommand{\smat}[1]{\left(\begin{smallmatrix}#1\end{smallmatrix}\right)}

\DeclareMathOperator{\spec}{spec}

\newcommand{\mkdv}{\text{mKdV}\xspace}
\newcommand{\dnls}{\text{dNLS}\xspace}

\newcommand{\gradz}{\partial}
\newcommand{\gradh}{\nabla}
\newcommand{\nbr}[1]{\{#1\}}
\newcommand{\gbr}[1]{\{#1\}_{\partial_{x}}}

%Birkhoff coordinates
\newcommand{\xh}{\mathfrak{x}}
\newcommand{\yh}{\mathfrak{y}}
\newcommand{\xz}{x}
\newcommand{\yz}{y}

%gaps
\newcommand{\Gh}{G^{\mathrm{mKdV}}}
\newcommand{\Gz}{G}

\newcommand{\Hz}{S}
\newcommand{\Hh}{K}

%neighborhoods of potentials
\newcommand{\Vh}{V^{\mathrm{mKdV}}}
\newcommand{\Vz}{V}

%the union
\newcommand{\Wh}{W^{\mathrm{mKdV}}}
\newcommand{\Wz}{W}

%neighborhoods of Gaps
\newcommand{\Uh}{U^{\mathrm{mKdV}}}
\newcommand{\Uz}{U}

% \Dl
\newcommand{\Dh}{\Dl_{\mathrm{mKdV}}}
\newcommand{\dDh}{\Dl^{\ld}_{\mathrm{mKdV}}}
\newcommand{\Dz}{\Dl}
\newcommand{\dDz}{\Dl^{\ld}}

% F (nicht im kapitel symmetries of zs disctiminant)
\newcommand{\Fh}{F_{\mathrm{mKdV}}}
\newcommand{\Fz}{F}

\newcommand{\Oz}{\Om}
\newcommand{\Oh}{\Psi}

% contours
\newcommand{\Lh}{L_{\mathrm{mKdV}}}
\newcommand{\Lz}{L}

\newcommand{\Gmz}{\Gm}
\newcommand{\Gmh}{\Sg}

%Fundamental solution, Falls wir ein subscript behalten muss ich noch mals durch für die \grave{M}
%%        (nicht inn kapitel symmetries of zs disctiminant)
\newcommand{\Mz}{M}
\newcommand{\Mh}{M_{\mathrm{mKdV}}}

\newcommand{\gMz}{\grave{M}}
\newcommand{\gMh}{\grave{M}_{\mathrm{mKdV}}}

\renewcommand{\phm}{\ph_{1}}
\renewcommand{\php}{\ph_{2}}

\newcommand{\cph}{P\ph}

\newcommand{\rest}{\sharp}
\newcommand{\omk}{\om^{\kdv}}

\hypersetup{
  pdfinfo={
    Title={The mKdV and NLS hierarchies revisited},
    Author={Jan-Cornelius Molnar, Yannick Widmer},
    Subject={mKdV and NLS hierarchies, mKdV and NLS Birkhoff normal forms},
    Keywords={Hamiltonian hierarchies, modified Korteweg–de Vries, nonlinear Schrödinger, integrable PDEs}
  }
}

% document data
\title{The mKdV and NLS hierarchies revisited}
\author{Jan-Cornelius Molnar, Yannick Widmer}
\date{\today}

\begin{document}

\maketitle

\begin{abstract}
The purpose of this paper is to express the entire hierarchy of mKdV vector fields as restrictions of vector fields in the NLS hierarchy. The result is proved using the normal form theory of the two equations.

\paragraph{Keywords and phrases.} Hamiltonian hierarchies, modified Korteweg–de Vries, nonlinear Schrödinger, integrable PDEs

\paragraph{Mathematics Subject Classification (2010).} 37K10 (primary) 35Q53, 35Q55, 37K05 (secondary)

\end{abstract}

% ===================================================================================================
\section{Introduction}

We consider the \emph{defocusing \mkdv equation}
\[
  u_{t} = -u_{xxx} + 6u^{2}u_{x},
\]
on the circle $\T = \R/\Z$ with $u$ real-valued. The \mkdv equation can be viewed as a Hamiltonian PDE with Hamiltonian
\[
  \Hh(u) \defl \frac{1}{2}\int_{\T} (u_{x}^{2} + u^{4})\,\dx,
\]
on the standard Sobolev space $H^m_{r} \defl H^{m}(\T,\R)$, $m\ge 0$, as phase space endowed with the Poisson bracket proposed by Gardner
\begin{equation}
  \label{mkdv-poi}
  \gbr{F,G} \defl \int_{\T} \gradh_{u} F\,\partial_x\gradh_{u} G\, \dx.
\end{equation}
Here, $\gradh_{u} F$ denotes the $L^{2}$-gradient of a $C^{1}$-functional on $H_{r}^{0}$. The mean value $[u] \defl \int_{\T} u\,\dx$ is a Casimir for the Gardner bracket and hence is preserved by the \mkdv flow. The defocusing \mkdv equation then takes the form $u_{t} = \gbr{u,\Hh}$. This equation admits an infinite sequence of recursively defined pairwise Poisson commuting integrals referred to as \emph{\mkdv hierarchy},
\begin{align*}
  \Hh_{1}(u) = \frac{1}{2}\int_{\T} u^{2}\,\dx,\qquad
  \Hh_{2}(u) = \Hh(u) = \frac{1}{2}\int_{\T} (u_{x}^{2} + u^{4})\,\dx,\qquad
  \dotsc.
\end{align*}
Each of these Hamiltonians leads to a Hamiltonian PDE.

%The \mkdv equation is closely related to the \kdv equation
%\[
%  v_{t} = -v_{xxx} + 6vv_{x},\qquad x\in \T,
%\]
%which is a Hamiltonian PDE on the phase space $H^m_{r} \defl H^{m}(\T,\R)$, $m\ge 0$, with Gardner's bracket~\eqref{mkdv-poi} and Hamiltonian
%\[
%  \Hk(v) \defl \frac{1}{2}\int_{\T} (v_{x}^{2} + 2v^{3})\,\dx.
%\]
%It then takes the form $v_{t} = \gbr{v,\Hk}$. \kdv also admits an infinite sequence of recursively defined pairwise Poisson commuting integrals referred to as \emph{\kdv hierarchy},
%\begin{align*}
%  \Hk_{0}(v) = \frac{1}{2}\int_{\T} v^{2}\,\dx,\qquad
%  \Hk_{1}(v) = \Hk(v) = \frac{1}{2}\int_{\T} (v_{x}^{2} + v^{4})\,\dx,\qquad
%  \dotsc.
%\end{align*}
%Each of these Hamiltonians leads to a Hamiltonian PDE.
%The Hamiltonians of the \mkdv and \kdv hierarchies as well as their Hamiltonian vector fields are related by the Mirua map $B(u) = u_{x} + u^{2}$ as follows
%\[
%  \Hh_{m}(u) = \Hk_{m-2}(B(u)),\qquad \partial_{x}\gradh\Hh_{m} = B^{*}(\partial_{x}\gradh \Hk_{m-1}).
%\]
%Here $B^{*}$ denotes the pull back by the Miura map.

It is well known that the mKdV equation, being closely related to the KdV equation via the Miura map~\cite{Miura:1968uq}, is also closely related to the \emph{\nls system}
%
%It turns out that the \mkdv equation is also closely related to the \emph{\nls system}
\begin{equation}
\label{nls-sys}
\begin{split}
  \ii \partial_{t}\phm &= \phantom{-}\gradz_{\php}\Hz = -\partial_{xx}\phm + 2\php\phm^{2},\\
  \ii \partial_{t}\php &= -\gradz_{\phm}\Hz = \phantom{-}\partial_{xx}\php - 2\phm\php^{2}.
\end{split}
\end{equation}
This system can be viewed as a Hamiltonian PDE with Hamiltonian
\begin{equation}
  \label{nls-ham}
  \Hz(\ph) = \int_{\T} (\partial_{x}\php\partial_{x}\phm + \php^{2}\phm^{2}) \,\dx,
\end{equation}
on the phase space $\Hc^{m}_{c} \defl H^{m}_{c}\times H^{m}_{c}$, $m\ge 0$, with Poisson bracket
\begin{equation}
  \label{nls-poi}
  \nbr{F,G} 
  \defl 
  -\ii\int_{\T} 
  (\gradz_{\phm} F\, \gradz_{\php} G 
    - \gradz_{\php}F\,\gradz_{\phm} G)\,\dx.
\end{equation}
Here $H_{c}^{m}$ denotes the standard Sobolev space $H^{m}(\T,\C)$, $\phm$, $\php$ denote the two components of $\ph\in\Hc^{m}_{c}$, and $\gradz_{\phm} F$, $\gradz_{\php} F$ denote the two components of the $\Lc^2$-gradient $\gradz F$ of a $C^1$-functional $F$ on $\Hc^{0}_{c}$.

The Hamiltonian $\Hz(\ph)$ admits for any $m\ge 0$ the invariant real subspaces
\[
  \Hc_{r}^{m} \defl \setdef{\ph\in \Hc_{c}^{m}}{\php =  \ob{\phm}},\quad
  \Hc_{i}^{m} \defl \setdef{\ph\in \Hc_{c}^{m}}{\php = -\ob{\phm}}.
\]
When \eqref{nls-sys} is restricted to $\Hc_{r}^{m}$, with $\ph = (v,\ob{v})$, one obtains the \emph{defocusing \nls} (\dnls) equation
\[
  \ii \partial_{t}v = \ii\nbr{v,\Hz} = -\partial_{xx}v + \abs{v}^{2}v ,\qquad \Hz(v,\ob{v}) = \int_{\T} (\abs{v_{x}}^{2} + \abs{v}^{4}) \,\dx.
\]
Similarly, when \eqref{nls-sys} is restricted to $\Hc_{i}^{m}$, with $\ph = (\ii v,\ii \ob{v})$, one obtains the \emph{focusing \nls} equation
\[
  \ii \partial_{t}v = \ii\nbr{v,\Hz} = -\partial_{xx}v - \abs{v}^{2}v ,\qquad
  \Hz(\ii v,\ii \ob{v}) = -\int_{\T} (\abs{v_{x}}^{2} - \abs{v}^{4}) \,\dx.
\]

The \nls system \eqref{nls-sys} also admits an infinite sequence of recursively defined pairwise Poisson commuting integrals referred to as \emph{\nls hierarchy}\footnote{
In comparison with \cite{Grebert:2014iq}, the $n$th Hamiltonian of the \nls hierarchy (for $n\ge 2$) is multiplied by $(-\ii)^{n+1}$ to make the corresponding Hamiltonian flow real-valued for real-valued $\ph$.
}, $\Hz_{1}(\ph) = \int_{\T} \phm\php  \,\dx$,
\begin{align*} 
  \Hz_{2}(\ph) &= \frac{\ii}{2} \int_{\T} (\php\partial_{x}\phm - \phm\partial_{x}\php)\,\dx,\\
   \Hz(\ph) = \Hz_{3}(\ph)
    &= \;\;\phantom{\ii}\int_{\T} (\partial_{x}\phm\partial_{x}\php + \phm^{2}\php^{2}) \,\dx,\\
  \Hz_{4}(\ph) &= \;\;\ii\int_{\T} (\phm\partial_{xxx}\php - 3\phm^{2}\php\partial_{x}\php) \,\dx,
  \qquad
 \ldots
\end{align*}
The Hamiltonian $\Hz_{4}$ gives rise to the system
\begin{equation}
\label{mkdv-sys}
\begin{split}
  \partial_{t} \phm &= \nbr{\phm,\Hz_{4}}
  					 = -\ii\gradz_{\php}\Hz_{4}
					 = -\partial_{xxx}\phm + 6\phm\php\partial_{x}\phm,\\
  \partial_{t} \php &= \nbr{\php,\Hz_{4}}
  					 = \phantom{-}\ii\gradz_{\phm}\Hz_{4}
					 = -\partial_{xxx}\php + 6\php\phm\partial_{x}\php.
\end{split}
\end{equation}
This system admits, for any $m\ge 1$, the real invariant subspaces
\[
  \Ec_{r}^{m} \defl \setdef{\ph \in \Hc_{r}^m}{\php = \phm},\qquad
  \Ec_{i}^{m} \defl \setdef{\ph \in \Hc_{i}^m}{\php = \phm}.
\]
When \eqref{mkdv-sys} is restricted to $\Ec_{r}^{m}$, with $\ph = (u,u)$ and $u$ real-valued, one obtains the defocusing \mkdv equation
\[
  \partial_{t}u
   = -\ii \gradz_{\php}\Hz_{4}\big|_{(u,u)}
   = \partial_{x}\gradh\Hh_{2}(u)
   = -\partial_{xxx}u + 6u^{2}\partial_{x} u.
\]
Similarly, when \eqref{mkdv-sys} is restricted to $\Ec_{i}^{m}$, with $\ph = (\ii u,\ii u)$ and $u$ real-valued, one obtains the focusing \mkdv equation
\[
  \ii \partial_{t}u
   = -\ii \gradz_{\php}\Hz_{4}\big|_{(\ii u,\ii u)}
   = \partial_{x}\gradh\Hh_{2}(\ii u)
   = \ii(-\partial_{xxx}u - 6u^{2}\partial_{x} u).
\]

The main purpose of this paper is to show that in the same way the entire \mkdv hierarchy is contained in the \nls hierarchy. By a slight abuse of notation, we identify $\Ec_{c}^{m} \defl \setdef{\ph\in \Hc_{c}^{m}}{\php=\phm}$ with $H_{c}^{m}$ and denote the restriction of a functional $F\colon \Hc_{c}^{m}\to \C$ to $H_{c}^{m}$ by $F^{\rest}\colon H_c^m\to \C$, $u \mapsto F(u,u)$. The Hamiltonian vector field of $F$ with respect to the Poisson bracket~\eqref{nls-poi} is denoted by
\[
  X_{F} = -\ii J \gradz F,\qquad J = \mat[\bigg]{0 & 1\\-1 & 0},
\]
and its restriction to $H_{c}^{m}$ is denoted by $X_{F}^{\rest}= -\ii J (\gradz F)^{\rest}$. Similarly, for a function $G$ on $H_{c}^{m}$ we denote its Hamiltonian vector field with respect to the Gardner bracket by
\[
  Y_{G} = \partial_{x} \gradh G.
\]

%\note{jm: maybe remove or just note the identity $Y_{\Hh_{m}} = B^{*}Y_{\Hk_{m-1}}$ as it is well known.}
\begin{thm}
\label{thm:ham-mkdv-nls}
For every $m\ge 1$ the Hamiltonian vector field $Y_{\Hh_{m}}$ of the \mkdv hierarchy and the Hamiltonian vector field $X_{\Hz_{2m}}$ of the \nls hierarchy satisfy
\[
  X_{\Hz_{2m}}^{\rest} = (Y_{\Hh_{m}},Y_{\Hh_{m}})
  \text{ on } H_{c}^{m}.
\]
In addition, $S_{2m}^{\rest} = 0$ on $H_{c}^{m}$ for every $m\ge 1$.\fish
\end{thm}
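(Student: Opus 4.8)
The plan is to separate an easy symmetry part, which already yields $\Hz_{2m}^\rest=0$ and the \emph{shape} of the restricted field, from the genuinely integrable part, which fixes its \emph{value}. I would begin with two elementary observations. For any $C^{1}$ functional $F$ on $\Hc_{c}^{m}$, the chain rule applied to the diagonal embedding $u\mapsto(u,u)$ gives
\[
  \gradh(F^\rest) = (\gradz_{\phm}F)^\rest + (\gradz_{\php}F)^\rest .
\]
Next I would consider the swap involution $\sigma\colon(\phm,\php)\mapsto(\php,\phm)$, which restricts to the identity on $\Ec_{c}^{m}$. A one-line computation shows $\sigma$ is anti-Poisson for \eqref{nls-poi}, i.e. $\nbr{F\circ\sigma,G\circ\sigma}=-\nbr{F,G}\circ\sigma$, and — tracking the sign flip produced at each step of the recursion defining the hierarchy (or, more transparently, from the spectral generating function of the $\Hz_{n}$) — that the generators have definite parity $\Hz_{n}\circ\sigma=(-1)^{n+1}\Hz_{n}$. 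For the even index $n=2m$ this reads $\Hz_{2m}\circ\sigma=-\Hz_{2m}$, and since $\sigma$ fixes the diagonal we obtain $\Hz_{2m}^\rest=-\Hz_{2m}^\rest$, hence $\Hz_{2m}^\rest=0$, which is the ``In addition'' claim.

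Inserting $\Hz_{2m}^\rest=0$ into the chain-rule identity gives $(\gradz_{\phm}\Hz_{2m})^\rest=-(\gradz_{\php}\Hz_{2m})^\rest$, so that
\[
  X_{\Hz_{2m}}^\rest=-\ii J\,(\gradz\Hz_{2m})^\rest
  =\bigl(-\ii(\gradz_{\php}\Hz_{2m})^\rest,\,-\ii(\gradz_{\php}\Hz_{2m})^\rest\bigr).
\]
Both components already coincide, so the theorem collapses to the single scalar identity
\[
  -\ii\,(\gradz_{\php}\Hz_{2m})^\rest=\partial_x\gradh\Hh_{m}=Y_{\Hh_{m}}
  \qquad\text{on }H_{c}^{m},
\]
which I will call $(\ast)$. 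Its base case $m=1$ is immediate from $\gradz_{\php}\Hz_{2}=\ii\partial_x\phm$, and $m=2$ reproduces the mKdV equation itself, so the real content is the uniform statement in $m$.

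For $(\ast)$ the odd members of the hierarchy provide a useful bridge: the same parity computation makes $\Hz_{2m-1}$ symmetric under $\sigma$, whence on the diagonal $(\gradz_{\php}\Hz_{2m-1})^\rest=\frac12\gradh(\Hz_{2m-1}^\rest)$; together with the identification $\Hz_{2m-1}^\rest=2\Hh_{m}$ this gives $(\gradz_{\php}\Hz_{2m-1})^\rest=\gradh\Hh_{m}$, and $(\ast)$ then follows from the diagonal recursion $-\ii(\gradz_{\php}\Hz_{2m})^\rest=\partial_x(\gradz_{\php}\Hz_{2m-1})^\rest$. The clean way to obtain both $\Hz_{2m-1}^\rest=2\Hh_{m}$ and this recursion uniformly in $m$ is the route announced in the abstract: pass to the global Birkhoff coordinates of mKdV and of NLS, determine the image of the diagonal $u\mapsto(u,u)$ under the two normalizing maps, and match frequencies. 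In normal form each side of $(\ast)$ becomes linear, so the identity reduces to an equality of frequency vectors, to be checked on the finitely many nontrivial normal-form coefficients.

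The main obstacle is exactly this uniform comparison. Its subtlety is structural: because $\Hz_{2m}^\rest=0$, the restricted Hamiltonian retains no information about the flow, and everything is carried by the transverse derivative $\gradz_{\php}\Hz_{2m}$, so one cannot simply restrict functionals but must follow the off-diagonal direction throughout (at low order the two sides of the diagonal recursion differ by terms proportional to $\phm-\php$, which vanish only after restriction). Establishing that the two recursion structures — equivalently, the two Birkhoff normalizations — are compatible with the diagonal embedding while keeping track of that transverse direction is where the real work lies; the base-case computations are routine by comparison.
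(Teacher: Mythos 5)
Your reduction of the theorem is correct and, in its final assembly, coincides with the paper's: the parity $\Hz_{n}\circ P=(-1)^{n+1}\Hz_{n}$ does give $\Hz_{2m}^{\rest}=0$, the chain rule then forces $(\gradz_{\phm}\Hz_{2m})^{\rest}=-(\gradz_{\php}\Hz_{2m})^{\rest}$ so that both components of $X_{\Hz_{2m}}^{\rest}$ agree, and the $P$-symmetry of $\gradz\Hz_{2m-1}$ together with $\Hz_{2m-1}^{\rest}=2\Hh_{m}$ reduces everything to the single identity $(\ast)$, i.e.\ to $-\ii J\gradz\Hz_{2m}=\partial_{x}\gradz\Hz_{2m-1}$ on the diagonal. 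All of this the paper also does (it obtains the parity from the symmetry $\Dz(\lm,\ph)=\Dz(-\lm,\cph)$ of the Zakharov--Shabat discriminant rather than from the recursion, and $\Hz_{2m-1}^{\rest}=2\Hh_{m}$ from matching the asymptotic expansions of the discriminants, which is Chodos' identity).

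The gap is that $(\ast)$ itself --- the paper's Theorem~\ref{thm:nls-sym}~(ii) --- is exactly the hard content, and the route you propose for it cannot work. You suggest passing to the two Birkhoff normal forms and ``matching frequencies,'' but (a) in the paper the frequency relation (Theorem~\ref{thm:freq}) is a \emph{consequence} of Theorem~\ref{thm:ham-mkdv-nls}, not an input, and there is no independent access to $\om_{n,2m}^{\rest}=\partial_{I_{n}}\Hz_{2m}$ from restricted data: since $\Hz_{2m}^{\rest}\equiv 0$, this frequency is a derivative of $\Hz_{2m}$ \emph{transverse} to the symmetric subspace $I_{n}=I_{-n}$, which is precisely the information you have discarded by restricting; and (b) even granting the frequencies, equality of two vector fields each linear in its own normal form requires knowing that the two normalizing maps agree on the diagonal up to an action-dependent shift, i.e.\ the angle relation $\gbr{\th_{m}^{\rest},J_{n}}=\dl_{m,n}$ of Theorem~\ref{thm:bnf}~(iii) --- and the paper's proof of that relation already uses the gradient identity you are trying to derive from it. The paper instead proves $(\ast)$ directly at the level of spectral data: it establishes the pointwise identity $\partial_{x}\gradz\Dl(\lm,\ph)-2\lm R\,\gradz\Dl(\lm,\ph)=\xi(\lm,\ph)J\ph$ for the gradient of the discriminant (Theorem~\ref{thm:sym-dl}~(v)), exploits the symmetry of $\xi$ under $\ph\mapsto\cph$, $\lm\mapsto-\lm$ to cancel the inhomogeneous term when $\cph=R_{\al}\ph$, integrates over contours to obtain $\partial_{x}\gradz I_{n,k-1}-2R\,\gradz I_{n,k}=(-1)^{k+1}(\partial_{x}\gradz I_{-n,k-1}-2R\,\gradz I_{-n,k})$, and sums via the trace formula $\sum_{n}I_{n,k}=2^{1-k}\Hz_{k}$. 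Some argument of this kind, tracking the off-diagonal derivative through the spectral construction, is unavoidable; your proposal correctly identifies this as ``where the real work lies'' but does not supply it.
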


Loosely speaking, the above theorem says that each PDE of the \mkdv hierarchy can be viewed as a subsystem of the corresponding PDE in the \nls hierarchy. The defocusing and focusing cases of the PDEs are obtained by restriction to the subspaces $E_{r}^{m} = \setdef{u\in H_{c}^{m}}{u\text{ real-valued}}$ and $E_{i}^{m} = \setdef{\ii u\in H_{c}^{m}}{u\text{ real-valued}}$, respectively. Since the defocusing \mkdv equation on the circle can be identified with the \kdv equation (c.f. e.g.  \cite{Kappeler:2005gt,Kappeler:2006fr}), the \kdv equation on the circle as well can  be viewed as a subsystem of the \dnls equation.

A key ingredient into the proof of Theorem~\ref{thm:ham-mkdv-nls} are the following symmetries of the gradients of the Hamiltonians in the \nls hierarchy.

\begin{thm}
\label{thm:nls-sym}
(i) For every $\ph\in \Hc_{c}^{k-1}$ with $k\ge 1$, and any real $\al$,
\begin{align*}
  \gradz\Hz_{k}(\ph) &= (-1)^{k-1}P\gradz\Hz_{k}(\cph),
  &P &=
  \smat{ & 1 \\ 1 & },\\
  \gradz\Hz_{k}(\ph) &= R_{\al}\gradz\Hz_{k}(R_{\al}\ph),
  &R_{\al} &= \smat{\e^{\ii \al} & \\  & \e^{-\ii \al}}.
\end{align*}
In particular, $X_{\Hz_{k}}(R_{\al}\ph) = R_{\al} X_{\Hz_{k}}(\ph)$ and $X_{\Hz_{k}}(P\ph) = (-1)^{k}PX_{\Hz_{k}}(\ph)$.

(ii)
If $\ph\in \Hc_{c}^{2m-1}$, $m\ge 1$, with $\cph = R_{\al}\ph$ for some real $\al$, then
\[
  -\ii J \gradz \Hz_{2m}(\ph) = \partial_{x} \gradz \Hz_{2m-1}(\ph).\fish
\]
\end{thm}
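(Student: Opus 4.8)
The plan is to deduce both parts from two \emph{scalar} symmetries of the Hamiltonians themselves,
\[
  \Hz_k\circ R_\al = \Hz_k,\qquad \Hz_k\circ P = (-1)^{k-1}\Hz_k,\qquad k\ge 1,
\]
and then to differentiate. First I would establish these identities. The base case $\Hz_1(\ph)=\int_\T\phm\php\,\dx$ is visibly invariant under $R_\al$ (which rescales $\phm,\php$ by reciprocal phases) and under $P$ (which swaps them), and the explicit formulas for $\Hz_2,\Hz_3,\Hz_4$ confirm the alternating sign $(-1)^{k-1}$ under $P$ after one integration by parts. For general $k$ I would argue along the recursive definition of the hierarchy: the operator generating $\gradz\Hz_{k+1}$ from $\gradz\Hz_k$ commutes with $R_\al$ and intertwines with $P$ in precisely the sign-reversing way, so the symmetry type propagates. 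Equivalently, both symmetries are manifest on the Zakharov--Shabat generating functional whose $\lambda$-expansion produces the $\Hz_k$, with $R_\al$ a gauge symmetry and $P$ inducing $\lambda\mapsto-\lambda$.

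To obtain part (i) I would differentiate these scalar identities via the chain rule for the $\Lc^2$-gradient. Since $P$ and $R_\al$ are constant matrices, symmetric with respect to the bilinear pairing defining $\gradz$, one has $\gradz(F\circ A)(\ph)=A\,\gradz F(A\ph)$ for $A\in\{P,R_\al\}$. Combined with $\Hz_k\circ R_\al=\Hz_k$ this yields $\gradz\Hz_k(\ph)=R_\al\gradz\Hz_k(R_\al\ph)$, and combined with $\Hz_k\circ P=(-1)^{k-1}\Hz_k$ together with $P^2=I$ it yields $\gradz\Hz_k(\ph)=(-1)^{k-1}P\gradz\Hz_k(P\ph)$, the two stated formulas. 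The vector-field equivariances then follow from the matrix relations $JR_{-\al}=R_\al J$ and $JP=-PJ$: for instance $X_{\Hz_k}(R_\al\ph)=-\ii JR_{-\al}\gradz\Hz_k(\ph)=R_\al X_{\Hz_k}(\ph)$, while the anticommutation $JP=-PJ$ turns the factor $(-1)^{k-1}$ into $(-1)^k$, giving $X_{\Hz_k}(P\ph)=(-1)^kPX_{\Hz_k}(\ph)$. I note in passing that the $P$-symmetry already yields the second assertion of Theorem~\ref{thm:ham-mkdv-nls}: on the diagonal $P\ph=\ph$, so $\Hz_{2m}(\ph)=(-1)^{2m-1}\Hz_{2m}(\ph)=-\Hz_{2m}(\ph)$, whence $S_{2m}^\rest=0$.

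For part (ii) I would use part (i) to place the two gradients in eigenspaces of the twisted involution $PR_{-\al}$. On $\{P\ph=R_\al\ph\}$ the identities of part (i) give $\gradz\Hz_{2m-1}(\ph)=PR_{-\al}\gradz\Hz_{2m-1}(\ph)$ and $\gradz\Hz_{2m}(\ph)=-PR_{-\al}\gradz\Hz_{2m}(\ph)$, so the two gradients lie in the $(+1)$- and $(-1)$-eigenspaces of $PR_{-\al}$ respectively. This eigenspace constraint cuts the number of independent components in half; the essential content is the diagonal case $\al=0$ entering Theorem~\ref{thm:ham-mkdv-nls}, where $\gradz\Hz_{2m}=(g,-g)$ and $\gradz\Hz_{2m-1}=(h,h)$, and the vector identity $-\ii J\gradz\Hz_{2m}=\partial_x\gradz\Hz_{2m-1}$ collapses to the single scalar relation $\ii g=\partial_x h$. (I verified this directly at $m=1,2$; e.g. at $m=2$, $g=\ii(u_{xxx}-6u^2u_x)$ and $h=-u_{xx}+2u^3$, and indeed $\ii g=-u_{xxx}+6u^2u_x=h_x$.) The remaining values of $\al$ are then handled by conjugating with the gauge rotation dictated by the equivariances of part (i).

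The scalar relation $\ii g=\partial_x h$ is the odd-index instance of the Lenard-type recursion $-\ii J\gradz\Hz_{k+1}=\mathcal B\,\gradz\Hz_k$ underlying the NLS hierarchy, with $\mathcal B$ its second Hamiltonian (recursion) operator. I would prove it by induction along this recursion, the key point being that the nonlocal ($\partial_x^{-1}$) and quadratic correction terms in $\mathcal B$ all take values in the $(-1)$-eigenspace of $P$ and hence drop out exactly when $k=2m-1$ is odd, where $\gradz\Hz_k$ is $P$-even; they survive for even $k$, consistently with the identity being asserted only for the odd-to-even step. The main obstacle I anticipate is controlling these nonlocal terms: one must check that the antiderivatives produced by $\mathcal B$ are genuinely local on the symmetric subspace and cancel. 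I expect the symmetries of part (i)---specifically that $\partial_x$ preserves, while $J$ exchanges, the $P$-eigenspaces---to be exactly what forces this cancellation, leaving the clean first-order relation $-\ii J\gradz\Hz_{2m}=\partial_x\gradz\Hz_{2m-1}$.
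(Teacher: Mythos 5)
Part (i) of your proposal is correct and takes a mildly different route from the paper: you differentiate the scalar symmetries $\Hz_k\circ R_\al=\Hz_k$ and $\Hz_k\circ P=(-1)^{k-1}\Hz_k$, whereas the paper never passes through the scalar identities but works with the gradients of the individual actions $I_{n,k}$ (obtained from the symmetries of the discriminant in Theorem~\ref{thm:sym-dl} and Corollary~\ref{sym-F}) and sums them via the trace formula $\sum_{n\in\Z}I_{n,k}=2^{1-k}\Hz_k$, extending from $\Hc_r^{k-1}$ to $\Hc_c^{k-1}$ by Lemma~\ref{ana-vanish}. Your chain-rule step is clean ($P$ and $R_\al$ are symmetric for the bilinear pairing, so $\gradz(F\circ A)=A\,\gradz F\circ A$), the matrix identities $JR_{-\al}=R_\al J$ and $JP=-PJ$ give the equivariances, and your remark that $\Hz_{2m}^{\rest}=0$ already follows from the $P$-symmetry is a genuine shortcut to the last assertion of Theorem~\ref{thm:ham-mkdv-nls}. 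What you still owe is a proof of the scalar symmetries for all $k$: a purely local recursion for the hierarchy is not at hand here, and the viable route is the one you mention second, namely the generating function --- $\Dl(\lm,\ph)=\Dl(-\lm,\cph)=\Dl(\lm,R_\al\ph)$ from Lemma~\ref{F-sol-sym}, hence $\Fz(\lm,\ph)=-\Fz(-\lm,\cph)=\Fz(\lm,R_\al\ph)$, then the expansion \eqref{exp-nls} on finite-gap potentials and analytic continuation --- which is exactly the paper's underlying mechanism.

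Part (ii) has a genuine gap. Your reduction to the scalar relation $\ii g=\partial_x h$ on the diagonal is correct, but the proposed proof of that relation --- ``induction along the recursion $-\ii J\gradz\Hz_{k+1}=K_2\,\gradz\Hz_k$'' --- rests on the Magri identity \eqref{ext-magri}, which in the periodic setting fails generically for $k$ even: for $\ph$ a nontrivial constant one has $\partial\Hz_2=0$ while $\partial\Hz_3\neq0$ and $-\ii J$ is invertible, as the introduction points out. An induction along the recursion must traverse the even-to-odd steps, where the recursion is false, so it cannot close; and the validity of the odd-to-even step on $\{\cph=R_\al\ph\}$ is precisely the statement to be proved, so invoking it is circular. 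An independent derivation of that single step is required. The paper supplies it through the pointwise identity $\partial_x\partial\Dl(\lm)-2\lm R\,\partial\Dl(\lm)=\xi(\lm)J\ph$ of Theorem~\ref{thm:sym-dl}~(v), proved by a direct computation with products of Floquet solutions, together with the antisymmetry $\xi(x,-\lm,\ph)=-\xi(x,\lm,\ph)$ when $\cph=R_\al\ph$; integrating over the contours $\Gm_n$ the correction term cancels between $n$ and $-n$ (Lemma~\ref{sym-grad-In}~(ii)), and summing over $n$ gives $R\,\gradz\Hz_{2m}=\partial_x\gradz\Hz_{2m-1}$, which combined with the $P$-parity from part (i) yields the claim. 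Your eigenspace heuristic --- that the nonlocal part of $K_2$ vanishes on $P$-even gradients --- correctly explains why the formula is clean on the diagonal, but it does not substitute for establishing the recursion at the odd step. (A further caution: carrying the diagonal case to general $\al$ by conjugating with $R_{\al/2}$ produces an extra factor $R_{\pm\al}$ in front of $\partial_x\gradz\Hz_{2m-1}$, so the literal form of (ii) really only pertains to $\al=0$, which is the only case used downstream; this imprecision is shared with the statement itself.)
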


As an application we compare the solution curves of the defocusing \mkdv and the \dnls Hamiltonian vector fields in \emph{Birkhoff coordinates}. The defocusing \mkdv equation admits global Birkhoff coordinates $(\xh_{n},\yh_{n})_{n\ge 1}$ constructed in terms of global action-angle coordinates $(J_{n},\vt_{n})_{n\ge 1}$ -- see \cite{Kappeler:2008fk}. To give a precise definition, we introduce for any $m\ge 0$ the model space $h_{\star}^{m} = \ell_{m+1/2}^{2}(\N)\times\ell_{m+1/2}^{2}(\N)$ with elements $(\xh,\yh) =(\xh_{n},\yh_{n})_{n\ge1}$, where for any $\mathbb{A}\subset\Z$
\[
  \ell_{\al}^{2}(\mathbb{A})
   \defl \setdef[\bigg]{z \in \ell^{2}(\mathbb{A},\R)}
                   {\sum_{n\in \mathbb{A}} (1+\abs{n}^{2\al}) z_{n}^{2} < \infty},
\]
and endow this space with the Poisson structure $\{\xh_{n},\yh_{n}\} = -\{\yh_{n},\xh_{n}\} = 1$ while all other brackets vanish. The \emph{\mkdv Birkhoff map}
\[
  \Oh \colon H^{1}_{r} \to h^{1}_{\star}\times \R,\qquad
  u \mapsto ((\xh_{n},\yh_{n})_{n\ge 1},[u])
\]
defines a bi-real-analytic, canonical diffeomorphism, which transforms every Hamiltonian of the \mkdv hierarchy, on Sobolev spaces of the appropriate order, into \emph{Birkhoff normal form}, that is $\Hh_{m}\circ\Oh^{-1}$ is a real analytic function of the actions $J_{n} = (\xh_{n}^{2}+\yh_{n}^{2})/2$ and the average alone. In these coordinates, the Hamiltonian system with Hamiltonian $\Hh_{m}$ takes the particularly simple form
\[
  \dot \xh_{n} =  \eta_{n,m} \yh_{n},\quad
  \dot \yh_{n} =  -\eta_{n,m} \yh_{n},\quad
  \eta_{n,m} \defl \partial_{J_{n}} \Hh_{m} = \gbr{\Hh_{m},\vt_{n}},
\]
where $\eta_{n,m}$ is called the $n$th frequency of the Hamiltonian $\Hh_{m}$.

For the \dnls equation global Birkhoff coordinates $(\xz_{n},\yz_{n})_{n\in\Z}$ can be constructed in terms of global action-angle coordinates $(I_{n},\th_{n})_{n\in\Z}$ -- see \cite{McKean:1997ka,Grebert:2014iq} and references therein. As the model space we choose the Hilbert space $h_{r}^{m} = \ell_{m}^{2}(\Z)\times \ell_{m}^{2}(\Z)$, $m\ge 0$, with elements $(\xz,\yz)= (\xz_{n},\yz_{n})_{n\in\Z}$, which is endowed with the Poisson structure $\nbr{x_{n},y_{n}} = -\nbr{y_{n},x_{n}} = -1$\footnote{Since we closely follow \cite{Kappeler:2008fk} for the \mkdv and \cite{Grebert:2014iq} for the \nls normal form, respectively, we did not change the signs of the Poisson brackets on the model spaces, hence they are opposite.}.
The \dnls Birkhoff map
\[
  \Oz \colon \Hc_{r}^{0} \to h_{r}^{0},\quad \ph \mapsto (\xz_{n},\yz_{n})_{n\in\Z},
\]
defines a bi-real-analytic, canonical diffeomorphism, which transforms every Hamiltonian of the \nls hierarchy, on Sobolev spaces of the appropriate order, into Birkhoff normal form, that is $\Hz_{m}\circ\Oz^{-1}$ is a real analytic function of the actions $I_{n} = (x_{n}^{2}+y_{n}^{2})/2$ alone. In these coordinates, the Hamiltonian system with Hamiltonian $\Hz_{m}$ is given by
\[
  \dot \xz_{n} =  -\om_{n,m} \yz_{n},\quad
  \dot \yz_{n} =   \om_{n,m} \xz_{n},\quad
  \om_{n,m} \defl \partial_{I_{n}} \Hz_{m} = \nbr{\th_{n},\Hz_{m}},
\]
where $\om_{n,m}$ is called the $n$th frequency of $\Hz_{m}$.

We obtain the following relation of the frequencies of the two hierarchies in Birkhoff coordinates.

\begin{thm}
\label{thm:freq}
For any $n\ge 1$ and $m\ge 1$, we have on $H_{r}^{m-1}$,
\[
  (-1)^m \om_{ n,2m}^{\rest} = \eta_{n,m}.\fish
\]
\end{thm}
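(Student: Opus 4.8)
The plan is to pull both frequencies back to the Birkhoff coordinates and let Theorem~\ref{thm:ham-mkdv-nls} do the dynamical work. Recall that $\eta_{n,m}$ and $\om_{n,2m}$ are the rotation speeds of the angle variables along the respective flows: under the flow of $\Hh_m$ one has $\dot\vt_n=-\eta_{n,m}$, and under the flow of $\Hz_{2m}$ one has $\dot\th_n=\om_{n,2m}$. By Theorem~\ref{thm:ham-mkdv-nls} the field $X_{\Hz_{2m}}$ is tangent to the real diagonal $E_r^m$ and, under $u\mapsto(u,u)$, equals the \mkdv field $Y_{\Hh_m}$; hence the \nls flow of $\Hz_{2m}$, restricted to $H_r^{m-1}$, is exactly the \mkdv flow of $\Hh_m$. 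The two frequencies are thus two readings of one and the same orbit in two different action--angle charts, and the whole problem is to compare those charts on the diagonal.

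Making this quantitative, I would first record the identities
\[
  \om_{n,2m}^{\rest}=\gbr{\th_n^{\rest},\Hh_m},\qquad
  \eta_{n,m}=-\gbr{\vt_n,\Hh_m},
\]
where $\th_n^{\rest}$ is the restriction of the $n$th \nls angle to the diagonal and both brackets are the Gardner bracket~\eqref{mkdv-poi} generating the common flow. This reduces everything to understanding $\th_n^{\rest}$ and the restricted actions $I_n^{\rest}$ in terms of the \mkdv data $(\vt_n,J_n)$. For the actions I would differentiate the scalar identity $\Hz_{2m-1}^{\rest}=2\Hh_m$ (which one checks directly, e.g.\ $\Hz_1^{\rest}=\int_{\T}u^2=2\Hh_1$ and $\Hz_3^{\rest}=\int_{\T}(u_x^2+u^4)=2\Hh_2$): together with an action dictionary $J_n=\kappa_n I_n^{\rest}$ on the diagonal this expresses $\eta_{n,m}$ through the restricted frequency of the \emph{odd} Hamiltonian $\Hz_{2m-1}$, and Theorem~\ref{thm:nls-sym}(ii) then transports the computation to the \emph{even} Hamiltonian $\Hz_{2m}$ that actually appears in the statement.

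To fix the dictionary I would use the symmetries of Theorem~\ref{thm:nls-sym}(i). The diagonal is the fixed-point set of $P$, and the $R_\al$-equivariance of the \nls gradients controls the gauge direction; together with equivariance of the \nls Birkhoff map under $P$ (which acts on the coordinates essentially as the reflection $n\mapsto-n$) this forces $I_n^{\rest}=I_{-n}^{\rest}$, the antisymmetry $\om_{-n,2m}^{\rest}=-\om_{n,2m}^{\rest}$ (consistent with $\Hz_{2m}^{\rest}=0$) and the symmetry $\om_{-n,2m-1}^{\rest}=\om_{n,2m-1}^{\rest}$ of the odd member. This is precisely the folding of the pair of \nls modes $\pm n$ onto the single \mkdv mode $n\ge 1$, and it is what lets one pass from $\partial_{I_n}$ on the full space to $\partial_{J_n}$ on the diagonal.

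The main obstacle is the last sign. A fixed canonical identification of the two charts can only relate $\om_{n,2m}^{\rest}$ and $\eta_{n,m}$ by an $m$-independent constant, so the factor $(-1)^m$ cannot be read off from the dynamics alone: it has to be produced by the normalization conventions. Concretely I expect it to come from the combination of (a) the opposite signs of the Poisson structures on $h_{\star}^m$ and $h_r^m$, (b) the normalization $(-\ii)^{k+1}$ of the \nls hierarchy, which enters $\Hz_{2m}$ with a factor alternating in $m$, and (c) the even/odd shift in Theorem~\ref{thm:nls-sym}(ii), i.e.\ the fact that $\Hz_{2m}$ generates the flow while the neighbouring $\Hz_{2m-1}$ carries the Hamiltonian value $2\Hh_m$. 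Carefully propagating these three factors through the computation of the second paragraph is where I expect the real work to lie; the dynamical identification itself is immediate from Theorem~\ref{thm:ham-mkdv-nls}.
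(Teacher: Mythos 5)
Your dynamical reduction coincides with the paper's: by Theorem~\ref{thm:ham-mkdv-nls} the restriction of the flow of $X_{\Hz_{2m}}$ to the diagonal \emph{is} the flow of $Y_{\Hh_{m}}$, so both frequencies are read off from one and the same orbit, and the theorem reduces to showing that $\vt_{n}+\th_{n}^{\rest}$ is constant along that orbit, i.e. $\gbr{\vt_{n}+\th_{n}^{\rest},\Hh_{m}}=0$. The paper obtains this from Theorem~\ref{thm:bnf}(iii), namely $\gbr{\th_{n}^{\rest},J_{k}}=\dl_{nk}$, which says exactly that $\vt_{n}+\th_{n}^{\rest}$ is a function of the conserved actions alone; once that is available the proof is three lines. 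You correctly identify this chart comparison as ``where the whole problem is,'' but the argument you substitute for it does not close.

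The concrete gap is the ``action dictionary $J_{n}=\kappa_{n}I_{n}^{\rest}$.'' This is false: by Lemma~\ref{comp-In-Jn} the correct relation is $2J_{n}=I_{n,0}^{\rest}$, i.e. the mKdV action matches the NLS action on \emph{level zero}, not the standard action $I_{n}=I_{n,1}$; the latter is only a nonlinear function of all the $(J_{k})_{k\ge1}$ and the average (Theorem~\ref{thm:bnf}(ii)). This level shift is forced by the extra $\partial_{x}$ in the Gardner bracket --- the same mismatch that makes the Hamiltonian identity carry index $2m-1$ while the vector-field identity carries index $2m$. Consequently your plan of differentiating $\Hz_{2m-1}^{\rest}=2\Hh_{m}$ yields $\sum_{k}\om_{k,2m-1}^{\rest}\,\partial_{J_{n}}I_{k}^{\rest}=2\eta_{n,m}$ with a full, non-diagonal Jacobian $\partial_{J_{n}}I_{k}^{\rest}$ you have no control over; and passing from $\Hz_{2m-1}$ to $\Hz_{2m}$ via Theorem~\ref{thm:nls-sym}(ii) is a vector-field identity, so exploiting it requires pairing against $\gradz\th_{n}$ --- which is precisely the angle comparison you lack. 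The sign $(-1)^{m}$ is likewise left as an expectation rather than derived. The missing ingredient is a proof of $\gbr{\th_{n}^{\rest},J_{k}}=\dl_{nk}$; in the paper this is Theorem~\ref{thm:bnf}(iii), obtained from the gradient symmetry of Lemma~\ref{sym-grad-In}(ii) together with $I_{n,0}^{\rest}=2J_{n}$, and it is not recoverable from the $P$- and $R_{\al}$-symmetries of the Hamiltonians alone.
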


\begin{rem}
Note that $\eta_{n,m} = \omk_{n,m-1}\circ B$ for any $n\ge 1$ and $m\ge 1$. Moreover, the symmetry $\om_{-n,2m}^{\rest} = -\om_{ n,2m}^{\rest}$ for any $n\ge 1$ and $m\ge 1$ was obtained in \cite{Grebert:2002wb}.
\end{rem}

Theorem~\ref{thm:freq} follows  from Theorem~\ref{thm:ham-mkdv-nls} and the following  relation of the Birkhoff coordinates of the \mkdv and \dnls equations.

\begin{thm}
\label{thm:bnf}
On $H_{r}^{1}$,

\begin{equivenum}
\item $I_{0}^{\rest}$ vanishes if and only if the average $[u]$ is zero, and for any $n\ge 1$, $I_{n}^{\rest}$ vanishes if and only if $J_{n}$ is zero. (Note that $I_{-n}^{\rest} = I_{n}^{\rest}$ for any $n\ge 1$.)

\item
Each $I_{n}^{\rest}$, $n\in\Z$, is a real analytic function of the actions $(J_{m})_{m\ge 1}$ and the average alone. Conversely, the average and each $J_{n}$, $n\ge 1$, are real analytic functions of the actions $(I_{n})_{n\in\Z}$ alone.

\item For any $n,m\ge 1$, one has $\gbr{\th_{m}^{\rest},J_{n}} = \dl_{m,n}$.
%\[
%  \gbr{\th_{m}^{\rest},J_{n}} = \dl_{m,n},\quad n,m\ge 1.
%\]
In particular, since $\gbr{\vt_{m},J_{n}} = -\dl_{mn}$, it follows that $\vt_{m} + \th_{m}^{\rest}$ is a function of the actions $(J_{n})_{n\ge 1}$ and the average alone.\fish
\end{equivenum}
\end{thm}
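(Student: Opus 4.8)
The plan is to reduce all three parts to a single identity relating the Gardner and \nls Poisson structures along the diagonal, and then to feed in the spectral description of the two Birkhoff maps exactly where the abstract argument runs out. Writing the diagonal embedding as $u\mapsto(u,u)$, so that $F^\rest(u)=F(u,u)$ and $\gradh F^\rest=(\gradz_{\phm}F+\gradz_{\php}F)|_{(u,u)}$ by the chain rule, I would first combine this with Theorem~\ref{thm:ham-mkdv-nls}, which gives $X_{\Hz_{2m}}|_{(u,u)}=(Y_{\Hh_m},Y_{\Hh_m})$, to obtain for every $C^1$-functional $F$ and every $m\ge1$ the bracket identity $\nbr{F,\Hz_{2m}}^\rest=\gbr{F^\rest,\Hh_m}$. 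Indeed $\nbr{F,\Hz_{2m}}^\rest(u)=\langle\gradz F(u,u),X_{\Hz_{2m}}(u,u)\rangle=\int_{\T}(\gradz_{\phm}F+\gradz_{\php}F)\,Y_{\Hh_m}\dx=\gbr{F^\rest,\Hh_m}(u)$. This identity is the engine for everything below; taking $F=\th_k$ it already yields $\gbr{\th_k^\rest,\Hh_m}=\om_{k,2m}^\rest$, which is the bridge from Theorem~\ref{thm:bnf} to Theorem~\ref{thm:freq}.

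Next I would harvest the structural consequences. Applying the engine with $F=I_n$ and using that each action is an integral of the whole \nls hierarchy, so that $\nbr{I_n,\Hz_{2m}}\equiv0$, gives $\gbr{I_n^\rest,\Hh_m}=0$ for all $m$. Invoking the non-degeneracy of the \mkdv frequencies (the sequences $(\eta_{j,m})_j$, $m\ge1$, admit no nontrivial common annihilator, a property built into the normal-form theory of \cite{Kappeler:2008fk}), any function Poisson-commuting with every $\Hh_m$ must be independent of the angles $\vt_j$, hence a function of the actions $(J_j)_{j\ge1}$ and the average $[u]$ alone; this is the forward direction of (ii). The same non-degeneracy, applied to the flow-covariance of $\th_k^\rest$ under the $\Hh_m$ flows, shows that the bracket $\gbr{\th_k^\rest,J_n}$, call it $c_{kn}$, is independent of the angles, and single-valuedness of $\th_k^\rest$ on the Birkhoff tori forces $c_{kn}\in\Z$; consequently $\th_k^\rest+\sum_n c_{kn}\vt_n$ is a function of the actions and the average. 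For the reflection $I_{-n}^\rest=I_n^\rest$ in (i), I would use Theorem~\ref{thm:nls-sym}(i), namely $X_{\Hz_\ell}(P\ph)=(-1)^\ell P X_{\Hz_\ell}(\ph)$: since $P$ preserves the \nls hierarchy and $P(u,u)=(u,u)$, the induced symmetry $I_n\circ P=I_{-n}$ of the \nls Birkhoff map (see \cite{Grebert:2002wb}) restricts on the diagonal to $I_{-n}^\rest=I_n^\rest$.

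What remains — the precise vanishing correspondence in (i), the invertibility needed for the converse in (ii), and the identification $c_{kn}=\dl_{kn}$ in (iii) — cannot be read off from the flows, and this is where I expect the real work. The actions $I_n$ and $J_n$ are gap data of the Zakharov–Shabat operator $L(\ph)$ and of the \mkdv (AKNS) operator respectively, each vanishing exactly when its periodic gap closes. Along the diagonal the operator $L(u,u)$ inherits the $P$-symmetry: its periodic spectrum becomes symmetric under $\lambda\mapsto-\lambda$ (the spectral shadow of $I_{-n}^\rest=I_n^\rest$), its $n$-th gap coincides with the $n$-th \mkdv gap, and its zeroth gap encodes the average $[u]$. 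This yields the equivalences $I_n^\rest=0\Leftrightarrow J_n=0$ and $I_0^\rest=0\Leftrightarrow[u]=0$, and, through the real-analytic dependence of the gap lengths, the real-analytic invertibility of $(J_\bullet,[u])\mapsto(I_\bullet^\rest)$, which completes (ii).

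Finally, to pin the integers $c_{kn}$ I would restrict to the one-gap locus $\{J_k=0:k\ne n\}$, on which only the modes $\pm n$ are active and the \mkdv solutions — hence the corresponding \nls spectral data — are explicit; comparing the winding of $\th_n^\rest$ with that of $\vt_n$ along a closed orbit gives $c_{nn}=1$ and $c_{kn}=0$ for $k\ne n$, that is $c_{kn}=\dl_{kn}$, which is (iii). The ``in particular'' assertion is then immediate from $\gbr{\vt_m,J_n}=-\dl_{mn}$. The main obstacle is precisely this spectral correspondence on the diagonal: relating the full periodic and Dirichlet data of $L(u,u)$ to the \mkdv data of $u$ — equivalently, routing through the Miura map to the \kdv Hill operator — and checking that the gap labelling and normalizations match, so that the abstract integer matrix $(c_{kn})$ is forced to be the identity rather than merely a permutation.
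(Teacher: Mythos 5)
Your part (i) is essentially the paper's argument (the discriminant identity $\Dh(\lm^{2},u)=\Dz(\lm,\ph_{u})$ identifies the gaps, so $I_{n}^{\rest}=0\iff\gm_{n}(\ph_{u})=0\iff\dl_{n}(u)=0\iff J_{n}=0$, and $I_{0}^{\rest}=0\iff\Dh(0,u)=2\cosh([u])=2$), and your ``engine'' identity $\nbr{F,\Hz_{2m}}^{\rest}=\gbr{F^{\rest},\Hh_{m}}$ is correct. But the two places you yourself flag as ``the real work'' contain genuine gaps. For (ii), commuting $I_{n}^{\rest}$ with all the $\Hh_{m}$ only kills the angle dependence if the frequency vectors $(\eta_{j,m})_{j}$, $m\ge1$, have no common annihilator at (a dense set of) every point; that is a nontrivial non-degeneracy statement which is not simply ``built into'' \cite{Kappeler:2008fk}, and it is avoidable: the paper instead shows $\gbr{I_{n}^{\rest},J_{m}}=0$ directly, writing both gradients as contour integrals of $\gradh\Dh$ and using $\gbr{\Dh(\mu_{1}),\Dh(\mu_{2})}=0$, which is literally $\partial_{\vt_{m}}I_{n}^{\rest}=0$. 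The converse half of (ii) is not proved at all by ``real-analytic invertibility of $(J_{\bullet},[u])\mapsto(I_{\bullet}^{\rest})$'': in infinite dimensions that does not follow from the forward direction. The paper's device is to extend $J_{m}$ off the diagonal as $\tfrac12 I_{m,0}$ (Lemma~\ref{comp-In-Jn}) and show $\nbr{I_{m,0},I_{n}}=0$ from $\nbr{\Dz(z),\Dz(w)}=0$, so that $J_{m}\circ\Oz^{-1}$ is itself a function of the $(I_{n})$; the average is handled by the explicit formula $[u]=\cosh^{-1}(\Dz(0,\ph_{u})/2)$.

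For (iii) your strategy has a concrete flaw: on the one-gap locus $\{J_{k}=0:k\ne n\}$ the angle $\th_{k}^{\rest}$ is \emph{undefined} for $k\ne n$ (precisely because $I_{k}^{\rest}=0$ there by part (i)), so that locus can only yield $c_{nn}$, never the off-diagonal entries $c_{kn}$; you would need multi-gap potentials or a limiting argument, and in addition both the angle-independence and the integrality of $c_{kn}$ are asserted rather than proved (periodicity of the $J_{n}$-flow only constrains the \emph{average} of $c_{kn}$ over an orbit). The paper's proof is a direct computation with no normalization matching: Lemma~\ref{sym-grad-In}~(ii) at $k=1$, valid at the symmetric potential $\ph_{u}$, gives
\begin{equation*}
  -\ii J\bigl(\gradz I_{n}-\gradz I_{-n}\bigr)\big|_{\ph_{u}}
  =\tfrac12\,\partial_{x}\gradh_{u}I_{n,0}^{\rest}\,(1,1)^{\top},
\end{equation*}
whence $\nbr{f,I_{n}-I_{-n}}|_{\ph_{u}}=\tfrac12\gbr{f^{\rest},I_{n,0}^{\rest}}=\gbr{f^{\rest},J_{n}}$ for \emph{every} functional $f$; taking $f=\th_{m}$ and using the canonical relations of the \nls Birkhoff map gives $\gbr{\th_{m}^{\rest},J_{n}}=\nbr{\th_{m},I_{n}-I_{-n}}=\dl_{mn}$ at once. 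I recommend replacing your winding-number computation by this bracket identity, and replacing the frequency non-degeneracy input in (ii) by the commutation of the discriminants.
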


\textit{Method of proof.}
The $m$th Hamiltonian $\Hz_{m}$ of the \nls hierarchy and its gradient $\partial \Hz_{m}$ satisfy on $\Hc_{r}^{m-1}$, $m\ge 1$, the trace formulae
\[
  \frac{1}{2^{m-1}}S_{m} = \sum_{m\in\Z} I_{n,m},
  \qquad
  \frac{1}{2^{m-1}}\partial S_{m} = \sum_{m\in\Z} \partial I_{n,m},
\]
where $I_{n,m}$ denotes the \nls action on level $m$ which were introduced by McKean \& Vaninsky~\cite{McKean:1997ka}. The actions are defined in terms of spectral data of the \emph{Zakharov-Shabat operator}
\[
  \Lz(\ph) = \mat[\bigg]{\ii & 0\\ 0 & -\ii}\partial_{x} + \mat[\bigg]{0 & \php\\\phm & 0},
\]
which arises in the Lax-pair formulation of \nls. More to the point, they are defined as functions of the discriminant $\Dz(\lm,\ph)$ of the fundamental solution associated to $L(\ph)$. We prove several symmetries of $\Dz$ and its gradient $\partial\Dz$ under the transformations $\ph\mapsto P\ph$ and $\ph\mapsto R_{\al}\ph$, $\al\in\R$, from which we obtain corresponding symmetries of the actions $I_{n,m}$, $n\in\Z$, $m\ge 1$, and their gradient $\partial I_{n,m}$. This establishes Theorem~\ref{thm:nls-sym}.

In the Lax-pair formulation of \mkdv there arises the \emph{Hill operator}
\[
  \Lh(u) = -\partial_{x}^{2} + B(u),
\]
with the potential $B(u)$ given by the Miura map $B(u) = u_{x} + u^{2}$.
Theorem~\ref{thm:ham-mkdv-nls} now follows from an identification of the spectra of the operators $\Lz(u,u)$ and $\Lh(u)$.
It turns out that for any solution $f=(f_{1},f_{2})$ of $\Lz(u,u)f = \lm f$, the function $g = f_{1} + \ii f_{2}$ is a solution of $\Lh(u)g = \lm^{2}g$. This implies that the Floquet matrix $\gMz(\lm)$ of $\Lz(u,u)$ is conjugated to the Floquet matrix $\gMh(\mu)$ with $\mu = \lm^{2}$ -- see \cite{Chodos:1980du}. Hence the discriminants $\Dz(\lm)$ and $\Dh(\mu)$ coincide at $\mu = \lm^{2}$.
The Hamiltonian hierarchies can be obtained from the asymptotic expansions of the corresponding discriminants, which gives for any $m\ge 1$ the identities
\[
  \frac{1}{2}\Hz_{2m-1}^{\rest} = \Hh_{m}, \qquad \Hz_{2m}^{\rest} = 0.
\]
Since by Theorem~\ref{thm:nls-sym} (applied for $\ph = (u,u)$) we have
\[
  X_{\Hz_{2m}}^{\rest} = (Y_{\Hz_{2m-1}^{\rest}},Y_{\Hz_{2m-1}^{\rest}}),
\]
Theorem~\ref{thm:ham-mkdv-nls} follows immediately.

The construction of the Birkhoff coordinates is based on the one of action-angle variables. Since the action variables can be obtained from spectral data of the operators $\Lh(u)$ and $\Lz(\ph)$, respectively, the observed relation of the discriminants for $\ph = (u,u)$ allow us to derive Theorem~\ref{thm:bnf}.

\emph{Related work.}
Chodos observed in~\cite{Chodos:1980du} that the Floquet matrix $\grave{\Mz}(\lm)$ of $\Lz(u,u)$ is conjugated to the Floquet matrix $\grave{\Mh}(\mu)$ of $\Lh(u)$ with $\mu = \lm^{2}$. He uses this to obtain the identity
\begin{equation}
  \label{chod}
  \Hh_{m} = \frac{1}{2}\Hz_{2m-1}^{\rest},\qquad m\ge 0,
\end{equation}
on the Sobolev spaces of the appropriate order, by realizing the \nls and \mkdv Hamiltonians as traces of certain powers of the operators $\Lz(u,u)$ and $\Lh(u)$, respectively. His approach, however, seems not to be suited to compare the Hamiltonian vector fields of the \nls and the \mkdv hierarchies, which is necessary to identify the PDEs in the \mkdv hierarchy as subsystems of the \nls hierarchy. Note that Theorem~\ref{thm:ham-mkdv-nls} does not follow immediately from \eqref{chod} by differentiation. Indeed, the indices of the identity $\Hh_{m} = \frac{1}{2}\Hz_{2m-1}^{\rest}$ for the Hamiltonians themselves are different from the indices of the identity $Y_{\Hh_{m}} = X_{\Hz_{2m}}^{\rest}$ for the Hamiltonian vector fields. This is due to the fact that the Poisson structure~\eqref{mkdv-poi} of \mkdv involves an additional derivative $\partial_{x}$ in comparison to the Poisson structure~\eqref{nls-poi} of \nls.

\citet{Dickey:2003tm} shows several algebraic relations of the NLS hierarchy and derives the mKdV hierarchy from the former by the method of Drinfeld-Sokolev reduction. However, the obtained relations are implicit in contrast to the explicit formulas given in Theorem~\ref{thm:ham-mkdv-nls} \& \ref{thm:nls-sym}.

Item (ii) of Theorem~\ref{thm:nls-sym} has been obtained by Magri~\cite{Magri:1978gh} in the case of a $C_{0}^{\infty}$-potential on $[0,1]$.
In this case, the NLS system~\eqref{nls-sys} can be written in Bi-Hamiltonian form
\begin{equation}
  \label{bi-ham}
  \partial_{t} \ph = K \partial S_{3},\qquad\text{and}\qquad \partial_{t} \ph = K_{2} \partial S_{2}.
\end{equation}
Here, $K = -\ii J$ denotes the standard Poisson structure and $K_{2}$ denotes the second Poisson structure
\[
  K_{2}f = \partial_{x}Pf - \ii R\ph\p*{\int_{0}^{x} (f_{1}\ph_{1}-f_{2}\ph_{2})\,\dx 
  + \int_{1}^{x} (f_{1}\ph_{1}-f_{2}\ph_{2})\,\dx},
\]
with Poisson bracket $\{F,G\}_{2} = \int_{\T} \partial F K_{2} \partial G\,\dx$. Both Poisson structures are compatible on $C_{0}^{\infty}$ in the sense that
\[
  \{F,G\}_{\lm} \defl \{F,G\} - \lm \{F,G\}_{2}
\]
is a Poisson bracket for any real $\lm$, which due to the nonlinear nature of the Jacobi identity is a nontrivial constraint. The second Poisson structure $K_{2}$ is non-constant. Furthermore, one obtains the following more general version of Theorem~\ref{thm:nls-sym} (ii)
\begin{equation}
  \label{ext-magri}
  K\partial \Hz_{m+1} = K_{2} \partial \Hz_{m},\qquad m\ge 1.
\end{equation}
Note that one has $K_{2}\big|_{\Ec_{c}} = \partial_{x}$, hence item (ii) indeed follows from \eqref{ext-magri}.

However, we point out that the condition $\ph\in C_{0}^{\infty}[0,1]$ is neither dynamically invariant for the NLS system~\eqref{nls-sys} nor for the mKdV system~\eqref{mkdv-sys}. 
Moreover, in the case of periodic boundary conditions, one verifies using the case that $\ph$ is a nontrivial constant, that the NLS system~\eqref{nls-sys} is not Bi-Hamiltonian in the sense of Magri. Indeed, the identity
\[
  K\partial \Hz_{3} = K_{\star}\partial \Hz_{2}
\]
does not hold for any linear operator $K_{\star}$, since for this choice of the potential $\partial S_{2}$ vanishes, $\partial S_{3}$ does not, and $K$ is invertible. In fact, one infers that~\eqref{ext-magri} generically does not hold for $m$ even. However, one can restrict $K\partial \Hz_{m+1}$ and $K_{2} \partial \Hz_{m}$ to the invariant subspaces
\[
  \Mc_{-} = \setd{\ph(1-x) = -\ph(x)},\qquad
  \Mc_{+} = \setd{\ph(1-x) = \ph(x)},
\]
where all odd, respectively even, derivatives of $\ph$ vanish on the boundary of $[0,1]$. On these spaces~\eqref{ext-magri} holds for $m$ odd -- see Section~\ref{s:fin}.

Furthermore, Theorem~\ref{thm:nls-sym} and Theorem~\ref{thm:freq} are related to \cite{Grebert:2002wb}. In particular, the identities $I_{-n}^{\rest} = I_{n}^{\rest}$ and $\th_{-n}^{\rest} = - \th_{n}^{\rest}$ for any $n\ge 1$ are proved there implying $\om_{-n,2m}^{\rest} = -\om_{n,2m}^{\rest}$.

Finally, we mention the work of  \citet{Zakharov:1986hq}, where multiscale expansions are proposed to discover relations between various integrable PDEs.

\emph{Organization of this paper.} In Section~\ref{s:setup} the \mkdv and \nls action variables as well as the spectral data needed to define them are introduced. In Section~\ref{s:zs-hm-spec} the discriminants of \mkdv and \nls are compared and Theorem~\ref{thm:bnf} (i)-(ii) are proven. In Section~\ref{s:symmetries} the symmetries of the Hamiltonians in the \nls hierarchy under the transformations $\ph\mapsto P\ph$ and $\ph\mapsto R_{\al}\ph$, $\al\in\R$, are obtained and subsequently used in Section~\ref{s:fin} to prove Theorem~\ref{thm:ham-mkdv-nls}, Theorem~\ref{thm:nls-sym}, Theorem~\ref{thm:freq}, and Theorem~\ref{thm:bnf} (iii).

\emph{Acknowledgments.}
We are very grateful to Herbert Koch for valuable discussions and to Thomas Kappeler for his  continued support and helpful feedback on this manuscript.
This work was partially supported by the Swiss Science Foundation.

% ===================================================================================================
\section{Setup}
\label{s:setup}

We begin by briefly recalling the definition of the \mkdv and \nls action variables as well as the properties of the spectral data needed to define them -- see e.g. \cite{Kappeler:2003up,Kappeler:2008fk,Grebert:2014iq}.

\textit{\mkdv action variables.} The Miura transform \cite{Miura:1968uq}
\[
  H^{1}_{c} \to H^{0}_{c},\qquad u \mapsto  B(u) = u_{x} + u^{2},
\]
when restricted to $H_{r}^{1}$ where \mkdv is well-posed, maps solution of the defocusing \mkdv equation onto solutions of the \kdv equation. This allows us to use the setup for $\kdv$ as in \cite{Kappeler:2003up} and pull back all defined objects using the Miura transform -- see also \cite{Kappeler:2008fk}. For a \emph{potential} $u\in H_{c}^1$ consider the \emph{Hill operator}
\[
  \Lh(u)=-\partial_{x}^{2}+B(u)
\]
on the interval $[0,2]$ of twice the length of the period of $u$ with periodic boundary conditions. By a slight abuse of notation, the spectrum of $\Lh(u)$ is called the \emph{periodic spectrum of $u$} and is denoted by $\spec(u)$. It is known to be discrete and to consist of a sequence $\mu_{0}^{+}(u),\mu_{1}^{-}(u),\mu_{1}^{+}(u),\dotsc$ of periodic eigenvalues, which, when counted with their multiplicities, can be ordered lexicographically -- first by their real part and second by their imaginary part -- such that
\[
  \mu_0^{+} \lex \mu_1^- \lex \mu_1^+ \lex \dotsb \lex \mu_n^- \lex \mu_n^+ \lex \dotsb,
  \qquad \mu_{n}^{\pm} = n^{2}\pi^{2} + \ell_{n}^{2}.
\]
Here $\ell_{n}^{2}$ denotes a generic $\ell^{2}$-sequence. For any $n\ge 1$ we define the \emph{gap length}
\[
  \dl_n(u) \defl \mu_{n}^+(u) - \mu_{n}^-(u).
\]
When $u$ is real-valued, the periodic spectrum and the gap lengths are real-valued.

To obtain a suitable characterization of the periodic spectrum, let $y_1(x,\mu,u)$ and $y_2(x,\mu,u)$ be the two standard fundamental solutions of $\Lh y = \mu y$ and denote by $\Dh(\mu,u)=y_1(1,\mu,u)+y_2'(1,\mu,u)$ the associated \emph{discriminant}. The periodic spectrum of $u$ is precisely the zero set of the entire function $\Dh^2(\mu) - 4$, and we have the product representation
\begin{equation}
  \label{Dh-sq}
  \Dh^2(\mu) - 4
   = 
  4(\mu_0^{+}-\mu)\prod_{m\ge 1} \frac{(\mu_m^+-\mu)(\mu_m^--\mu)}{m^4\pi^4}.
\end{equation}
The $\mu$-derivative is denoted by $\dDh\defl\partial_{\mu}\Dh$.

For each potential $u\in H_{r}^{1}$ there exists an open neighborhood $\Vh_{u}$ within $H^{1}_{c}$ such that the straight lines
\[
  \Gh_{0} = \setdef{\mu_{0}^{+}-t}{t\ge 0},\qquad 
  \Gh_{n} = [\mu_{n}^{-},\mu_{n}^{+}],\quad n\ge 1,
\]
are disjoint from each other for every potential in $\Vh_{u}$. Actually, for $\Vh_{u}$ sufficiently small, there exist mutually disjoint neighborhoods $(\Uh_{n})_{n\ge 0}\subset \C$, called \emph{isolating neighborhoods}, such that $\Gh_{n}$ is contained in $\Uh_{n}$ for every $n\ge 0$ and every potential in $\Vh_{u}$, and $\Uh_{n} = \setd{\abs{\mu-n^{2}\pi^{2}} \le \pi/4}$ for $n$ sufficiently large. The union of all $\Vh_{u}$ with $u\in H_{r}^{1}$ is denoted by $\Wh$.

To define the action variables in terms of contour integrals in the complex plane, we introduce the \emph{canonical branch} of the square root of $\Dh^{2}-4$ by stipulating on $H_{r}^{1}$ that
\begin{align}
  \label{c-mkdv}
  \ii\sqrt[c]{\Dh^2(\mu)-4}>0 \;\text{ for }\; \mu\in (\mu_0^{+},\mu_1^-).
\end{align}
This root admits an analytic extension onto $(\C\setminus\bigcup_{n\ge 0} \Uh_{n}) \times \Vh_{u}$ for any $u\in \Wh$ -- see also \cite{Kappeler:2003up}.

To proceed, we define for any $u\in \Wh$ on $(\C\setminus\bigcup_{n\ge 0} \Uh_{n}) \times \Vh_{u}$ the mapping
\begin{equation}
  \label{Fh}
    \Fh(\mu)
   = \int_{\mu_{0}^{+}}^{\mu} \frac{\dDh}{\sqrt[c]{\Dh^{2}-4}}\,\dz,
\end{equation}
where the path of integration is chosen to not intersect any open gap except possibly at its endpoints. This mapping is analytic on $(\C\setminus\bigcup_{n\ge 0} \Uh_{n}) \times \Vh_{u}$,
and locally around $\Gh_{n}$
\begin{align*}
  \Fh(\mu) + \ii n\pi
   &= \cosh^{-1}\left(\frac{\Dh(\mu)}{2}\right)\\
   &\defl \log\frac{(-1)^{n}}{2}\left(\Dh(\mu) + \sqrt[c]{\Dh^{2}(\mu)-4}\right),
\end{align*}
with $\log$ denoting the principal branch of the logarithm.
Moreover, for a finite gap potential $u$, the mapping $\Fh$ has the following asymptotic expansion along $\mu = a_{n}^{2}$ with $a_{n} = (n+1/2)\pi$,
\begin{align}
\label{exp-mkdv}
  \Fh\Big|_{\mu = a_{n}^{2}}
   = -\ii a_{n} + \ii\sum_{1\le k\le N} \frac{2 \Hh_{k}}{(2 a_{n})^{2k-1}}
   + O(a_{n}^{-2N-1}),
   \qquad n \to +\infty.
\end{align}
The $n$th \emph{\mkdv action}, $n\ge 1$, of $u\in \Wh$ is then given by
\begin{equation}
  \label{Jn}
  \quad J_{n}
   \defl 
  \frac{-1}{4\pi}\int_{\Gmh_n} \mu^{-1} \Fh(\mu)\,\dmu,
\end{equation}
where $\Gmh_{n}$ denotes any sufficiently close counter clockwise oriented circuit around $\Gh_{n}$ which does not enclose the origin. The action $J_{n}$ vanishes if and only if the gap length $\dl_{n}$ is zero -- see \cite{Kappeler:2008fk,Molnar:2016hq} for details.

\textit{\nls action variables.} 
For a potential $\ph=(\phm,\php)\in \Hc^0_c$, consider the \emph{Zakharov-Shabat operator}
\[
  \Lz(\ph) \defl
  \mat[\bigg]{\,\ii & \\  & -\ii} 
  \frac{\ddd}{\dx} +
  \mat[\bigg]{ & \phm \\ \php & }
\]
on the interval $[0,2]$ with periodic boundary conditions. By a slight abuse of notation, the spectrum of $\Lz(\ph)$ is called the \emph{periodic spectrum of $\ph$} and is denoted by $\spec(\ph)$. It is known to be discrete and to consist of a sequence of pairs of complex eigenvalues $\lm_n^+(\ph)$ and $\lm_n^-(\ph)$, $n\in\Z$, listed with algebraic multiplicities, such that when ordered lexicographically
\[
  \dotsb \lex \lambda_{n-1}^+ \lex \lambda_{n}^- \lex \lambda_{n}^+ \lex \lambda_{n+1}^- \lex \dotsb,\qquad 
  \lm_n^\pm = n\pi + \ell^2_n.
\]
We also define the \emph{gap lengths}
\[
  \gm_{n}(\ph) \defl \lm_{n}^{+}(\ph)-\lm_{n}^{-}(\ph).
\]

Denote by $\Mz(x,\lm,\ph)$ the standard fundamental solution of $\Lz(\ph)\Mz = \lm \Mz$, and introduce the \emph{discriminant} $\Dz(\lm,\ph) \defl  \operatorname{tr} \Mz(1,\lm,\ph)$.
The periodic spectrum of $\ph$ is precisely the zero set of the entire function $\Dz^2(\lm) - 4$, and we have the product representation
\begin{equation}
  \label{Dz-sq}
  \Dz^2(\lm) - 4
   = 
  -4\prod_{n\in\Z}
  \frac{(\lm_n^+-\lm)(\lm_n^--\lm)}{\pi_n^2},
  \qquad
  \pi_n
   \defl 
  \begin{cases}
  n\pi, & n\neq 0,\\
  1, & n=0.
  \end{cases}
\end{equation}
We also need the $\lm$-derivative $\dDz\defl\partial_{\lm}\Dz$.

For any potential $\ph\in \Hc_{r}^{0}$, there exists an open neighborhood $\Vz_{\ph}$ within $\Hc_{c}^{0}$ for which there exist disjoint closed discs $(\Uz_{n})_{n\in\Z}$ centered on the real axis such that $\Gz_{n} \defl [\lm_{n}^{-},\lm_{n}^{+}]$ is contained in the interior of $\Uz_{n}$ for any potential in $\Vz_{\ph}$ and any $n\in\Z$, and $\Uz_{n} = \setd{\abs{\lm-n\pi} \le \pi/4}$ for $\abs{n}$ sufficiently large.
Such discs are called \emph{isolating neighborhoods}, and we denote the union of all $\Vz_{\ph}$ with $\ph\in \Hc_{r}^{0}$ by $\Wz$.

To define the action variables in terms of contour integrals in the complex plane, we introduce the \emph{canonical root} $\sqrt[c]{\Dz^{2}(\lm)-4}$ by stipulating on $\Hc_{r}^{0}$ that
\begin{equation}
  \label{c-nls}
  \ii \sqrt[c]{\Dz^{2}(\lm)-4} > 0,\qquad \lm_{0}^{+} < \lm < \lm_{1}^{-}.
\end{equation}
This root admits an analytic continuation onto $(\C\setminus \bigcup_{n\in\Z} \Uz_{n})\times \Vz_{\ph}$.

To proceed, we define for any $\ph\in \Wz$  on $(\C\setminus\bigcup_{n\in\Z} \Uz_{n}) \times \Vz_{\ph}$ the mapping
\begin{equation}
  \label{Fz}
    \Fz(\lm) \defl
   \frac{1}{2}\left(\int_{\lm_{0}^{-}}^{\lm} \frac{\dDz(z)}{\sqrt[c]{\Dz^{2}(z)-4}}\,\dz + \int_{\lm_{0}^{+}}^{\lm} \frac{\dDz(z)}{\sqrt[c]{\Dz^{2}(z)-4}}\,\dz\right).
\end{equation}
This map is analytic on $(\C\setminus\bigcup_{n\in\Z} \Uz_{n}) \times \Vz_{\ph}$ with gradient
\begin{equation}
\label{grad-F}
  \gradz \Fz = \frac{\gradz \Dz}{\sqrt[c]{\Dz^{2}-4}}.
\end{equation}
Furthermore, $F(\lm_{0}^{+}) = F(\lm_{0}^{-}) = 0$, and
\[
  F(\lm) = \int_{\lm_{0}^{-}}^{\lm} \frac{\dDz(z)}{\sqrt[c]{\Dz^{2}(z)-4}}\,\dz
         = \int_{\lm_{0}^{+}}^{\lm} \frac{\dDz(z)}{\sqrt[c]{\Dz^{2}(z)-4}}\,\dz.
\]
If $\ph\in\Hc_{r}^{0}$, then locally around $\Gz_{n}$
\[
  \Fz(\lm) + \ii n\pi = \cosh^{-1}\left(\frac{\Dz(\lm)}{2}\right)
  = \log\frac{(-1)^{n}}{2}\left(\Dz(\lm) + \sqrt[c]{\Dz^{2}(\lm)-4}\right).
\]
Moreover, if $\ph$ is a finite gap potential, then there exists $\Lm > 0$ such that
\begin{align}
  \label{exp-nls}
  \Fz(\lm,\ph)
   = -\ii \lm + \ii\sum_{n\ge 1} \frac{\Hz_{n}(\ph)}{(2\lm)^{n}},
   \qquad \abs{\lm} > \Lm.
\end{align}
For $\ph\in W$ the $n$th \emph{\nls action variable}, $n\in\Z$, is given by
\begin{equation}
  \label{In}
  I_n = -\frac{1}{\pi}\int_{\Gmz_n} \Fz(\lm)\,\dlm,
\end{equation}
with $\Gmz_{n}$ being a sufficiently close counter clockwise oriented circuit around $\Gz_{n}$.
The action $I_{n}$ vanishes if and only if the gap length $\gm_{n}$ is zero
 -- see \cite{Grebert:2014iq,Molnar:2014vg} for details.

% ===================================================================================================
\section{Identity for the discriminants}
\label{s:zs-hm-spec}

In this section we establish the following identity relating the discriminants of the Zakharov-Shabat operator with the one of a corresponding Hill operator and discuss several applications.
 
\begin{thm}
\label{comp-dl}
For all $\lm\in\C$ and $u\in H_{c}^{1}$,
\[
  \Dh(\lm^{2},u) = \Dz(\lm,\ph_{u}),
\]
where $\ph_{u} \defl (u,u)$.\fish
\end{thm}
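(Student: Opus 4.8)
The plan is to exploit the solution correspondence already announced in the method of proof: for a solution $f=(f_{1},f_{2})$ of $\Lz(\ph_{u})f=\lm f$ with $\ph_{u}=(u,u)$, the combination $g=f_{1}+\ii f_{2}$ solves $\Lh(u)g=\lm^{2}g$. First I would verify this directly. Writing out the Zakharov--Shabat system for $\ph_{u}=(u,u)$, namely $\ii f_{1}'+u f_{2}=\lm f_{1}$ and $u f_{1}-\ii f_{2}'=\lm f_{2}$, and introducing alongside $g$ the second combination $h=f_{1}-\ii f_{2}$, a one-line computation turns the system into
\[
  g' = u\,g-\ii\lm\,h,\qquad h' = -\ii\lm\,g-u\,h.
\]
Differentiating the first equation and substituting the second then gives $-g''+(u_{x}+u^{2})g=\lm^{2}g$, i.e. $\Lh(u)g=\lm^{2}g$, since $B(u)=u_{x}+u^{2}$. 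Equivalently, the constant matrix $Q=\smat{1 & \ii\\ 1 & -\ii}$ conjugates the coefficient matrix of the $\Lz(\ph_{u})$-system into the coefficient matrix $\smat{u & -\ii\lm\\ -\ii\lm & -u}$ of the $(g,h)$-system.

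Next I would pass from the first-order $(g,h)$-system to the companion (phase-space) form of the Hill equation. Because $g'=u\,g-\ii\lm\,h$, the pointwise linear map $T(x)=\smat{1 & 0\\ u(x) & -\ii\lm}$ sends $(g,h)$ to $(g,g')$, and in these variables the system is exactly the first-order form of $\Lh(u)g=\lm^{2}g$, whose monodromy over one period is by definition the Hill Floquet matrix $\gMh(\lm^{2})$. Composing the two changes of variables, the fundamental solution of the Hill phase-space system equals $T(x)Q$ applied to that of the $\Lz(\ph_{u})$-system. The crucial point is \emph{periodicity}: $u$ has period $1$, so $T(1)=T(0)$, and evaluating at $x=1$ yields $\gMh(\lm^{2})=(T(0)Q)\,\gMz(\lm)\,(T(0)Q)^{-1}$. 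Thus the two Floquet matrices are conjugate (this is Chodos's observation), so in particular they have equal trace, which is precisely the asserted identity $\Dh(\lm^{2},u)=\Dz(\lm,\ph_{u})$.

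The main obstacle is the degeneracy at $\lm=0$, where $T(x)$ becomes singular and the conjugacy argument breaks down. I would dispose of it by analyticity: both $\Dh(\lm^{2},u)$ and $\Dz(\lm,\ph_{u})$ are entire in $\lm$, so the identity established on $\C\setminus\{0\}$ extends to all of $\C$ by continuity. A second point to keep straight is that, although both operators are posed on the interval $[0,2]$ for the purpose of locating the spectrum, the discriminants are traces of the fundamental matrices over a \emph{single} period $[0,1]$ --- this is why $\Dh(\mu)=y_{1}(1,\mu)+y_{2}'(1,\mu)$ and $\Dz(\lm)=\operatorname{tr}\Mz(1,\lm)$ are evaluated at $x=1$ --- so the computation above compares exactly the right objects and the $[0,2]$-doubling plays no role here. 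The remaining verifications, namely the matrix identity $QAQ^{-1}=\smat{u & -\ii\lm\\ -\ii\lm & -u}$ and the differentiation establishing the Hill equation, are routine and valid at the $H^{1}$-regularity provided by the setup, so I would relegate them to a short line of computation.
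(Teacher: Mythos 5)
Your proposal is correct and follows essentially the same route as the paper: the paper's Lemma~\ref{comp-fs} establishes exactly the conjugacy $\Mh(1,\lm^{2},u)=A(0,\lm,u)\Mz(1,\lm,\ph_{u})A(0,\lm,u)^{-1}$ with $A(x,\lm,u)=\smat{1 & \ii\\ u-\ii\lm & \ii u-\lm}$, which is precisely your $T(x)Q$, and the paper likewise disposes of $\lm=0$ by continuity. The only difference is presentational (you factor the conjugating matrix through the intermediate $(g,h)$-variables), so there is nothing to add.
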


The proof of this theorem is based on an observation by Chodos \cite{Chodos:1980du} relating the fundamental solutions of the Hill operator $\Lh(u)$ and the Zakharov-Shabat operator $\Lz(\ph_{u})$.
Given $u\in H_{c}^{1}$ and $\lm\in \C$, define
\[
  A(x,\lm,u) \defl \mat[\bigg]{1 & \ii\\ u-\ii\lm & \ii u - \lm }.
\]
Note that $\det A(x,\lm,u) = -2\lm$, hence $A(x,\lm,u)$ is invertible for any $\lm\neq 0$, $0\le x \le 1$, and $u\in H_{c}^{1}$.
Furthermore, $A(x,\lm,u)$ is $1$-periodic in $x$. Finally, denote by $\Mh$ the fundamental solution of $\Lh$ and by $\Mz$ the one of $\Lz$.

\begin{lem}
\label{comp-fs}
Suppose $u\in H^{1}_{c}$. If for some $f=(f_1,f_2)\in \Hc^{2}_{c}([0,1])$ and $\lm\in \C$,
\[
  \Lz(\ph_{u})f = \lm f,
\]
then
\[
  \Lh(u)(f_{1}+\ii f_{2}) = \lm^{2}(f_{1}+\ii f_{2}).
\]
Moreover, if $\lm\neq 0$, then
\[
 \Mh(x,\lm^{2},u) = 
  A(x,\lm,u)\Mz(x,\lm,\ph_{u})A(0,\lm,u)^{-1},
\]
In particular, at $x=1$,
\[
  \Mh(1,\lm^{2},u) = 
  A(0,\lm,u)\Mz(1,\lm,\ph_{u})A(0,\lm,u)^{-1}.\fish
\]
\end{lem}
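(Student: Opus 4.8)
The plan is to prove the scalar claim by direct differentiation and then promote it to the matrix identity via uniqueness of fundamental solutions of a linear ODE. Writing $\Lz(\ph_{u})f=\lm f$ as a first-order system yields $f_{1}'=-\ii\lm f_{1}+\ii u f_{2}$ and $f_{2}'=-\ii u f_{1}+\ii\lm f_{2}$. Setting $g\defl f_{1}+\ii f_{2}$ and differentiating once, one obtains $g'=(u-\ii\lm)f_{1}+(\ii u-\lm)f_{2}$, so that
\[
  \smat{g\\g'}=A(x,\lm,u)\smat{f_{1}\\f_{2}}.
\]
This is precisely where the matrix $A$ enters: it is the change of variables sending a Zakharov--Shabat solution $(f_{1},f_{2})$ to the Cauchy data $(g,g')$ of the associated scalar function.

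Differentiating a second time and substituting the expressions for $f_{1}'$ and $f_{2}'$, the coefficients of $f_{1}$ and of $f_{2}$ each collapse to $u_{x}+u^{2}-\lm^{2}$, giving $g''=(u_{x}+u^{2}-\lm^{2})g$, that is $\Lh(u)g=-g''+(u_{x}+u^{2})g=\lm^{2}g$. This establishes the first assertion, and the only labor involved is the routine collection of terms in $g''$.

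For the matrix identity I would argue as follows. The columns of $\Mz(x,\lm,\ph_{u})$ solve the Zakharov--Shabat system, so by the scalar result each column of $A(x,\lm,u)\Mz(x,\lm,\ph_{u})$ is of the form $(g,g')$ with $g$ a solution of $\Lh(u)g=\lm^{2}g$; hence the product is a matrix solution of the first-order Hill system at spectral parameter $\lm^{2}$. Since $\det A(x,\lm,u)=-2\lm$ and $\det\Mz\equiv1$ (the Zakharov--Shabat coefficient matrix is trace-free, so its Wronskian is constant), the product is invertible for $\lm\neq0$ and is therefore a fundamental matrix; as $\Mh(x,\lm^{2},u)$ solves the same linear system, the two differ by a constant right factor, which evaluation at $x=0$---where $\Mz(0,\lm,\ph_{u})=\Mh(0,\lm^{2},u)=\mathrm{Id}$---pins to $A(0,\lm,u)$, yielding the stated formula. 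The case $x=1$ then follows immediately from the $1$-periodicity of $A$ in $x$, which gives $A(1,\lm,u)=A(0,\lm,u)$. I do not expect a genuine obstacle here: once $A$ is recognized as the map $(f_{1},f_{2})\mapsto(g,g')$, the scalar identity is forced by differentiation and the matrix identity by uniqueness of solutions to the Hill system, so the only points demanding care are the term cancellations in $g''$ and the nonvanishing and periodicity of $\det A$ that make the constant right factor well defined and $x$-independent.
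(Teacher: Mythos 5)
Your proposal is correct and follows essentially the same route as the paper: identify $A$ as the change of variables $(f_{1},f_{2})\mapsto(g,g')$ with $g=f_{1}+\ii f_{2}$, verify $g''=(u_{x}+u^{2}-\lm^{2})g$ by direct substitution, and conclude the matrix identity from uniqueness of the fundamental solution together with the invertibility and $1$-periodicity of $A$. You merely make explicit the final uniqueness/normalization step that the paper leaves implicit.
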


\begin{proof}
If $f$ is a solution of $\Lz(\ph_{u}) f = \lm f$, then
\[
  \partial_{x} f = \mat[\bigg]{-\ii \lm & \ii u\\ -\ii u & \ii \lm}f,
  \qquad
  \partial_{x}^{2} f = \ii (\partial_{x}u) J f + (-\lm^{2} + u^{2})f,
  \quad
  J = \mat[\bigg]{ & 1\\ -1 & },
\]
and hence
\[
  (-\partial_{x}^{2} + u^{2} + \partial_{x}u)(f_{1}+\ii f_{2}) = \lm^{2}(f_{1}+\ii f_{2}).
\]
Let $\Mz = \smat{m_{1} & m_{2}\\ m_{3} & m_{4}}$ denote the entries of the fundamental solution of $\Lz$, and define
\[
  y_{1} = m_{1} + \ii m_{3},\qquad y_{2} = m_{2} + \ii m_{4},
\]
then by the preceding calculation
\[
  y_{1}' = (u-\ii \lm)m_{1} + (\ii u - \lm) m_{3},\qquad
  y_{2}' = (u-\ii \lm)m_{2} + (\ii u - \lm) m_{4}.
\]
Thus $Y = \left(\begin{smallmatrix}y_{1} & y_{2}\\y_{1}' & y_{2}'\end{smallmatrix}\right)$ is a fundamental solution of $-y'' + B(u)y = \lm y$ with
\[
  Y = A\Mz = \begin{pmatrix}
  1 & \ii \\
  u- \ii\lm & \ii u - \lm
  \end{pmatrix}
  \begin{pmatrix}
  m_{1} & m_{2}\\
  m_{3} & m_{4}
  \end{pmatrix}.
\]
As $A(0,\lm,u)$ is invertible if and only if $\lm\neq 0$, and $u$ is 1-periodic in $x$, the claim follows.\qed
\end{proof}

\begin{proof}[Proof of Theorem~\ref{comp-dl}.]
By Lemma~\ref{comp-fs}, the fundamental solutions ${\Mh}(1,\lm^{2},u)$ and ${\Mz}(1,\lm,\ph_{u})$, evaluated at $x=1$,  are conjugated for $\lm\in \C\setminus\setd{0}$. Thus their discriminants coincide, $\Dh(\lm^{2},u) = \Dz(\lm,\ph_{u})$. By continuity this identity also holds for $\lm = 0$.\qed
\end{proof}

Consequently, as already noted in \cite{Grebert:2002wb}, the discriminant $\Dz(\lm,\ph_{u})$ for $u\in H^{1}_{c}$ is an even function of $\lm$. Recall from~\eqref{Dz-sq} that the periodic spectrum of $\ph_{u}$ is precisely the zero set of $\Dz^{2}(\lm,\ph_{u})-4$. Thus, it follows from the asymptotic behavior $\lm_{n}^{\pm} = n\pi + \ell_{n}^{2}$ and the lexicographical ordering that
\begin{align}
  \label{eq:spectralsymmetrie}
  \lm_{-n}^{\mp}(\ph_{u}) = -\lm_{n}^{\pm}(\ph_{u}),\qquad n\ge 0.
\end{align}
Further symmetries of the discriminant will be obtained in Section~\ref{s:symmetries}.

\begin{lem}
\label{comp-spec}
For every $u\in H_{r}^{1}$,
\[
  \mu_{0}^{+}(u) = (\lm_{0}^{+}(\ph_{u}))^{2} = (\lm_{0}^{-}(\ph_{u}))^{2},\qquad
  \mu_{n}^{\pm}(u) = (\lm_{n}^{\pm}(\ph_{u}))^{2},\quad n\ge 1,
\]
and $\mu_{n}^{\pm}(u)$ has the same geometric multiplicity as $\lm_{n}^{\pm}(\ph_{u})$.
In particular, $\Gz_{n}^{2}(\ph_{u}) = \Gh_{n}(u)$ for any $n\ge 1$, and $\dl_{n}(u) = 0$ iff $\gm_{n}(\ph_{u}) = 0$.\fish
\end{lem}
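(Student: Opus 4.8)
The plan is to deduce everything from Theorem~\ref{comp-dl}, the spectral symmetry \eqref{eq:spectralsymmetrie}, and the conjugation of monodromy matrices furnished by Lemma~\ref{comp-fs}. First I would square the discriminant identity of Theorem~\ref{comp-dl} to obtain, as entire functions of $\lm$,
\[
  \Dh^{2}(\lm^{2},u) - 4 = \Dz^{2}(\lm,\ph_{u}) - 4 .
\]
By the product representations \eqref{Dz-sq} and \eqref{Dh-sq}, the right-hand side vanishes exactly on the \nls periodic spectrum $\{\lm_{n}^{\pm}(\ph_{u})\}_{n\in\Z}$, while the left-hand side vanishes exactly where $\lm^{2}$ lies in the \mkdv periodic spectrum $\{\mu_{0}^{+}(u)\}\cup\{\mu_{n}^{\pm}(u)\}_{n\ge 1}$. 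Matching zeros with multiplicities, the squaring map $\lm\mapsto\lm^{2}$ therefore carries the multiset $\{\lm_{n}^{\pm}(\ph_{u})\}_{n\in\Z}$ onto the multiset $\{\mu_{0}^{+}(u)\}\cup\{\mu_{n}^{\pm}(u)\}_{n\ge 1}$.

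To fix the labelling I would use that $\ph_{u}=(u,u)\in\Hc_{r}^{1}$ for real $u$, so that $\spec(\ph_{u})$ is real and the lexicographic order is the usual order, together with the symmetry \eqref{eq:spectralsymmetrie}, $\lm_{-n}^{\mp}(\ph_{u}) = -\lm_{n}^{\pm}(\ph_{u})$. The latter shows that the pair $\{\lm_{n}^{\pm},\lm_{-n}^{\mp}\}=\{\pm\lm_{n}^{\pm}\}$ collapses under squaring to the single value $(\lm_{n}^{\pm})^{2}$. Combining this with the asymptotics $\lm_{n}^{\pm} = n\pi + \ell_{n}^{2}$ and $\mu_{n}^{\pm} = n^{2}\pi^{2} + \ell_{n}^{2}$ gives $(\lm_{n}^{\pm})^{2} = n^{2}\pi^{2}+\ell_{n}^{2}$, which matches the asymptotics of $\mu_{n}^{\pm}$; since for $n\ge 1$ the relevant eigenvalues cluster near $n\pi>0$ and squaring is order-preserving on the positive reals, the orderings are respected and one reads off $\mu_{n}^{\pm}(u) = (\lm_{n}^{\pm}(\ph_{u}))^{2}$ for $n\ge 1$, while $\mu_{0}^{+}(u) = (\lm_{0}^{+}(\ph_{u}))^{2} = (\lm_{0}^{-}(\ph_{u}))^{2}$, the last equality being $\lm_{0}^{-}=-\lm_{0}^{+}$.

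For the geometric multiplicities I would invoke the conjugation $\Mh(1,\lm^{2},u) = A(0,\lm,u)\Mz(1,\lm,\ph_{u})A(0,\lm,u)^{-1}$ from Lemma~\ref{comp-fs}, valid for $\lm\neq 0$. Conjugate Floquet matrices have equal eigenspace dimensions for every multiplier, and since $\Dh(\lm^{2},u)=\Dz(\lm,\ph_{u})$ the two traces agree, so the distinguished multiplier $\pm 1$ attached to $\mu=\lm^{2}$ (periodic versus antiperiodic) coincides with the one attached to $\lm$; as the geometric multiplicity of a periodic eigenvalue is the dimension of the corresponding $\pm 1$-Floquet eigenspace, the multiplicities of $\mu_{n}^{\pm}(u)$ and $\lm_{n}^{\pm}(\ph_{u})$ agree at every nonzero eigenvalue. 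Finally, the ``in particular'' claims follow: squaring $\Gz_{n}=[\lm_{n}^{-},\lm_{n}^{+}]$, whose endpoints are positive for $n\ge 1$, yields $[\mu_{n}^{-},\mu_{n}^{+}]=\Gh_{n}$, and positivity of both endpoints turns $(\lm_{n}^{+})^{2}=(\lm_{n}^{-})^{2}$ into $\lm_{n}^{+}=\lm_{n}^{-}$, giving $\dl_{n}(u)=0$ iff $\gm_{n}(\ph_{u})=0$.

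The step I expect to require the most care is the index matching of the middle paragraph: one must ensure that the folding $\lm\mapsto\lm^{2}$ sends $\lm_{n}^{\pm}$ to $\mu_{n}^{\pm}$ with the correct sign and the correct index $n$, rather than to a neighbouring slot, and this is precisely what the combination of the symmetry \eqref{eq:spectralsymmetrie}, the two asymptotic normalizations, and the real orderings secures. A secondary subtlety is the degenerate value $\lm=0$, where $A(0,\lm,u)$ is not invertible; but $\lm=0$ can only occur as the $n=0$ pair $\lm_{0}^{\pm}$ (squaring to the minimal value $\mu_{0}^{+}=0$), so for $n\ge 1$ the relevant eigenvalues are nonzero and the conjugation argument applies unchanged.
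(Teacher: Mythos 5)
Your proposal is correct and follows essentially the same route as the paper: the identity $\Dh(\lm^{2},u)=\Dz(\lm,\ph_{u})$ from Theorem~\ref{comp-dl} to match the spectra under $\lm\mapsto\lm^{2}$, the reality of the spectra together with the symmetry \eqref{eq:spectralsymmetrie} and the lexicographic orderings $0\le\mu_{0}^{+}\le\mu_{1}^{-}\le\dotsb$ and $\dotsb\le\lm_{0}^{-}\le 0\le\lm_{0}^{+}\le\lm_{1}^{-}\le\dotsb$ to fix the labelling, and the conjugacy of the monodromy matrices from Lemma~\ref{comp-fs} to compare geometric multiplicities. Your extra care about multiplicities of zeros and about the degenerate point $\lm=0$ is sound and slightly more explicit than the paper's own argument, but does not change the method.
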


\begin{proof}
By Theorem~\ref{comp-dl}, $\mu = \lm^{2}$ is an eigenvalue of $\Lh(u)$ if and only if $\lm$ is an eigenvalue of $\Lz(\ph_{u})$. If $u$ is real-valued, then the periodic spectra of $u$ and $\ph_{u}$ are real, and due to the symmetry and the lexicographical ordering
\[
  0\le \mu_{0}^{+} \le \mu_{1}^{-} \le \mu_{1}^{+} \le \dotsb,\qquad
  \dotsb \le \lm_{0}^{-} \le 0 \le \lm_{0}^{+}\le \lm_{1}^{-} \le \lm_{1}^{+} \le \dotsb.
\]
Consequently, $\mu_{0}^{+} = (\lm_{0}^{+})^{2} = (\lm_{0}^{-})^{2}$, and $\mu_{n}^{-} = (\lm_{n}^{-})^{2}$ as well as $\mu_{n}^{+} = (\lm_{n}^{+})^{2}$ for any $n\ge 1$. Thus $\Gz_{n}^{2} = [(\lm_{n}^{-})^{2},(\lm_{n}^{+})^{2}] = \Gh_{n}$ for any $n\ge 1$.

Finally, $\mu_{n}^{+} = \mu_{n}^{-}$ if and only if $\lm_{n}^{+} = \lm_{n}^{-}$, and the fundamental solutions $\Mh(1,\mu_{n}^{\pm},u)$ and $\Mz(1,\lm_{n}^{\pm},\ph_{u})$ are conjugated by Lemma~\ref{comp-fs}. Thus there exist two linear independent eigenfunctions for $\mu_{n} = \mu_{n}^{+} = \mu_{n}^{-}$ if and only if they exist for $\lm_{n} = \lm_{n}^{+} = \lm_{n}^{-}$.\qed
\end{proof}

\begin{proof}[Proof of Theorem~\ref{thm:bnf} (i).]
For $u\in H_{r}^{1}$ it follows from \eqref{Jn}, \eqref{In}, and Lemma~\ref{comp-spec} that for any $n\ge 1$
\[
  J_{n}(u) = 0
   \;\iff\;
  \dl_{n}(u) = 0
   \;\iff\;
  \gm_{n}(\ph_{u}) = 0
   \;\iff\;
  I_{n}(\ph_{u}) = 0.
\]
One concludes from \eqref{Dz-sq} -- see also \cite{Grebert:2014iq} -- that $\Dz(\lm,\ph)-2$ vanishes at $\lm_{0}^{\pm}$ and is strictly positive on the interior of $\Gz_{0}$. 
By \eqref{eq:spectralsymmetrie}, $\Gz_{0}(\ph_{u})$ is a symmetric interval around zero, hence $I_{0}(\ph_u)$ is strictly positive if any only if
\[
  \Dz(0,\ph_u) = \Dh(0,u) \neq 2.
\]
It was observed in \cite{Kappeler:2008fk} that
\begin{align}
  \label{av-cosh}
  \Dh(0,u) = 2\cosh([u]),
\end{align}
hence $I_{0}(\ph_u)$ vanishes if and only if the average $[u]$ vanishes.\qed
\end{proof}

\begin{lem}
\label{comp-croot}
For $u\in H_{r}^{1}$ and $\lm\in \C\setminus\bigcup_{n\in\Z} \Gz_{n}$ with $\Re \lm > 0$,
\[
  \sqrt[c]{\Dh^2(\lm^2,u)-4} = \sqrt[c]{\Dz^2(\lm,\ph_u)-4}.\fish
\]
\end{lem}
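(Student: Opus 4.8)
The plan is to reduce everything to the fact that, by Theorem~\ref{comp-dl}, the two radicands are literally the same function of $\lm$: setting $g(\lm)\defl\Dz^{2}(\lm,\ph_{u})-4=\Dh^{2}(\lm^{2},u)-4$, both sides of the claimed identity are analytic square roots of $g$ on the set $D\defl\setd{\Re\lm>0}\setminus\bigcup_{n\in\Z}\Gz_{n}$. Since any two analytic square roots of a nowhere-vanishing function on a connected domain differ by a globally constant sign, the proof splits into (a) verifying that both roots are analytic on the connected set $D$, and (b) fixing that sign at a single point.

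For (a) I would argue as follows. The right-hand root $\sqrt[c]{\Dz^{2}(\lm)-4}$ is analytic on $D$ by its very construction. For the left-hand root, recall that $\lm\mapsto\lm^{2}$ maps the open right half-plane biholomorphically onto $\C\setminus(-\infty,0]$. By Lemma~\ref{comp-spec} it carries the excised gaps $\Gz_{n}$, $n\ge1$, precisely onto the mKdV bands $\Gh_{n}=\Gz_{n}^{2}=[\mu_{n}^{-},\mu_{n}^{+}]$, while the condition $\Re\lm>0$ already removes $(-\infty,0]$ and the image of $\Gz_{0}\cap\setd{\Re\lm>0}$ accounts for the remaining segment $(0,\mu_{0}^{+}]$; together these make up the full mKdV band $\Gh_{0}=\setd{\mu_{0}^{+}-t:t\ge0}$. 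Hence $\lm\mapsto\lm^{2}$ sends $D$ into the domain of analyticity of the mKdV canonical root, so $\lm\mapsto\sqrt[c]{\Dh^{2}(\lm^{2})-4}$ is analytic on $D$. That $D$ is connected is elementary: for real $u$ the gaps lie on the real axis, and deleting a discrete family of real segments from the open right half-plane leaves the upper and lower parts joined through the intervening spectral bands. Since $g$ vanishes only at the band edges $\lm_{n}^{\pm}$, which sit inside the deleted gaps, $g$ has no zeros on $D$, and the ratio of the two roots is a continuous $\setd{\pm1}$-valued, hence constant, function $\vs$.

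It remains to determine $\vs$, which I would do by evaluating at any point $\lm$ of the interval $(\lm_{0}^{+},\lm_{1}^{-})\subset D$. For such $\lm$ one has $\lm^{2}\in(\mu_{0}^{+},\mu_{1}^{-})$ by Lemma~\ref{comp-spec}, so the normalizations \eqref{c-nls} and \eqref{c-mkdv} give $\ii\sqrt[c]{\Dz^{2}(\lm)-4}>0$ and $\ii\sqrt[c]{\Dh^{2}(\lm^{2})-4}>0$ simultaneously. Both roots thus lie on the negative imaginary axis and have equal square, so they coincide, forcing $\vs=+1$ and hence the asserted identity throughout $D$. The main obstacle is really the bookkeeping of step (a): confirming through Lemma~\ref{comp-spec} and the spectral symmetry \eqref{eq:spectralsymmetrie} that squaring matches the excised gap structures of the two operators exactly; a minor wrinkle is the nonemptiness of $(\lm_{0}^{+},\lm_{1}^{-})$, with the degenerate case $\lm_{0}^{+}=\lm_{1}^{-}$ dispatched by continuity in the potential $u$.
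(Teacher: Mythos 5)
Your proposal is correct and follows essentially the same route as the paper's proof: identify the two radicands via Theorem~\ref{comp-dl}, use Lemma~\ref{comp-spec} to check that $\lm\mapsto\lm^{2}$ carries $D$ into the domain of analyticity of the canonical mKdV root so that the two roots can differ only by a locally constant sign, and then fix that sign on the interval $(\lm_{0}^{+},\lm_{1}^{-})$ using the normalizations \eqref{c-mkdv} and \eqref{c-nls} together with $\mu_{0}^{+}=(\lm_{0}^{+})^{2}$ and $\mu_{1}^{-}=(\lm_{1}^{-})^{2}$. The only additions are your explicit connectedness check for $D$ and the remark on the degenerate case $\lm_{0}^{+}=\lm_{1}^{-}$, both of which the paper leaves implicit.
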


\begin{proof}
On $D = (\C\setminus \bigcup_{n\in\Z} \Gz_{n}) \cap \setd{\Re \lm > 0}$ the canonical \nls root of $\ph_{u}$ is analytic, and, by Lemma~\ref{comp-spec}, $D$ is mapped by $\lm\mapsto \lm^{2}$ onto the domain $\C\setminus \bigcup_{n\ge 0} \Gh_{n}$ where the canonical \mkdv root of $u$ is analytic. Moreover, $\Dh^{2}(\lm^{2},u) - 4 = \Dz^{2}(\lm,\ph_u)-4$, by Theorem~\ref{comp-dl}, hence these roots differ at most by a sign. Recall from~\eqref{c-mkdv} that the canonical \mkdv root is chosen such that
\[
  \ii \sqrt[c]{\Dh^{2}(\mu,u)-4} > 0,\qquad \mu_{0}^{+} < \mu < \mu_{1}^{-},
\]
and from~\eqref{c-nls} that the canonical \nls root is chosen such that
\[
  \ii \sqrt[c]{\Dz^{2}(\lm,\ph_u)-4} > 0,\qquad \lm_{0}^{+} < \lm < \lm_{1}^{-}.
\]
Since $\mu_{0}^{+} = (\lm_{0}^{+})^{2}$ and $\mu_{1}^{-} = (\lm_{1}^{-})^{2}$, both roots have the same sign provided that $\Re \lm >0$.\qed
\end{proof}

\begin{lem}
\label{comp-F}
Suppose $u\in H_{r}^{1}$, then on $\C\setminus\bigcup_{n\in\Z} \Gz_{n}$ provided $\Re \lm > 0$,
\[
  \Fh(\lm^{2},u) = \Fz(\lm,\ph_u).\fish
\]
\end{lem}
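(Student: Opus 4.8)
\emph{Plan.}
The plan is to fix $u\in H_{r}^{1}$ and compare $\Fh(\lm^{2},u)$ and $\Fz(\lm,\ph_{u})$ as analytic functions of $\lm$ on the set
\[
  D \defl \bigl(\C\setminus\textstyle\bigcup_{n\in\Z}\Gz_{n}\bigr)\cap\setd{\Re\lm>0}.
\]
Both are well defined and analytic there: $\Fz(\lm,\ph_{u})$ by the discussion following \eqref{Fz}, and $\Fh(\lm^{2},u)$ because, by Lemma~\ref{comp-spec}, the map $\lm\mapsto\lm^{2}$ sends $D$ onto $\C\setminus\bigcup_{n\ge0}\Gh_{n}$, the domain where $\Fh(\cdot,u)$ is analytic. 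I will show the two functions have the same $\lm$-derivative on $D$, conclude that they differ by a constant, and then pin that constant to $0$ via their leading behaviour as $\lm\to\infty$.

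For the derivatives, \eqref{Fh} and the fundamental theorem of calculus give $\partial_{\mu}\Fh = \dDh/\sqrt[c]{\Dh^{2}-4}$, so by the chain rule
\[
  \partial_{\lm}\Fh(\lm^{2},u)
   = \frac{2\lm\,\dDh(\lm^{2},u)}{\sqrt[c]{\Dh^{2}(\lm^{2},u)-4}}.
\]
Differentiating the identity $\Dh(\lm^{2},u)=\Dz(\lm,\ph_{u})$ of Theorem~\ref{comp-dl} in $\lm$ yields $2\lm\,\dDh(\lm^{2},u)=\dDz(\lm,\ph_{u})$, and Lemma~\ref{comp-croot} identifies the denominator with $\sqrt[c]{\Dz^{2}(\lm,\ph_{u})-4}$ on $D$, so that $\partial_{\lm}\Fh(\lm^{2},u) = \dDz(\lm,\ph_{u})/\sqrt[c]{\Dz^{2}(\lm,\ph_{u})-4}$. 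On the other hand, since the two integrands in \eqref{Fz} coincide, $\partial_{\lm}\Fz = \dDz/\sqrt[c]{\Dz^{2}-4}$ as well. Hence $\partial_{\lm}\Fh(\lm^{2},u)=\partial_{\lm}\Fz(\lm,\ph_{u})$ throughout $D$.

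The set $D$ is connected, being the open right half-plane with the bounded real slits $\Gz_{n}$ removed, and these accumulate only at $+\infty$; therefore $\Fh(\lm^{2},u)-\Fz(\lm,\ph_{u})$ is constant on $D$. To evaluate it I let $\lm\to+\infty$ along the positive real axis, which meets only finitely many gaps. By the leading term of \eqref{exp-nls}, $\Fz(\lm,\ph_{u})=-\ii\lm+O(\lm^{-1})$; by the leading term of \eqref{exp-mkdv}, $\Fh(\mu,u)=-\ii\sqrt{\mu}+O(\mu^{-1/2})$, and since $\Re\lm>0$ forces $\sqrt{\lm^{2}}=\lm$, also $\Fh(\lm^{2},u)=-\ii\lm+O(\lm^{-1})$. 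Thus the difference tends to $0$, the constant vanishes, and $\Fh(\lm^{2},u)=\Fz(\lm,\ph_{u})$ on $D$, as claimed. The one point that must be handled with care is the branch bookkeeping: the asserted equality is sign-sensitive, and it is precisely the restriction $\Re\lm>0$ that makes both the canonical roots (through Lemma~\ref{comp-croot}) and the square root $\sqrt{\lm^{2}}=\lm$ match with consistent signs — on $\setd{\Re\lm<0}$ one would instead be led to the spectral symmetry \eqref{eq:spectralsymmetrie}. I expect this sign matching, together with fixing the integration constant, to be the only genuine obstacle, everything else following mechanically from Theorem~\ref{comp-dl} and Lemmas~\ref{comp-spec}--\ref{comp-croot}.
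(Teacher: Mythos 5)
Your core computation---that $\partial_{\lm}\Fh(\lm^{2},u)$ and $\partial_{\lm}\Fz(\lm,\ph_u)$ agree on $D$ via Theorem~\ref{comp-dl} and Lemma~\ref{comp-croot}---is exactly the content of the paper's proof, which performs the same comparison as a change of variables $w=z^{2}$ inside the integral \eqref{Fz}. Where you diverge is in fixing the additive constant, and that step has a genuine gap. First, your claim that the positive real axis ``meets only finitely many gaps'' is false for general $u\in H_{r}^{1}$: the gaps $\Gz_{n}=[\lm_{n}^{-},\lm_{n}^{+}]$ are real intervals near $n\pi$ for every $n\ge 1$, and unless $u$ is a finite-gap potential infinitely many of them are nondegenerate, so $\R_{>0}$ is not eventually contained in $D$ and you cannot take the limit along it. Second, the expansions \eqref{exp-mkdv} and \eqref{exp-nls} that you invoke are stated in the paper only for finite-gap potentials (and \eqref{exp-mkdv} only along the discrete sequence $\mu=a_{n}^{2}$), so using them for arbitrary $u\in H_{r}^{1}$ is not justified by anything available to you; you would either have to prove the leading asymptotics $F(\lm)=-\ii\lm+\o(1)$ along a sequence in $D$ for general potentials, or first prove the lemma for finite-gap potentials and then extend by density and continuity in $u$ at a fixed non-real $\lm$ --- neither of which you do.

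The repair is much cheaper than going to infinity: both functions vanish at matching base points. By definition $\Fz(\lm_{0}^{+},\ph_u)=0$, by \eqref{Fh} $\Fh(\mu_{0}^{+},u)=0$, and by Lemma~\ref{comp-spec} $\mu_{0}^{+}=(\lm_{0}^{+})^{2}$ with $\lm_{0}^{+}\ge 0$. Since both sides of the claimed identity extend continuously to the gap endpoints (the integrands have integrable square-root singularities there), letting $\lm\to\lm_{0}^{+}$ from within $D$ shows the constant is zero. This is precisely what the paper's substitution argument accomplishes implicitly, by transporting the lower limit $\lm_{0}^{+}$ of the integral to $\mu_{0}^{+}$. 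With that replacement your proof is correct and essentially coincides with the paper's.
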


\begin{proof}
Since $\dDz(\lm,\ph_u) = 2\lm \dDh(\lm^{2},u)$ by Theorem~\ref{comp-dl}, we conclude with \eqref{Fz} and Lemma~\ref{comp-croot},
\begin{align*}
  \Fz(\lm,\ph_{u})
    &= \int_{\lm_{0}^{+}}^{\lm}
       \frac{\dDz(z,\ph_u)}{\sqrt[c]{\Dz^{2}(z,\ph_u)-4}}\,\dz%\\
    = \int_{\lm_{0}^{+}}^{\lm}
       \frac{\dDh(z^{2},u)}{\sqrt[c]{\Dh^{2}(z^{2},u)-4}}\, 2z\,\dz.
\end{align*}
Now substituting $w = z^{2}$, and using that $\mu_{0}^{+} = (\lm_{0}^{+})^{2} \ge 0$ and $\Re\lm > 0$, yields in view of \eqref{Fh}
\begin{align*}
  \Fz(\lm,\ph_{u})
    &= \int_{(\lm_{0}^{+})^{2}}^{\lm^{2}}
       \frac{\dDh(w,u)}{\sqrt[c]{\Dh^{2}(w,u)-4}}\, \dw
     = \Fh(\lm^{2},u).\qed
\end{align*}
\end{proof}

At this point, we may recover Chodos' observation in the framework of the \nls hierarchy, which in addition allows us to prove that the Hamiltonians $\Hz_{2m}$ for any $m\ge 1$ vanish at any point $\ph_{u}$. To simplify notation, for any functional $f$ we set $f^{\rest}(u) \defl f(\ph_{u})$.

\begin{prop}
\label{ham-id-mkdv-nls}
The Hamiltonians of the \mkdv and \nls hierarchies satisfy for every $m\ge 1$ 
\[
  \Hh_{m} = \frac{1}{2}\Hz_{2m-1}^{\rest}\text{ on }H_{c}^{m-1},\qquad
  \Hz_{2m}^{\rest} = 0\text{ on }H_{c}^{m}.\fish
\]
\end{prop}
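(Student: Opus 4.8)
The plan is to deduce both identities from the two asymptotic expansions \eqref{exp-mkdv} and \eqref{exp-nls}, identified through Lemma~\ref{comp-F}. First I would fix a real-valued finite-gap potential $u\in H_{r}^{1}$. By Lemma~\ref{comp-spec} the gaps of $\ph_{u}=(u,u)$ are, up to the reflection \eqref{eq:spectralsymmetrie} and the central gap $\Gz_{0}$, the square roots of the gaps of $u$; hence $\ph_{u}$ is itself a finite-gap Zakharov–Shabat potential and both expansions are available. Substituting $\mu=\lm^{2}$ into \eqref{exp-mkdv}, so that $a_{n}=\sqrt{\mu}$ is replaced by $\lm$ in the right half-plane, recasts the \mkdv expansion as an asymptotic series in $1/\lm$,
\[
  \Fh(\lm^{2},u) = -\ii\lm + \ii\sum_{1\le k\le N} \frac{2\Hh_{k}(u)}{(2\lm)^{2k-1}} + O(\lm^{-2N-1}),\qquad \Re\lm>0,\ \abs{\lm}\to\infty.
\]

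By Lemma~\ref{comp-F} the left-hand side equals $\Fz(\lm,\ph_{u})$ on $\Re\lm>0$, and the \nls expansion \eqref{exp-nls} gives $\Fz(\lm,\ph_{u})=-\ii\lm+\ii\sum_{n\ge 1}\Hz_{n}^{\rest}(u)/(2\lm)^{n}$. Since these are asymptotic expansions of one and the same analytic function on a common unbounded region of the half-plane, their coefficients coincide. Matching powers of $(2\lm)^{-1}$ then yields the whole proposition at once: the odd powers $(2\lm)^{-(2k-1)}$ force $2\Hh_{k}(u)=\Hz_{2k-1}^{\rest}(u)$, i.e. $\Hh_{m}=\tfrac12\Hz_{2m-1}^{\rest}$ with $m=k$, whereas the \mkdv series contains no even powers, forcing $\Hz_{2k}^{\rest}(u)=0$, i.e. $\Hz_{2m}^{\rest}=0$.

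This establishes both identities for real-valued finite-gap potentials. To pass to the full Sobolev spaces, I would invoke continuity and analyticity: each $\Hh_{m}$ and each $\Hz_{2m-1}^{\rest}$ (respectively $\Hz_{2m}^{\rest}$) is a polynomial integral functional, hence real-analytic on $H_{c}^{m-1}$ (respectively $H_{c}^{m}$). As finite-gap potentials are dense in $H_{r}^{1}$, the identities first propagate to all of the real Sobolev spaces $H_{r}^{m-1}$ and $H_{r}^{m}$ by continuity, and then analyticity of both sides in $u$ upgrades them from the real subspace to the complex spaces $H_{c}^{m-1}$ and $H_{c}^{m}$.

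The step I expect to demand the most care is bookkeeping rather than a genuine difficulty. I must make precise that \eqref{exp-mkdv}, originally stated along the discrete sequence $\mu=a_{n}^{2}$, genuinely expresses an asymptotic expansion of the analytic function $\Fh(\lm^{2},u)$ as $\abs{\lm}\to\infty$ in a sector of $\Re\lm>0$, so that the coefficient comparison with \eqref{exp-nls} is legitimate; and I must keep track of the index shift between $m$ and $2m$ together with the differing natural domains $H_{c}^{m-1}$ and $H_{c}^{m}$ of the two identities when carrying out the density and analyticity continuation.
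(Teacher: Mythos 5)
Your proposal is correct and is essentially the paper's own argument: the paper likewise deduces both identities by matching the coefficients of the expansions \eqref{exp-mkdv} and \eqref{exp-nls} through Lemma~\ref{comp-F} for finite-gap potentials, and then extends to $H_{c}^{m-1}$ and $H_{c}^{m}$ by analyticity (Lemma~\ref{ana-vanish}). The only point worth noting is that your worry about upgrading \eqref{exp-mkdv} from the discrete sequence $\mu=a_{n}^{2}$ to a sector is unnecessary: uniqueness of asymptotic coefficients along a sequence tending to infinity already suffices for the comparison.
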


\begin{proof}
It immediately follows from the preceding lemma and the expansions \eqref{exp-mkdv} and \eqref{exp-nls} that for each $m\ge 1$,
\[
  \Hh_{m}(u)  = \frac{1}{2}\Hz_{2m-1}(\ph_u) \quad\text{on}\quad H_{r}^{m-1},\qquad
  \Hz_{2m}(\ph_{u}) = 0 \quad\text{on}\quad H_{r}^{m}.
\]
Since both hand sides are analytic in $u$, these identities extend to all of $H_{c}^{m-1}$ and $H_{c}^{m}$, respectively, by Lemma~\ref{ana-vanish}.\qed
\end{proof}

To be able to compare the actions of \mkdv and \dnls, it is convenient to introduce actions defined  on integer levels $k$ -- see \cite{McKean:1997ka,Grebert:2014iq}. More precisely, for $u\in \Wh$ the $n$th \mkdv action, $n\ge 1$, on level $k\in \Z$ is defined by
\begin{equation}
  \label{Jnk}
    J_{n,k}(u)
   \defl
  -\frac{1}{4\pi}\int_{\Gmh_n} \mu^{k-2} \Fh(\mu,u)\,\dmu,
\end{equation}
where $\Gmh_{n}$ is a sufficiently close circuit around $\Gh_{n}$ which does not enclose the origin.
Note that $J_{n} = J_{n,1}$.
Similarly, for $\ph_{u}\in \Wz$, the $n$th \nls action, $n\in\Z$, on level $k\in \Z$ is defined by
\begin{equation}
  \label{Ink}
  I_{n,k}(\ph_{u}) \defl -\frac {1}{\pi}\int_{\Gmz_n}\lm^{k-1} \Fz(\lm,\ph_{u})\,\dlm,
\end{equation}
where $\Gmz_{n}$ is a sufficiently close circuit around $\Gz_{n}$ which in the case $n\neq 0$ does not enclose the origin, while $\Gmz_{0}$ is a circuit around the origin.
Note that $I_{n} = I_{n,1}$.

\begin{lem}
\label{comp-In-Jn}
On  $H_{r}^{1}$ for any $n\ge 1$ and any $k\in\Z$,
\[
  2J_{n,k}(u) = -\frac{1}{\pi}\int_{\Gmz_{n}} \lm^{2k-3} \Fz(\lm,\ph_u)\,\dlm = I_{n,2k-2}(\ph_u).
\]
In particular, $I_{n,2k-2}$ is an analytic extension of $(u,u)\mapsto 2J_{n,k}(u)$ onto an open neighborhood of $\Ec_{r}^{0}$ within $\Hc_{c}^{0}$.\fish
\end{lem}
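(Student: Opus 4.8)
The plan is to start from the defining contour integral \eqref{Jnk} for $J_{n,k}$ and carry out the change of variables $\mu = \lm^{2}$, feeding in the discriminant identity and the spectral correspondence established earlier. First I would assemble the two ingredients I need. By Lemma~\ref{comp-spec}, for $n\ge 1$ and real $u$ the squaring map $\lm\mapsto\lm^{2}$ sends $\Gz_{n}(\ph_u)$ onto $\Gh_{n}(u)$, and both intervals lie on the positive real axis; by Lemma~\ref{comp-F}, $\Fh(\lm^{2},u) = \Fz(\lm,\ph_u)$ whenever $\Re\lm>0$. Since the isolating neighborhood $\Uz_{n}$ is centered at $n\pi$ and is disjoint from $\Uz_{0}$, it lies entirely in $\setd{\Re\lm>0}$ for every $n\ge 1$; hence the circuit $\Gmz_{n}\subset\Uz_{n}$ may be taken inside the open right half-plane, precisely the region where Lemma~\ref{comp-F} applies.

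Next I would perform the substitution itself. Writing $\mu=\lm^{2}$, so that $\dmu = 2\lm\,\dlm$, I use that $\lm\mapsto\lm^{2}$ restricts to a biholomorphism of $\setd{\Re\lm>0}$ onto $\C\setminus(-\infty,0]$. A small counterclockwise circuit $\Gmh_{n}$ around $\Gh_{n}\subset(0,\infty)$ avoids $(-\infty,0]$ and pulls back under this biholomorphism to a circuit around $\Gz_{n}$; because the map is orientation-preserving on the right half-plane (its derivative $2\lm$ has argument in $(-\pi/2,\pi/2)$), the pulled-back circuit is again counterclockwise and does not enclose the origin, so it is an admissible choice of $\Gmz_{n}$. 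By homotopy invariance the value of the contour integral is independent of this choice among sufficiently close circuits, so no generality is lost. Substituting into \eqref{Jnk} and invoking Lemma~\ref{comp-F} then gives
\[
  J_{n,k}(u)
   = -\frac{1}{4\pi}\int_{\Gmz_{n}} (\lm^{2})^{k-2}\,\Fz(\lm,\ph_u)\,2\lm\,\dlm
   = -\frac{1}{2\pi}\int_{\Gmz_{n}} \lm^{2k-3}\,\Fz(\lm,\ph_u)\,\dlm,
\]
which is exactly the first asserted identity after multiplication by $2$. Comparing with the definition \eqref{Ink} of $I_{n,2k-2}$, whose integrand carries the factor $\lm^{(2k-2)-1}=\lm^{2k-3}$, immediately yields $2J_{n,k}(u) = I_{n,2k-2}(\ph_u)$.

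For the concluding \emph{in particular} statement I would invoke the standard real-analyticity of the \nls actions: $I_{n,2k-2}$ is a real-analytic function of $\ph$ on an open neighborhood of $\Hc_{r}^{0}$ within $\Hc_{c}^{0}$. Since by definition $\Ec_{r}^{0}\subset\Hc_{r}^{0}$, such a neighborhood contains an open neighborhood of $\Ec_{r}^{0}$, and the identity just proved shows that $I_{n,2k-2}$ restricts to $(u,u)\mapsto 2J_{n,k}(u)$ on $\Ec_{r}^{0}$. Hence $I_{n,2k-2}$ furnishes the claimed analytic extension.

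I expect the only delicate point to be the bookkeeping for the contour: verifying that its orientation is preserved, that it avoids the origin, and that it stays in $\setd{\Re\lm>0}$ so that Lemma~\ref{comp-F} may be applied on it. The algebra of the change of variables and the matching of the powers of $\lm$ is otherwise routine.
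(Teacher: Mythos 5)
Your proof is correct and follows essentially the same route as the paper: substitute $\mu=\lm^{2}$ in the defining contour integral, use that the squaring map carries a sufficiently close circuit $\Gmz_{n}$ around $\Gz_{n}$ bijectively onto a circuit $\Gmh_{n}$ around $\Gh_{n}$, and invoke Lemma~\ref{comp-F} to replace $\Fz(\lm,\ph_{u})$ by $\Fh(\lm^{2},u)$. The extra care you take with the orientation of the pulled-back contour and with keeping it in $\setd{\Re\lm>0}$ (where Lemma~\ref{comp-F} applies) is left implicit in the paper's proof but is exactly the right bookkeeping.
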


\begin{proof}
For any $n\ge 1$, the map $\lm\mapsto \lm^{2}$ maps any sufficiently close circuit $\Gmz_{n}$ around $\Gz_{n}$ bijectively onto a circuit $\Gmh_{n}$ around $\Gh_{n}$. Consequently, by the transformation formula and the previous lemma
\begin{align*}
  I_{n,2k-2}^{\rest} &= - \frac{1}{\pi}\int_{\Gmz_{n}} \lm^{2k-3} \Fz^{\rest}(\lm)\,\dlm\\
  &= - \frac{1}{2\pi}\int_{\Gmz_{n}} \lm^{2k-4} \Fh(\lm^{2})\,2\lm\,\dlm\\
  &= - \frac{1}{2\pi}\int_{\Gmh_{n}} \mu^{k-2} \Fh(\mu)\,\dmu = 2J_{n,k}.\qed
\end{align*}
\end{proof}

\begin{proof}[Proof of Theorem~\ref{thm:bnf} (ii).]
After possibly shrinking $\Wh$, we may assume that $u\in \Wh$ implies $\ph_{u}\in \Wz$. As a result, $I_{n}^{\rest}(u) = I_{n}(\ph_{u})$, $n\in\Z$, defines an analytic function on $\Wh$. Moreover, $\gradh_{u} \Dz^{\rest}(\lm) = \gradh_{u} \Dh(\lm^{2})$ by Theorem~\ref{comp-dl}, hence
\[
  \gradh I_{n}^{\rest}(u)
  = -\frac{1}{\pi}\int_{\Gm_{n}}\frac{\gradh_{u}\Dz^{\rest}(\lm)}{\sqrt[c]{\Dz^{2}(\lm,\ph_{u})-4}}\,\dlm
  = -\frac{1}{\pi}\int_{\Gm_{n}}\frac{\gradh_{u}\Dh(\lm^{2})}{\sqrt[c]{\Dh^{2}(\lm^{2},u)-4}}\,\dlm.
\]
As shown in \cite[Lemma 10.2]{Kappeler:2003up}, we have $\gbr{\Dh(\mu_{1}),\Dh(\mu_{2})} = 0$ for any $\mu_{1},\mu_{2}\in\C$. Hence, for any $n\in\Z$ and $m\ge 1$ on $\Wh$,
\[
  \gbr{I_{n}^{\rest},J_{m}} 
  = \frac{1}{4\pi^{2}}\int_{\Gmz_{n}}\int_{\Gmh_{m}}\frac{1}{\mu}
    \frac{\gbr{\Dh(\lm^{2}),\Dh(\mu)}}{\sqrt[c]{\Dh^{2}(\lm^{2})-4}\sqrt[c]{\Dh^{2}(\mu)-4}}
    \,\dlm\,\dmu = 0.
\]
Consequently, each $I^{\rest}_{n}\circ\Oh^{-1}$, where $\Oh$ denotes the \mkdv Birkhoff map, is a real analytic function of the actions $(J_{m})_{m\ge 1}$ and the average $[u]$ alone. 

Conversely, by the preceding lemma each $J_{m}$, $m\ge 1$, extends to an analytic function $\tilde J_{m} = I_{m,0}$ on an open neighborhood of $\Ec_{r}^{0}$ within $\Hc_{c}^{0}$. Moreover,
\[
  \nbr{\tilde J_{m},I_{n}}
   = \nbr{I_{m,0},I_{n}}
   = \frac{1}{\pi^{2}}\int_{\Gmz_{n}}\int_{\Gmz_{m}}
     \frac{1}{z}\frac{\nbr{\Dz(z),\Dz(w)}}{\sqrt[c]{\Dz^{2}(z)-4}\sqrt[c]{\Dz^{2}(w)-4}}\,\dz\,\dw = 0,
\]
using that $\nbr{\Dz(z),\Dz(w)} = 0$ for any $z,w\in \C$ by \cite[Lemma 8.3]{Grebert:2014iq}. So, with $\Oz$ denoting the \nls Birkhoff mapping, $\tilde{J}_{m}\circ\Oz^{-1}$ is a real analytic function of  the actions $(I_{n})_{n\in\Z}$ alone. For the average we have for $u\in H_{r}^{1}$ by \eqref{av-cosh}
\[
  [u]
   = \cosh^{-1}\frac{\Dz(0,\ph_u)}{2}
   = -\int_{\lm_{0}^{-}}^{\tau_{0}} \frac{\dDz(\lm,\ph_u)}{\sqrt[c]{\Dz^{2}(\lm,\ph_u)-4}}\,\dlm,
\]
where $\tau_{0} = (\lm_{0}^{+}+\lm_{0}^{-})/2=0$ and the path of integration is chosen to run on the right hand side of the straight line connecting $\lm_{0}^{-}$ and $\tau_{0}$ in the complex plane. One shows by exactly the same arguments as in the proof of \cite[Proposition A2]{Molnar:2014vg} that the latter defines a real analytic function on all of $\Hc_{r}^{0}$. Since it only depends on the periodic spectrum, it is a real analytic function of the actions alone.\qed
\end{proof}

% ===================================================================================================
\section{Symmetries of the Zakharov-Shabat discriminant}
\label{s:symmetries}

In this section we obtain several symmetries of the Zakharov-Shabat discriminant under the transformations
\[
  \ph \mapsto P\ph,\quad \ph\mapsto R_{\al}\ph,\quad\ph\mapsto T\ph\qquad
  P \defl \mat[\bigg]{
  0 & 1\\
  1 & 0},
  \quad
  R_{\al} \defl \mat[\bigg]{
  \e^{\ii\al} & 0\\
  0   & \e^{-\ii \al}},
  \quad \al\in\R,
\]
and $T\ph(x) = \ph(1-x)$. By Theorem \ref{comp-dl} those symmetries translate into corresponding symmetries of the Hill discriminant.
To simplify notation, let
\[
  J \defl \mat[\bigg]{
  0 & 1\\
  -1 & 0},
  \qquad
  R \defl R_{\pi/2} = 
  \mat[\bigg]{
  \ii & 0\\
  0   & -\ii}.
\]

\begin{thm}
\label{thm:sym-dl}
The discriminant $\Dz$ has the following symmetries
\begin{equivenum}

\item 
$\Dz(\lm,\ph) = \Dz(-\lm,\cph) = \Dz(-\lm,T\ph) = \Dz(\lm,R_{\al}\ph)$ for all $\lm\in\C$;

In particular, for all $n\in\Z$,
\begin{align*}
  &\lm_{n}^{\pm}(\ph) = -\lm_{-n}^{\mp}(\cph) = -\lm_{-n}^{\mp}(T\ph) = \lm_{n}^{\pm}(R_{\al}\ph),\\
  &G_{n}(\ph) =  -G_{-n}(\cph) = -G_{-n}(T\ph) = G_{n}(R_{\al}\ph),
\end{align*}
hence one can choose $\Wz$ to be invariant under $P$, $T$, and $R_{\al}$ for any $\al\in\R$.

\item 
$
      \dDz(\lm,\ph)
   = -\dDz(-\lm,\cph)
   = -\dDz(-\lm,T\ph) 
   =  \dDz(\lm,R_{\al}\ph)
$ for all $\lm\in\C$ and all $\al\in\R$.

\item
$\gradz \Dz(\lm,\ph) = P\gradz \Dz(-\lm,\cph) = T\gradz \Dz(-\lm,T\ph) = R_{\al}\gradz \Dz(\lm,R_{\al}\ph)$ for all $\lm\in\C$ and $\al\in\R$.

\item
If $\ph\in\Wz$, then for all $\lm\in\C\setminus\bigcup_{n\in\Z} \Gz_{n}$ and all $\al\in\R$,
\[
  \sqrt[c]{\Dz^{2}(\lm,\ph)-4}
   = -\sqrt[c]{\Dz^{2}(-\lm,\cph)-4}
   = -\sqrt[c]{\Dz^{2}(-\lm,T\ph)-4}
   = \sqrt[c]{\Dz^{2}(\lm,R_{\al}\ph)-4}.
\]

%%
%\item If $\cph = R_{\al}\ph$ for some $\al\in\R$, then for all $\lm\in\C$
%\[
%  2\lm R(\gradz\Dz(\lm,\ph) - R_{\al}P\gradz\Dz(\lm,\ph))
%   = \partial_{x}(\gradz\Dz(\lm,\ph) + R_{\al}P\gradz\Dz(\lm,\ph)).\fish
%\]

%
\item
$\partial_{x}\partial \Dl(\lm,\ph) - 2\lm R\partial \Dl(\lm,\ph) = \xi(\lm,\ph) J\ph$ where the function
\[
  \xi(x,\lm,\ph) \defl \p*{(\grave{m}_{1}(\lm,\ph) - \grave{m}_{4}(\lm,\ph))
   +
   2\ii \int_{0}^{x}(R\ph \cdot \partial \Dl(\lm,\ph))\,\dy}
\]
is $1$-periodic in $x$ and satisfies $\xi(x,\lm,\ph) = -\xi(x,-\lm,P\ph) = -\xi(1-x,-\lm,T\ph) = \xi(x,\lm,R_{\al}\ph)$ for all $x\in \R$, $\lm\in\C$, and $\al\in\R$.\fish
\end{equivenum}

\end{thm}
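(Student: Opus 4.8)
The plan is to derive everything from the transformation behaviour of the fundamental solution $\Mz(x,\lm,\ph)$ of $\Lz(\ph)\Mz=\lm\Mz$, $\Mz(0)=I$, under the three symmetries, and then to read off (i)–(v) in order. Writing the system as $\partial_x\Mz=W\Mz$ with $W=\smat{-\ii\lm & \ii\phm\\ -\ii\php & \ii\lm}$, one checks for $R_\al$ that $\Lz(R_\al\ph)=R_{\al/2}\Lz(\ph)R_{\al/2}^{-1}$, so by uniqueness $\Mz(x,\lm,R_\al\ph)=R_{\al/2}\Mz(x,\lm,\ph)R_{\al/2}^{-1}$; for $P$ that $f\mapsto Jf$ carries solutions of $\Lz(\ph)f=\lm f$ to solutions of $\Lz(\cph)(Jf)=-\lm(Jf)$, giving $\Mz(x,-\lm,\cph)=J\Mz(x,\lm,\ph)J^{-1}$; and for $T$ that combining $x\mapsto 1-x$ with conjugation by $\sigma\defl\smat{1 & 0\\ 0 & -1}$ yields $\Mz(1,-\lm,T\ph)=\sigma\Mz(1,\lm,\ph)^{-1}\sigma^{-1}$. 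Taking traces at $x=1$ and using $\operatorname{tr}A=\operatorname{tr}A^{-1}$ for $A\in SL(2,\C)$ gives (i); the spectral and gap-length statements then follow from the product representation \eqref{Dz-sq} together with the lexicographic ordering.

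Item (ii) is just $\partial_\lm$ of (i). For (iii) I differentiate the scalar identities of (i) in $\ph$ (legitimate since $\Dz$ is analytic in $\ph$): the $\Lc^2$-gradient is defined via the bilinear pairing $\int_\T(\gradz_{\phm}F\,v_1+\gradz_{\php}F\,v_2)\dx$, and $P$, $R_\al$, $T$ are each symmetric for this pairing ($T$ because $x\mapsto 1-x$ preserves $\int_\T$), so the chain rule turns $\Dz(\lm,\ph)=\Dz(-\lm,\cph)$ into $\gradz\Dz(\lm,\ph)=P\gradz\Dz(-\lm,\cph)$, and similarly for $R_\al$ and $T$. For (iv) I square (i): the radicands agree, so the canonical roots agree up to a locally constant sign. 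I fix it on the totally real slice $\Hc_{r}^{0}$. Under $R_\al$ the periodic spectrum is unchanged by (i), so \eqref{c-nls} is imposed on the same band and the sign is $+$; under $P$ and $T$ the spectrum is reflected through the origin, so $\lm\mapsto-\lm$ carries the normalizing band $(\lm_0^+,\lm_1^-)$ of $\ph$ into the adjacent band of $\cph$ (resp.\ $T\ph$), crossing exactly one gap, across which the canonical root flips sign — giving $-$. Analytic continuation then propagates each identity from $\Hc_{r}^{0}$ to all of $\Wz$.

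The substance is (v). I first produce the explicit gradient from the variation-of-parameters formula for $\partial_\ph\Mz(1)$, using $\partial_\ph W[v]=\smat{0 & \ii v_1\\ -\ii v_2 & 0}$: writing $\grave m_j\defl m_j(1,\lm,\ph)$ and $m_j=m_j(x,\lm,\ph)$, one reads off
\begin{align*}
  \gradz_{\phm}\Dz &= \ii\bigl[(\grave m_1-\grave m_4)m_3m_4-\grave m_2 m_3^2+\grave m_3 m_4^2\bigr],\\
  \gradz_{\php}\Dz &= \ii\bigl[(\grave m_1-\grave m_4)m_1m_2-\grave m_2 m_1^2+\grave m_3 m_2^2\bigr].
\end{align*}
Differentiating these in $x$ and substituting $\partial_x m_j$ from $\partial_x\Mz=W\Mz$, the terms carrying a factor $\lm$ reassemble into $2\lm R\gradz\Dz$, and the remainder factors as $\xi\,J\ph$ with
\[
  \xi=(\grave m_1-\grave m_4)(m_1m_4+m_2m_3)-2\grave m_2 m_1m_3+2\grave m_3 m_2m_4.
\]
This is the differential identity. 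A further $x$-differentiation of this explicit $\xi$, again using $\partial_x\Mz=W\Mz$, shows $\partial_x\xi$ is a constant multiple of $R\ph\cdot\gradz\Dz$; since $\Mz(0)=I$ gives $\xi|_{x=0}=\grave m_1-\grave m_4$, integrating recovers the stated integral representation.

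It remains to treat the periodicity and symmetries of $\xi$. Periodicity reduces to $\int_0^1 R\ph\cdot\gradz\Dz\,\dx=0$, and this integral equals $\tfrac{d}{d\al}\Dz(\lm,R_\al\ph)\big|_{\al=0}$ (because $\tfrac{d}{d\al}R_\al\ph|_{\al=0}=R\ph$), which vanishes by the $R_\al$-invariance of (i). The three symmetry relations for $\xi$ then follow by inserting the monodromy rules above and the gradient rules (iii) into the explicit formula: the diagonal conjugation by $R_{\al/2}$ leaves both $\grave m_1-\grave m_4$ and the integrand $R\ph\cdot\gradz\Dz$ invariant, whereas the conjugations $J(\cdot)J^{-1}$ (for $P$) and $\sigma(\cdot)^{-1}\sigma$ (for $T$) interchange the diagonal monodromy entries and flip the integrand, producing the minus signs; in the $T$-case one additionally uses the vanishing integral to rewrite $\int_0^{1-x}$ as $-\int_0^x$. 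I expect the bookkeeping in the first $x$-differentiation — checking that the explicit quadratic $\xi$ emerges with exactly these coefficients — to be the main obstacle; once the differential identity and the explicit form of $\xi$ are secured, periodicity and the symmetries are short consequences of (i) and (iii).
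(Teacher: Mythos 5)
Your items (i)--(iii) follow the paper's route exactly: the three conjugation identities for the fundamental solution (the paper's Lemma~\ref{F-sol-sym}, with your $\sg\grave{M}^{-1}\sg^{-1}$ for $T$ agreeing with the paper's $PJ M(1-x)\grave{M}^{-1}J^{-1}P$ at $x=1$ since $PJ=-\sg$), followed by taking traces and differentiating in $\lm$ and in $\ph$ against the bilinear pairing, for which $P$, $R_{\al}$, $T$ are indeed symmetric. For (iv) both arguments reduce to a locally constant sign, but you pin it down differently: the paper deforms along the segment to $\ph=0$ and reads the sign off $\sqrt[c]{\Dz^2-4}\,\big|_{\ph=0}=-2\ii\sin\lm$, whereas you compare normalization bands, using that $\lm\mapsto-\lm$ sends the band $(\lm_0^+,\lm_1^-)$ of $\cph$ (resp.\ $T\ph$) to the band of $\ph$ adjacent across gap $0$, where the canonical root has the opposite sign by the product representation~\eqref{Dz-sq}; both are valid. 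The genuine divergence is in (v): the paper represents $\ii\gradz\Dz=\grave{m}_2\,f_+\star f_-$ via Floquet solutions and runs a star-product calculus (needing $\grave{m}_2\neq0$ for generic $\lm$ and the relation $\zeta_+\zeta_-=1$ for periodicity), while you work directly from the variation-of-parameters formula $\gradz_{\phm}\Dz=\ii\bigl[(\grave{m}_1-\grave{m}_4)m_3m_4-\grave{m}_2m_3^2+\grave{m}_3m_4^2\bigr]$, $\gradz_{\php}\Dz=\ii\bigl[(\grave{m}_1-\grave{m}_4)m_1m_2-\grave{m}_2m_1^2+\grave{m}_3m_2^2\bigr]$. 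I checked these formulas and the ensuing bookkeeping: substituting $\partial_x M=WM$ the $\lm$-terms do reassemble into $2\lm R\gradz\Dz$ and the remainder is exactly your quadratic $\xi$ times $J\ph$; moreover $\xi(1)=(\grave{m}_1-\grave{m}_4)\det\grave{M}=\xi(0)$ gives periodicity directly, and your identification of $\int_0^1 R\ph\cdot\gradz\Dz\,\dx$ with $\tfrac{\ddd}{\ddd\al}\Dz(\lm,R_{\al}\ph)\big|_{\al=0}=0$ is a cleaner periodicity argument than the paper's. Your route buys freedom from the genericity caveats and yields the symmetries of $\xi$ by direct substitution of (iii) and the monodromy rules (with the $\int_0^{1-x}\mapsto-\int_0^x$ conversion you describe). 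The one point to nail down is the sign of the constant in $\partial_x\xi=c\,(R\ph\cdot\gradz\Dz)$: computing from your explicit $\xi$ I find $\partial_x\xi=2\ii(\phm A-\php B)$ with $\gradz\Dz=(\ii A,\ii B)$, i.e.\ $c=-2\ii$ under the paper's conventions for $R$ and the dot product, so you should verify carefully that your integral representation matches the paper's stated one rather than its negative; this is precisely the bookkeeping risk you flag, and it does not affect the structure of the argument.
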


\emph{Fundamental solution.}
Since $\Dl(\lm,\ph)$ is the trace of the fundamental solution $\grave{M}(\lm,\ph) = M(x,\lm,\ph)\big|_{x=1}$, it follows from Lemma~\ref{F-sol-sym} that $\Dl(\lm,\ph) = \Dl(-\lm,\cph)$, $\Dl(\lm,R_{\al}\ph) = \Dl(\lm,\ph)$, and $\Dl(-\lm,T\ph) = \Dl(\lm,\ph)$.
Differentiating these identities with respect to $\lm$ and $\ph$ gives items (ii) and (iii) of Theorem~\ref{thm:sym-dl}.

Recalling from \eqref{Dz-sq} that the periodic spectrum is the zero set of $\Dz^{2}-4$, we conclude that
\[
  \lm \in \spec(\ph) \iff -\lm\in \spec(\cph) \iff -\lm\in \spec(T\ph) \iff \lm \in \spec(R_{\al}\ph).
\]
From the lexicographical ordering and the asymptotic behavior $\lm_{n}^{\pm} = n\pi + \ell_{n}^{2}$ we further infer that $\lm_{n}^{\pm}(\ph) = -\lm_{-n}^{\mp}(\cph) = -\lm_{-n}^{\mp}(T\ph) = \lm_{n}^{\pm}(R_{\al}\ph)$ for any $n\in\Z$.
This proves item (i) of Theorem~\ref{thm:sym-dl}.

\emph{Canonical root}. Clearly, $\Dz^{2}(\lm,\ph)-4 = \Dz^{2}(-\lm,\cph)-4 = \Dz^{2}(-\lm,T\ph)-4 = \Dz^{2}(\lm,R_{\al}\ph)-4$ on $\Wz$, hence
\[
  \sqrt[c]{\Dz^2(\lm,\ph)-4}
   = \vs_{P}\sqrt[c]{\Dz^2(-\lm,\cph)-4}
   = \vs_{T}\sqrt[c]{\Dz^2(-\lm,\cph)-4}
   = \vs_{R_{\al}}\sqrt[c]{\Dz^2(\lm,R_{\al}\ph)-4},
\]
where the signs $\vs_{P}$, $\vs_{T}$, and $\vs_{R_{\al}}$ have modulus one, are locally constant in $\ph$, and independent of $\lm$ as $\Dz^{2}(\lm)-4$ does not vanish on $\C\setminus\bigcup_{\gm_{n}\neq 0} \Gz_{n}$. The straight line connecting $\ph$ and the origin is compact in $W$, and further $\sqrt[c]{\Dz^2(\lm)-4}\big|_{\ph=0}=-2\ii\sin(\lm)$, hence $\vs_{P} = \vs_{T} \equiv -1$ and $\vs_{R_{\al}} = 1$. This proves (iv) of Theorem~\ref{thm:sym-dl}.

\emph{Gradient Symmetry.}
The gradient of $\Dz$ can be represented by the components of $M$ -- see \cite[Section 4]{Grebert:2014iq}. To further simplify notation we denote $\grave{M} \defl M\big|_{x=1}$. Let $\zeta_\pm$ denote the eigenvalues of $\grave{M}$, then for all $\lm$ with $\grave{m_2}\neq0$,
\begin{align}
  \label{grad-dl-m2}
  \ii\gradz\Dz = \grave{m_2}f_+\star f_-,\qquad
  f_\pm \defl M\mat[\bigg]{
    1 \\ 
    \frac{\zeta_\pm-\grave{m_1}}{\grave{m_2}}
    }
\end{align}
where $ \smat{h_1\\h_2} \star \smat{k_1\\k_2}=\smat{h_2k_2\\h_1k_1}$.
The function $\lm \mapsto \grave{m_2}(\lm,\ph)$ vanishes identically if and only if $\ph$ is the zero potential, thus for $\ph\neq 0$ we have $\grave{m_2}(\lm,\ph)\neq 0$ for generic $\lm$.
Denote $\Phi = \smat{ & \ph_{1} \\ \ph_{2} & }$, then we can write
$\partial_{x}f_{\pm} = R(\Phi-\lm)f_{\pm}$ as $f_{\pm}$ is a solution of $Lf_{\pm} = \lm f_{\pm}$. A straightforward computation shows $(Ra)\star b = a\star(Rb) = -R(a\star b)$ and 
\[
  (R\Phi a) \star b + a \star (R\Phi b) = \ii P\Phi((Ja)\star b + a\star (Jb))
\]
for any two vectors $a,b$. Consequently,
\begin{align*}
  \partial_{x}(f_{+}\star f_{-})
  &=(\partial_{x} f_{+}\star f_{-}) + (f_{+}\star \partial_{x} f_{-})
  \\&= (R(\Phi-\lm)f_{+}\star f_{-}) + (f_{+}\star R(\Phi-\lm)f_{-})
  \\&= 2\lm R(f_{+}\star f_{-}) + \ii P\Phi\bigl((J f_{+}\star f_{-}) + (f_{+}\star J f_{-})\bigr).
\end{align*}
We conclude from \eqref{grad-dl-m2} that for generic $\lm$
\[
  \partial_{x}\gradz\Dz(\lm,\ph) - 2\lm R \gradz \Dz(\lm,\ph) =
    P\Phi,\qquad \Pi = \grave{m_{2}}\bigl((J f_{+}\star f_{-}) + (f_{+}\star J f_{-})\bigr).
\]
%Note that $P\bigl((Ja)\star b + a\star (Jb)\bigr) = -\bigl((Ja)\star b + a\star (Jb)\bigr)$. Thus, if $P\ph = R_{\al}\ph$, for some $\al\in\R$, then $P(P\Phi) = P(P\Phi P)P = P R_{\al}\Phi P = R_{-\al} (P\Phi) P$ and hence for generic $\lm$
%\[
%  \partial_{x}P\gradz\Dz(\lm,\ph) + 2\lm R P\gradz \Dz(\lm,\ph)
%   = -R_{-\al}(\partial_{x}\gradz\Dz(\lm,\ph) - 2\lm R \gradz \Dz(\lm,\ph)).
%\]
%By continuity this identity extends to all $\lm\in\C$.
Note that the function $\Pi$ is one periodic since $f_{\pm}(1) = \zt_{\pm}f_{\pm}(0)$ and $\zt_{+}\zt_{-} = 1$. We proceed by computing the $x$-derivative of $\Pi$. To this end, we compute
\begin{align*}
  \partial_{x}(Jf_{\pm}\star f_{\mp}) 
%  &= (J\partial_{x}f_{\pm}\star f_{\mp}) + (Jf_{\pm}\star \partial_{x}f_{\mp})\\
  &= (JR(\Phi-\lm)f_{\pm}\star f_{\mp}) + (Jf_{\pm}\star R(\Phi-\lm)f_{\mp})\\
  &= (JR\Phi f_{\pm}\star f_{\mp}) + (Jf_{\pm}\star R\Phi f_{\mp})
      - \lm \p*{ (JR f_{\pm}\star f_{\mp}) + (Jf_{\pm}\star R f_{\mp}) }\\
  &= (JR\Phi f_{\pm}\star f_{\mp}) + (Jf_{\pm}\star R\Phi f_{\mp}),
\end{align*}
where we used that $JRa\star b + Ja\star Rb = 0$.
% Similarly
%\begin{align*}
%  \partial_{x}(f_{+}\star Jf_{-}) 
%  &= (\partial_{x}f_{+}\star Jf_{-}) + (f_{+}\star J\partial_{x}f_{-})\\
%  &= (R(\Phi-\lm)f_{+}\star Jf_{-}) + (f_{+}\star JR(\Phi-\lm)f_{-})\\
%  &= (R\Phi f_{+}\star Jf_{-}) + (f_{+}\star JR\Phi f_{-}) - \lm (R f_{+}\star Jf_{-}) -\lm (f_{+}\star JR f_{-})\\
%  &= (R\Phi f_{+}\star Jf_{-})  + (f_{+}\star JR\Phi f_{-})
%\end{align*}
Furthermore, note that $(JR\Phi a\star b) + (J a\star R\Phi b) = (JR\Phi b\star a) + (J b\star R\Phi a) = -((R\ph) \cdot (a\star b))e_{0}$ where $a\cdot b = a_{1}b_{1}+a_{2}b_{2}$ and $e_{0} = (1,-1)$. As a consequence,
\[
  \partial_{x}(Jf_{+}\star f_{-}  + f_{+}\star Jf_{-}) = -2((R\ph) \cdot (f_{+}\star f_{-}))e_{0},
\]
so that
\[
  \partial_{x}\Pi = 2 (P\Phi P - \Phi P) \partial \Dl
                = 2\ii (R\ph\cdot \partial \Dl)e_{0},\qquad
                e_{0} = \begin{pmatrix}1\\-1
                \end{pmatrix}.
\]
Here $a\cdot b \defl a_{1}b_{1} + a_{2}b_{2}$. Since $\Pi(0) = (\grave{m}_{1} - \grave{m}_{4})e_{0}$ we conclude
\[
  \Pi = \p*{(\grave{m}_{1} - \grave{m}_{4}) + 2\ii \int_{0}^{x}\p*{R\ph\cdot \partial \Dl}\,\dy} e_{0}.
\]
Note that $P\Phi e_{0} = J\ph$ so finally
\[
  \partial_{x}\partial \Dl - 2\lm R\partial \Dl
   = 
   \p*{(\grave{m}_{1} - \grave{m}_{4})
   +
   2\ii \int_{0}^{x}(R\ph \cdot \partial \Dl)\,\dy}J\ph.
\]
The properties of $\xi$ follow immediately from the properties of $\Dl$.
This completes the proof of Theorem~\ref{thm:sym-dl}.

\begin{cor}
\label{sym-F}
On $W$ for any $\lm\in \C\setminus\bigcup_{\gm_{n}\neq 0} \Gz_{n}(\ph)$,
\begin{align*}
  F(\lm,\ph)        &= \phantom{P\gradz\!\!\!} -F(-\lm,\cph)
                     = \phantom{T\gradz\!\!} -F(-\lm,T\ph)
                     = \phantom{R_{\al}\gradz}F(\lm,R_{\al}\ph) 
                     ,\\
  \gradz F(\lm,\ph) &= -P \gradz F(-\lm,P\ph)
                     = -T \gradz F(-\lm, T\ph)
                     = R_{\al}\gradz F(\lm,R_{\al}\ph)          
                     .\fish
\end{align*}
\end{cor}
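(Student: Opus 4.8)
The plan is to substitute the symmetries (i)--(iv) of Theorem~\ref{thm:sym-dl} directly into the defining formulas \eqref{Fz} and \eqref{grad-F} for $F$ and its gradient, handling the three transformations by a common template.

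For the gradient identities the argument is purely algebraic. Since $\gradz F = \gradz \Dz / \sqrt[c]{\Dz^{2}-4}$ by \eqref{grad-F}, I would evaluate the right-hand side at the transformed argument, inserting item (iii) in the numerator and item (iv) in the denominator. For the $P$-symmetry, item (iii) gives $\gradz\Dz(-\lm,P\ph) = P\gradz\Dz(\lm,\ph)$ (using $P^{2}=I$) while item (iv) gives $\sqrt[c]{\Dz^{2}(-\lm,P\ph)-4} = -\sqrt[c]{\Dz^{2}(\lm,\ph)-4}$; the two sign conventions combine to $\gradz F(-\lm,P\ph) = -P\gradz F(\lm,\ph)$, which is the claimed identity after multiplying by $-P$. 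The $T$-case is identical with $P$ replaced by $T$, and the $R_{\al}$-case uses $R_{\al}^{-1}=R_{-\al}$ together with the invariance of the canonical root under $R_{\al}$ from (iv), yielding $R_{\al}\gradz F(\lm,R_{\al}\ph)=\gradz F(\lm,\ph)$.

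For the scalar function $F$ the $R_{\al}$-case is immediate: by (i) the periodic spectrum is $R_{\al}$-invariant, so the base point $\lm_{0}^{+}$ in \eqref{Fz} is unchanged, and by (ii) and (iv) both $\dDz$ and the canonical root are $R_{\al}$-invariant, so the integrand and limits are unchanged and $F(\lm,R_{\al}\ph)=F(\lm,\ph)$. For the $P$- and $T$-symmetries I would perform the change of variables $z\mapsto -z$ in \eqref{Fz}. The crucial observation is that (ii) and (iv) each contribute one sign under $z\mapsto -z$, so the quotient $\dDz/\sqrt[c]{\Dz^{2}-4}$ is invariant as a differential, while the Jacobian $\dz=-\dw$ supplies the single overall minus sign. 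The base point is tracked using the spectral symmetry in (i): since $\lm_{0}^{\pm}(\ph)=-\lm_{0}^{\mp}(P\ph)$, the reflected lower limit $-\lm_{0}^{+}(P\ph)$ equals $\lm_{0}^{-}(\ph)$, and I would then invoke the symmetric form of \eqref{Fz} --- equivalently, the stated fact that $F$ may be computed with base point either $\lm_{0}^{-}$ or $\lm_{0}^{+}$ --- to conclude $F(-\lm,P\ph)=-F(\lm,\ph)$. The $T$-case runs verbatim.

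The only genuinely delicate points are the bookkeeping of the base point and the admissibility of the transformed path of integration: one must check that $z\mapsto -z$ carries a path avoiding the open gaps of $P\ph$ to a path avoiding the open gaps of $\ph$, which follows because $\Gz_{n}(P\ph)=-\Gz_{-n}(\ph)$ by (i), and analogously for $T$. Once this is in place, all three transformations reduce to the same sign-matching computation, and I expect no further obstacle beyond carefully aligning the endpoints and combining the signs contributed by (ii) and (iv).
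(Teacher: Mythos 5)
Your proposal is correct and follows essentially the same route as the paper: the scalar identities are obtained by the substitution $z\mapsto -z$ in \eqref{Fz}, with the signs from Theorem~\ref{thm:sym-dl} (ii) and (iv) cancelling against the Jacobian and the base point tracked via the spectral symmetry of item (i). The only (immaterial) difference is that the paper deduces the gradient identities by differentiating the scalar ones in $\ph$, whereas you read them off directly from \eqref{grad-F} combined with items (iii) and (iv); both arguments are equally valid.
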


\begin{proof}
Suppose $\ph\in \Wz$. In view of Theorem~\ref{thm:sym-dl},
\begin{align*}
  F(-\lm,\cph)
   &= \phantom{-}\int_{\lm_{0}^{-}(\cph)}^{-\lm} \frac{\dDz(z,\cph)}{\sqrt[c]{\Dz^{2}(z,\cph)-4}}\,\dz\\
   &= -\int_{\lm_{0}^{+}(\ph)}^{\lm} \frac{\dDz(z,\ph)}{\sqrt[c]{\Dz^{2}(z,\ph)-4}}\,\dz
          = -F(\lm,\ph).
\end{align*}
In a similar way, one verifies that $F(\lm,R_{\al}\ph) = -F(-\lm,T\ph) = F(\lm,\ph)$. The identity for the gradients follows by differentiation.\qed
\end{proof}

\begin{cor}
\label{sym-grad-sym-F}
\begin{equivenum}
\item
If $\cph = R_{\al}\ph$ for some $\al\in\R$, then for all $\lm\in\C$
\[
  \partial_{x}\partial F(\lm,\ph) - 2\lm R \partial F(\lm,\ph) = 
  \partial_{x}\partial F(-\lm,\ph) + 2\lm R \partial F(-\lm,\ph).
\]
\item
If $T\ph = \pm \ph$, then for all $\lm\in\C$,
\begin{align*}
  &\partial_{x}\gradz F(\lm,\ph) - 2\lm R \partial F(\lm,\ph)
  - 2\ii \int_{0}^{x} (R\ph\cdot \partial F(\lm,\ph))\,\dy\\
   &\quad = 
  \partial_{x}\gradz F(-\lm,\ph) + 2\lm R \partial F(-\lm,\ph)
  - 2\ii \int_{0}^{x} (R\ph\cdot \partial F(-\lm,\ph))\,\dy.\fish
\end{align*}
\end{equivenum}
\end{cor}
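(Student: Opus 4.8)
The plan is to derive both identities from the single relation in Theorem~\ref{thm:sym-dl}(v) by dividing through by the canonical root. Since $\gradz F=\gradz\Dl/\sqrt[c]{\Dl^{2}-4}$ by \eqref{grad-F} and the root $\sqrt[c]{\Dl^{2}(\lm,\ph)-4}$ does not depend on $x$, dividing the identity of (v) yields
\[
  \partial_{x}\gradz F(\lm,\ph)-2\lm R\,\gradz F(\lm,\ph)=\eta(x,\lm,\ph)\,J\ph,\qquad
  \eta(x,\lm,\ph)\defl\frac{\xi(x,\lm,\ph)}{\sqrt[c]{\Dl^{2}(\lm,\ph)-4}},
\]
valid wherever $\gradz F$ is defined. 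The key point is that the two sign flips produced by (v) and by the root rule in Theorem~\ref{thm:sym-dl}(iv) cancel, so that the scalar factor $\eta$ inherits the \emph{sign-free} symmetries
\[
  \eta(x,\lm,\ph)=\eta(x,-\lm,\cph)=\eta(1-x,-\lm,T\ph)=\eta(x,\lm,R_{\al}\ph),
\]
and, because $\xi$ is $1$-periodic in $x$, so is $\eta$.

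For (i) I would use that the left-hand side is $\eta(x,\lm,\ph)\,J\ph$ while, applying the first displayed identity with $\lm$ replaced by $-\lm$, the right-hand side equals $\eta(x,-\lm,\ph)\,J\ph$. Under the hypothesis $\cph=R_{\al}\ph$ the first and last of the symmetries of $\eta$ chain together, $\eta(x,\lm,\ph)=\eta(x,-\lm,\cph)=\eta(x,-\lm,R_{\al}\ph)=\eta(x,-\lm,\ph)$, which is exactly the asserted equality.

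For (ii) the role of the subtracted terms $2\ii\int_{0}^{x}(R\ph\cdot\gradz F)\,\dy$ is to cancel the $x$-dependent part of $\eta$: writing $\eta(x,\lm,\ph)=c(\lm,\ph)+2\ii\int_{0}^{x}(R\ph\cdot\gradz F(\lm,\ph))\,\dy$ with the $x$-constant factor $c(\lm,\ph)\defl\eta(0,\lm,\ph)=(\grave{m}_{1}-\grave{m}_{4})(\lm,\ph)/\sqrt[c]{\Dl^{2}(\lm,\ph)-4}$, both sides reduce to $c(\lm,\ph)\,J\ph$ and $c(-\lm,\ph)\,J\ph$, so the claim is $c(\lm,\ph)=c(-\lm,\ph)$ whenever $T\ph=\pm\ph$. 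Evaluating the $T$-symmetry $\eta(x,\lm,\ph)=\eta(1-x,-\lm,T\ph)$ at $x=0$ and using $1$-periodicity to replace $\eta(1,-\lm,T\ph)$ by $\eta(0,-\lm,T\ph)$ gives $c(\lm,\ph)=c(-\lm,T\ph)$. In the even case $T\ph=\ph$ this is the claim; in the odd case $T\ph=-\ph$ I would write $-\ph=R_{\pi}\ph$ and invoke the invariance of $c$ under $\ph\mapsto R_{\al}\ph$ — the diagonal entries $\grave{m}_{1},\grave{m}_{4}$ are unchanged under $R_{\al}$ by Lemma~\ref{F-sol-sym}, and the root is unchanged by Theorem~\ref{thm:sym-dl}(iv) — to conclude $c(-\lm,T\ph)=c(-\lm,-\ph)=c(-\lm,\ph)$.

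I expect the only real obstacle to be the sign bookkeeping: one must verify that the factor $-1$ arising in the $P$- and $T$-symmetries of $\xi$ in (v) is precisely annihilated by the factor $-1$ from the root in (iv), leaving $\eta$ with genuinely sign-free transformation laws, and one must use the periodicity of $\xi$ (equivalently $\int_{0}^{1}(R\ph\cdot\gradz F)\,\dy=0$) to turn the reflection $x\mapsto 1-x$ in the $T$-symmetry into the clean statement $c(\lm,\ph)=c(-\lm,T\ph)$ about the $x$-constant part. After these checks both identities follow at once.
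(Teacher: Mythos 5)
Your proposal is correct and follows essentially the same route as the paper: divide the identity of Theorem~\ref{thm:sym-dl}(v) by the canonical root, observe that the sign flips from (v) and from (iv) cancel so the scalar factor has sign-free symmetries, and for (ii) reduce to the $x$-independent part $(\grave{m}_{1}-\grave{m}_{4})/\sqrt[c]{\Dl^{2}-4}$, whose oddness in $\lm$ under $T\ph=\pm\ph$ is exactly what the paper invokes via Lemma~\ref{F-sol-sym}. Your extra care with the sign bookkeeping and with reducing the $T\ph=-\ph$ case to $R_{\pi}$ matches the intended argument.
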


\begin{proof}
Since $\gradz F(\lm) = \frac{\gradz \Dz(\lm)}{\sqrt[c]{\Dz^{2}(\lm)-4}}$, the first claim follows from Theorem~\ref{thm:sym-dl} using that $\cph = R_{\al}\ph$ implies $\xi(x,-\lm,\ph) = -\xi(x,\lm,\ph)$. The second claim follows in analogous fashion using that $T\ph = \pm\ph$ implies $\grave{m}_{1}(-\lm,\ph) - \grave{m}_{4}(-\lm,\ph) = -(\grave{m}_{1}(\lm,\ph) - \grave{m}_{4}(\lm,\ph))$.\qed
\end{proof}

% ===================================================================================================
\section{Symmetries of the \nls  actions and \nls  hamiltonians}
\label{s:fin}

In this section we obtain several symmetries of the gradients of the \nls action variables which are subsequently used to prove Theorem~\ref{thm:bnf} (iii), Theorem~\ref{thm:nls-sym}, Theorem~\ref{thm:ham-mkdv-nls}, and Theorem~\ref{thm:freq}. Recall from~\eqref{Ink} that for $\ph\in\Wz$
the $n$th \nls action on level $k\in\Z$ is given by
\[
  I_{n,k}(\ph) = -\frac {1}{\pi}\int_{\Gmz_n(\ph)}\lm^{k-1} \Fz(\lm,\ph)\,\dlm,
\]
where $\Gmz_{n}(\ph)$ denotes a sufficiently close circuit around $G_{n}(\ph)$.

\begin{lem}
\label{sym-grad-In}
\begin{equivenum}
\item
On $\Wz$ we have for any $k\ge 1$, any $n\in\Z$, and any $\al\in\R$,
\[
  \gradz I_{n,k}(\ph)
   = (-1)^{k-1}P\gradz I_{-n,k}(\cph) 
   = (-1)^{k-1}T\gradz I_{-n,k}(T\ph) 
   = R_{\al}\gradz I_{n,k}(R_{\al}\ph).
\]
\item
If $\cph = R_{\al}\ph$ for some $\al\in\R$, then for any $k\ge 1$ and any $n\in\Z$,
\[
  \partial_{x}\partial I_{-n,k-1} - 2R\partial I_{-n,k}
  =
  (-1)^{k+1}
  \p*{\partial_{x}\partial I_{n,k-1} - 2R\partial I_{n,k}}.
\]
\item
If $T\ph = \pm\ph$, then for any $k\ge 1$ and any $n\in\Z$,
\begin{align*}
  &\partial_{x}\partial I_{-n,k-1} - 2R\partial I_{-n,k}
  - 2\ii \int_{0}^{x} (R\ph \cdot \partial I_{-n,k-1})\\
  &\qquad= (-1)^{k+1}
  \p*{\partial_{x}\partial I_{n,k-1} - 2R\partial I_{n,k}
  - 2\ii \int_{0}^{x} (R\ph \cdot \partial I_{n,k-1})}.\fish
\end{align*}
\end{equivenum}
\end{lem}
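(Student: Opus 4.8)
The plan is to obtain all three items by differentiating the contour-integral definition of $I_{n,k}$ with respect to $\ph$ and feeding in the discriminant symmetries already established in Theorem~\ref{thm:sym-dl} and Corollaries~\ref{sym-F} and \ref{sym-grad-sym-F}. The starting point is the formula $I_{n,k}(\ph) = -\frac{1}{\pi}\int_{\Gmz_n(\ph)}\lm^{k-1}\Fz(\lm,\ph)\,\dlm$, together with $\gradz\Fz = \gradz\Dz/\sqrt[c]{\Dz^2-4}$ from \eqref{grad-F}. One subtle point is that the contour $\Gmz_n$ depends on $\ph$; but since the integrand is analytic in $\lm$ off the gaps and the action is independent of the precise choice of circuit, the contour can be held fixed under differentiation, so $\gradz I_{n,k}(\ph) = -\frac{1}{\pi}\int_{\Gmz_n}\lm^{k-1}\gradz\Fz(\lm,\ph)\,\dlm$.

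For item (i), I would substitute the gradient symmetries of $F$ from Corollary~\ref{sym-F}. Under $\ph\mapsto\cph$ one replaces $\lm$ by $-\lm$ and picks up the factor $-P$; the substitution $z=-\lm$ turns the circuit $\Gmz_n(\ph)$ into $\Gmz_{-n}(\cph)$ (by the spectral symmetry $\lm_n^\pm(\ph)=-\lm_{-n}^{\mp}(\cph)$ from Theorem~\ref{thm:sym-dl}(i)), contributes $(-1)^{k-1}$ from the factor $\lm^{k-1}=(-z)^{k-1}$, and a sign from reversing orientation that cancels the Jacobian sign. Collecting these gives $\gradz I_{n,k}(\ph)=(-1)^{k-1}P\gradz I_{-n,k}(\cph)$. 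The $T$-identity follows identically using the $T$-versions of the symmetries, and the $R_\al$-identity is the cleanest, since there $\lm$ is unchanged and only the conjugation factor $R_\al$ appears with no sign bookkeeping.

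For items (ii) and (iii) the idea is the same, but now I apply the operator identity from Corollary~\ref{sym-grad-sym-F} under the integral sign. Here the hypothesis $\cph=R_\al\ph$ (resp. $T\ph=\pm\ph$) collapses the four-fold symmetry into a relation between $\partial F(\lm,\ph)$ and $\partial F(-\lm,\ph)$ at the \emph{same} potential. Multiplying Corollary~\ref{sym-grad-sym-F} by $\lm^{k-1}$, integrating over $\Gmz_n$, and changing variables $\lm\mapsto-\lm$ converts the $-\lm$ integrals back into integrals of the form defining $I_{\pm n,k}$ and $I_{\pm n,k-1}$; the term $2\lm R\,\partial F$ raises the power by one and thus produces the level-$k$ action while $\partial_x\partial F$ produces the level-$(k-1)$ action, which explains the index shift in the stated combination $\partial_x\partial I_{-n,k-1}-2R\partial I_{-n,k}$. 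The sign $(-1)^{k+1}$ emerges from tracking $(-\lm)^{k-1}$ together with the orientation/Jacobian signs.

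The main obstacle I anticipate is the careful bookkeeping in items (ii) and (iii): one must verify that the substitution $\lm\mapsto-\lm$ sends $\Gmz_n$ precisely onto $\Gmz_{-n}$ with the correct orientation, that the integral of $2\ii\int_0^x(R\ph\cdot\partial\Dz)\,\dy$ weighted by $\lm^{k-1}$ indeed reassembles into $2\ii\int_0^x(R\ph\cdot\partial I_{n,k-1})$ (i.e. that the $x$-integration and the $\lm$-contour integration commute, which follows from Fubini and analyticity), and that the various signs coming from the power of $\lm$, the Jacobian, and the orientation combine to the single advertised sign $(-1)^{k+1}$. Once these sign computations are organized, all three items reduce to inserting the appropriate corollary and changing variables.
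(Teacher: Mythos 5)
Your plan follows the paper's proof essentially verbatim: item (i) comes from the substitution $\lm\mapsto-\lm$ on the contour together with Corollary~\ref{sym-F} (the paper first proves the scalar identities such as $I_{-n,k}(\cph)=(-1)^{k-1}I_{n,k}(\ph)$ and then differentiates, which is equivalent to your differentiating under the integral sign with the contour held fixed), and items (ii)--(iii) are obtained exactly as you describe by integrating Corollary~\ref{sym-grad-sym-F} against the circuit, using the symmetry of the spectrum to send $\Gm_{n}$ onto $\Gm_{-n}$, and tracking the sign $(-1)^{k+1}$. One small correction: the weight must be $\lm^{k-2}$ rather than $\lm^{k-1}$, so that $\partial_{x}\partial F$ reassembles into the level-$(k-1)$ action and $2\lm R\,\partial F$ into the level-$k$ action, consistent with your own description of which term lands on which level.
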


Item (i) for the case $k=1$ has been obtained in \cite{Grebert:2002wb}.

\begin{proof}
(i) By Theorem~\ref{thm:sym-dl}, $G_{n}(\ph) = -G_{-n}(\cph) = -G_{-n}(T\ph) = G_{n}(R_{\al}\ph)$ for any $n\in\Z$.
If, in addition $\ph\in \Wz$, then there exists a set of isolating neighborhoods $(U_{n})_{n\in\Z}$ which are mutually disjoint discs centered on the real axis such that $G_{n}\subset U_{n}$. They can be chosen such that $U_{n}(\ph) = -U_{-n}(\cph) = -U_{-n}(T\ph) = U_{n}(R_{\al}\ph)$ for all $n\in\Z$.
In particular, for any circuit $\Gmz_{n}(\ph)$ sufficiently close around $G_{n}(\ph)$, its inversion at the origin, $-\Gmz_{n}(\ph)$, defines a circuit around $\Gz_{-n}(\cph)$ of the same orientation as $\Gmz_{n}(\ph)$. Thus, with the substitution $\lm\mapsto -\lm$,
\begin{align*}
  I_{-n,k}(\cph) &= -\frac{1}{\pi}\int_{-\Gmz_{n}(\ph)} \lm^{k-1} F(\lm,\cph)\,\dlm\\
           &= \phantom{-}\frac{1}{\pi}\int_{\Gmz_{n}} (-\lm)^{k-1} F(-\lm,\cph)\,\dlm
           = (-1)^{k-1} I_{n,k}(\ph),
\end{align*}
where we used that $F(-\lm,\cph) = -F(\lm,\ph)$ by Corollary~\ref{sym-F}. Similarly, using
$F(-\lm,T\ph) = -F(\lm,\ph)$ one shows that $I_{-n,k}(T\ph) = I_{n,k}(\ph)$, and using
 $F(\lm,R_{\al}\ph) = F(\lm,\ph)$ one shows that $I_{n,k}(R_{\al}\ph) = I_{n,k}(\ph)$. Differentiating these identities gives
\begin{equation}
  \label{grad-In-sym}
  \gradz I_{n,k}(\ph) = (-1)^{k-1}P\gradz I_{-n,k}(\cph) = R_{\al}\gradz I_{n,k}(R_{\al}\ph).
\end{equation}

(ii) If $P\ph = R_{\al}\ph$, the periodic spectrum of $\ph$ is symmetric, and one has by Corollary~\ref{sym-grad-sym-F}
\begin{align*}
  \partial_{x}\partial I_{-n,k-1} - 2R\partial I_{-n,k}
  &= -\p*{
  -\frac{1}{\pi} \int_{\Gm_{n}} (-\lm)^{k-2} \p[\Big]{\partial_{x}\partial F(-\lm)
  + 2\lm R \partial F(-\lm)
  } \,\dlm
  }\\
  &= (-1)^{k+1}\p*{
  -\frac{1}{\pi} \int_{\Gm_{n}} \lm^{k-2} \p[\Big]{\partial_{x}\partial F(\lm)
  - 2\lm R \partial F(\lm)
  } \,\dlm
  }\\
  &= (-1)^{k+1}
  \p*{\partial_{x}\partial I_{n,k} - 2R\partial I_{n,k+1}}.
\end{align*}

(iii) If $T\ph = \pm\ph$, the periodic spectrum of $\ph$ is symmetric, and one has
\begin{align*}
  &\partial_{x}\partial I_{-n,k-1} - 2R\partial I_{-n,k}
    - 2\ii \int_{0}^{x} (R\ph \cdot \partial I_{-n,k-1})\\
   &\qquad = -\p*{
  -\frac{1}{\pi} \int_{\Gm_{n}} (-\lm)^{k-2} \p[\Big]{\partial_{x}\partial F(-\lm)
  + 2\lm R \partial F(-\lm)
  - 2\ii \int_{0}^{x} (R\ph \cdot \partial F(-\lm))
  } \,\dlm
  }\\
  &\qquad = (-1)^{k+1}
  \p*{\partial_{x}\partial I_{n,k-1} - 2R\partial I_{n,k}
  - 2\ii \int_{0}^{x} (R\ph \cdot \partial I_{n,k-1})}.\qed
\end{align*}
\end{proof}

\begin{proof}[Proof of Theorem~\ref{thm:bnf} (iii).]
Suppose $\ph_{u} \in \Wz$. Using that $PR = -\ii J$ we obtain from Lemma~\ref{sym-grad-In} (ii), applied in the case $k=1$,
\[
  -\ii J (\gradz I_{n} - \gradz I_{-n})\big|_{\ph_{u}}
   = \frac{1}{2} \partial_{x} P(\gradz I_{n,0} + P \gradz I_{n,0})\big|_{\ph_{u}}.
\]
Recall that $f^{\rest}(u) \defl f(\ph_{u})$ for any $C^{1}$-functional $f$ on $\Hc_{c}^{0}$, and
\[
  \gradh_{u} f^{\rest} = (\gradz f) \cdot (1,1)^{\top}\big|_{\ph_{u}} = (\gradz_{1} f + \gradz_{2} f)\big|_{\ph_{u}}.
\]
In particular,
\[
  -\ii J (\gradz I_{n} - \gradz I_{-n})\big|_{\ph_{u}}
   = \frac{1}{2}\partial_{x}\gradh_{u} I_{n,0}^{\rest} (1,1)^{\top},
\]
and in general for any $f$,
\begin{align*}
  \nbr{f,I_{n}-I_{-n}}\Big|_{\ph_{u}}
  &= -\ii\int_{\T} (\gradz f) \cdot J (\gradz I_{n,1} - \gradz I_{-n,1})\,\dx\bigg|_{\ph_{u}} \\
  &= \frac{1}{2}\int_{\T} (\gradh_{u} f^{\rest})\partial_{x}\gradh_{u} I_{n,0}^{\rest}\,\dx\bigg|_{u} \\
  &= \frac{1}{2}\gbr{f^{\rest},I_{n,0}^{\rest}} \bigg|_{u}
   = \gbr{f^{\rest},J_{n}} \bigg|_{u},
\end{align*}
where we used the identity $I_{n,0}^{\rest} = 2J_{n}$ from Lemma~\ref{comp-In-Jn} in the last step. 
Consequently, for $n,m\ge 1$,
\[
  \gbr{\th_{m}^{\rest},J_{n}}\Big|_{u} = \nbr{\th_{m},I_{n}-I_{-n}}\Big|_{\ph_{u}} = \dl_{m,n}.\qed
\]
\end{proof}

The next result implies Theorem~\ref{thm:nls-sym}.

\begin{prop}
\begin{equivenum}
\item
For every $\ph\in \Hc_{c}^{k-1}$ with $k\ge 1$, and any real $\al$,
\begin{align*}
  \gradz\Hz_{k}(\ph) &= (-1)^{k-1}P\gradz\Hz_{k}(\cph)
  = (-1)^{k-1}T\gradz\Hz_{k}(T\ph)
  = R_{\al}\gradz\Hz_{k}(R_{\al}\ph).
\end{align*}
%In particular $X_{\Hz_{k}}(R_{\al}\ph) = R_{\al} X_{\Hz_{k}}(\ph)$ and $X_{\Hz_{k}}(P\ph) = (-1)^{k}PX_{\Hz_{k}}(\ph)$.
\item
If $\ph\in \Hc_{c}^{2m-1}$, $m\ge 1$, with $\cph = R_{\al}\ph$ for some real $\al$, then
\[
  -\ii J \gradz \Hz_{2m}(\ph) = \partial_{x} \gradz \Hz_{2m-1}(\ph).
\]
\item
If $\ph\in \Hc_{c}^{2m-1}$, $m\ge 1$, with $T\ph = \pm\ph$, then
\[
  R(\gradz \Hz_{2m})
  =
  \partial_{x} (\gradz \Hz_{2m-1})
    - 2\ii \int_{0}^{x} (R\ph \cdot \partial \Hz_{2m-1})
  .\fish
\]
\end{equivenum}
\end{prop}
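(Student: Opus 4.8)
The plan is to recover $\gradz\Hz_k$ as the asymptotic coefficients of $\gradz\Fz$ at $\lm=\infty$ and to push the symmetries of $\Fz$ established in Corollaries~\ref{sym-F} and~\ref{sym-grad-sym-F} through this expansion. Differentiating~\eqref{exp-nls} in $\ph$ yields, for a finite gap potential, $\gradz\Fz(\lm,\ph)=\ii\sum_{k\ge 1}\gradz\Hz_k(\ph)\,(2\lm)^{-k}$, with the series and its $\ph$-derivative converging uniformly on bounded subsets of the relevant Sobolev space; all the identities below are then obtained by matching coefficients of equal powers of $(2\lm)^{-1}$. Since finite gap potentials are dense and each $\gradz\Hz_k$ is analytic on $\Hc_c^{k-1}$, the identities extend from finite gap potentials to all of $\Hc_c^{k-1}$ exactly as in Proposition~\ref{ham-id-mkdv-nls}, via Lemma~\ref{ana-vanish}.

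For item~(i) I would insert the expansion into the three gradient symmetries of Corollary~\ref{sym-F}, $\gradz\Fz(\lm,\ph)=-P\gradz\Fz(-\lm,\cph)=-T\gradz\Fz(-\lm,T\ph)=R_{\al}\gradz\Fz(\lm,R_{\al}\ph)$. In the first two relations the reflection $\lm\mapsto-\lm$ sends $(2\lm)^{-k}$ to $(-1)^{k}(2\lm)^{-k}$, which together with the leading $-P$ (resp.\ $-T$) produces exactly the factor $(-1)^{k-1}$; the $R_{\al}$ relation carries no reflection and hence gives the untwisted identity. Matching the coefficient of $(2\lm)^{-k}$ then yields all three equalities of~(i) simultaneously.

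For items~(ii) and~(iii) I would feed the expansion into Corollary~\ref{sym-grad-sym-F}. Under $\cph=R_{\al}\ph$ the first identity of that corollary reads $\partial_{x}\gradz\Fz(\lm)-2\lm R\,\gradz\Fz(\lm)=\partial_{x}\gradz\Fz(-\lm)+2\lm R\,\gradz\Fz(-\lm)$; the factor $2\lm$ shifts the index by one, so the coefficient of $(2\lm)^{-(2m-1)}$ collapses to $R\gradz\Hz_{2m}=\partial_{x}\gradz\Hz_{2m-1}$ (even indices give only tautologies). It then remains to turn $R$ into $-\ii J$: here I would invoke the operator identity $PR=-\ii J$ together with the $P$-equivariance of item~(i) — for $k=2m-1$ the sign $(-1)^{k-1}=1$, so $\gradz\Hz_{2m-1}$ is $P$-equivariant, while the vanishing $\Hz_{2m}^{\rest}=0$ from Proposition~\ref{ham-id-mkdv-nls} forces the two components of $\gradz\Hz_{2m}$ to sum to zero at $\ph=(u,u)$ — which is precisely what converts $R\gradz\Hz_{2m}$ into $-\ii J\gradz\Hz_{2m}$ and proves~(ii). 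Item~(iii) is identical except that it starts from the second identity of Corollary~\ref{sym-grad-sym-F} under $T\ph=\pm\ph$: the two derivative terms behave as before, and the additional term $-2\ii\int_0^x(R\ph\cdot\gradz\Fz)$ (which carries the vector factor $J\ph$, as in Theorem~\ref{thm:sym-dl}(v)) contributes at order $(2\lm)^{-(2m-1)}$ exactly the integral term $-2\ii\int_0^x(R\ph\cdot\gradz\Hz_{2m-1})$ appearing in the statement, so that $R$ is retained on the left.

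The routine part is the coefficient matching. I expect the main obstacle to be the linear-algebra step in~(ii) that replaces the operator $R$ — produced by the reflection $\lm\mapsto-\lm$ — by the Hamiltonian operator $-\ii J$. This rests on the interplay $PR=-\ii J$ and $R_{\al}J=JR_{-\al}$ together with the $P$- and $R_{\al}$-equivariances from~(i), and it is exactly here that the structural hypotheses $\cph=R_{\al}\ph$ and $T\ph=\pm\ph$ are consumed; for the diagonal potentials $\ph=(u,u)$ used in Theorem~\ref{thm:ham-mkdv-nls} this conversion is immediate. A secondary technical point is justifying the term-by-term $\ph$-differentiation of the asymptotic series and the density/analyticity passage to the full Sobolev spaces.
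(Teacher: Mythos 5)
Your overall architecture---deriving the symmetries of $\gradz\Hz_{k}$ from the symmetries of $\Fz$ and $\gradz\Fz$ in Corollaries~\ref{sym-F} and~\ref{sym-grad-sym-F}, then extending by analyticity via Lemma~\ref{ana-vanish}---matches the paper's, but you extract the Hamiltonians from $\Fz$ by a different mechanism: coefficient matching in the expansion of $\Fz$ at $\lm=\infty$, rather than the paper's route through the actions. The paper integrates $\gradz\Fz$ over the compact contours $\Gmz_{n}$ to obtain the symmetries of the action gradients $\gradz I_{n,k}$ (Lemma~\ref{sym-grad-In}) and then sums over $n$ using the trace formula $\sum_{n\in\Z}\gradz I_{n,k}=\tfrac{1}{2^{k-1}}\gradz\Hz_{k}$, whose locally uniform convergence is imported from the literature. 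That detour is not cosmetic: differentiating under a compact contour integral is harmless, and it is exactly the step your shortcut leaves unjustified.

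The genuine gap is the assertion that $\gradz\Fz(\lm,\ph)=\ii\sum_{k\ge1}\gradz\Hz_{k}(\ph)\,(2\lm)^{-k}$ follows by ``differentiating~\eqref{exp-nls} in $\ph$''. The expansion~\eqref{exp-nls} is stated only for finite gap potentials, which form a dense but \emph{non-open} subset of the phase space; the $L^{2}$-gradient requires varying $\ph$ in arbitrary directions, and a generic perturbation of a finite gap potential is not finite gap, so an identity valid only on a non-open set yields no information about gradients. For general $\ph$ one only has a finite-order asymptotic expansion of $\Fz$ with Sobolev-dependent error terms, and a corresponding expansion of $\gradz\Fz$ with uniform error control would have to be established separately---it is not available in the paper, and this missing analytic input is precisely what the cited trace formula for gradients supplies. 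The remainder of your argument is sound: the sign bookkeeping under $\lm\mapsto-\lm$, the index shift produced by the factor $2\lm$ in Corollary~\ref{sym-grad-sym-F}, the origin of the integral term in (iii), the two-step extension (density on $\Hc_{r}^{k-1}$, then Lemma~\ref{ana-vanish}), and the conversion of $R$ into $-\ii J$ at diagonal potentials via $\Hz_{2m}^{\rest}=0$ from Proposition~\ref{ham-id-mkdv-nls}. On that last point, note that the paper's own proof of (ii) likewise only establishes $R\gradz\Hz_{2m}=\partial_{x}\gradz\Hz_{2m-1}$ and performs the passage to $-\ii J$ only where $\cph=\ph$, which is all that Theorem~\ref{thm:ham-mkdv-nls} requires; your explicit acknowledgement of this is consistent with the paper.
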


\begin{rem}
One verifies by direct computation that generically item (ii) and (iii) do not hold when $2m$ is replaced by $2m+1$ if $m\ge 2$.\map
\end{rem}

\begin{proof}
(i) On $\Wz\cap\Hc_{c}^{k-1}$ the sum $\sum_{n\in\Z} I_{n,k}$ converges locally uniformly to an analytic function  -- see \cite[Section 13]{Grebert:2014iq} -- and satisfies for $k\ge 1$
\[
  \sum_{n\in\Z} I_{n,k} = \frac{1}{2^{k-1}} \Hz_{k}.
\]
Since $\gradz I_{n,k}(\ph) = (-1)^{k-1}P\gradz I_{-n,k}(\cph) = (-1)^{k-1}T\gradz I_{-n,k}(T\ph) = R_{\al}\gradz I_{n,k}(R_{\al}\ph)$ by Lemma~\ref{sym-grad-In} (i), the first identity of Theorem~\ref{thm:nls-sym}, $\gradz\Hz_{k}(\ph) = (-1)^{k-1}P\gradz\Hz_{k}(\cph) = (-1)^{k-1}T\gradz\Hz_{k}(T\ph) = R_{\al}\gradz\Hz_{k}(R_{\al}\ph)$, follows for $\ph\in \Hc_{r}^{k-1}$. Since the Hamiltonians are analytic on $\Hc_{c}^{k-1}$, the identity extends to $\Hc_{c}^{k-1}$ by Lemma~\ref{ana-vanish}.

(ii) Suppose $\ph\in \Hc_{r}^{2k-1}$ with $\cph = R_{\al}\ph$ for some $\al\in\R$. Summing identity (ii) of Lemma~\ref{sym-grad-In} over $n\in\Z$ yields
\begin{align*}
  \frac{1}{2^{2m-1}} 2R(\gradz \Hz_{2m})
   &= \sum_{n\in\Z} 2R (\gradz I_{n,2m})
   = \sum_{n\in\Z} \partial_{x} (\gradz I_{n,2m-1})
    = \frac{1}{2^{2m-2}} \partial_{x} (\gradz \Hz_{2m-1}).
\end{align*}
Since both sides are analytic on $\Hc_{c}^{2k-1}$, the identities extend by Lemma~\ref{ana-vanish}. 

(iii) Similarly as for the previous item one obtains provided $T\ph = \pm \ph$,
\begin{align*}
  \frac{1}{2^{2m-1}} 2R(\gradz \Hz_{2m})
   &= \sum_{n\in\Z} 2R (\gradz I_{n,2m})
    = \sum_{n\in\Z} \p*{\partial_{x} (\gradz I_{n,2m-1})
    - 2\ii \int_{0}^{x} (R\ph \cdot \partial I_{n,2m-1})}\\
    &= \frac{1}{2^{2m-2}}\p*{\partial_{x} (\gradz \Hz_{2m-1})
    - 2\ii \int_{0}^{x} (R\ph \cdot \partial \Hz_{2m-1})}.\qed
\end{align*}
\end{proof}

\begin{proof}[Proof of Theorem~\ref{thm:ham-mkdv-nls}.]
At any point $\ph_{u}\in \Hc_{c}^{2m-1}$ we have by Theorem~\ref{thm:nls-sym} (ii)
\[
  X_{\Hz_{2m}}
   = -\ii J (\gradz \Hz_{2m})
   = \ii JR R(\gradz \Hz_{2m})
   = \partial_{x} P(\gradz \Hz_{2m-1}),\quad m\ge 1,
\]
where we used that $\ii JR = P$. Furthermore, $(\gradz \Hz_{2m-1}) = P (\gradz \Hz_{2m-1})$ by Theorem~\ref{thm:nls-sym} (i), hence
\[
  X_{\Hz_{2m}}^{\rest}
   = \frac{1}{2} (\partial_{x}(\gradz_{1}\Hz_{2m-1})^{\rest}
    + \partial_{x}(\gradz_{2}\Hz_{2m-1})^{\rest})(1,1)^{\top}
   = \frac{1}{2}(Y_{\Hz_{2m-1}^{\rest}},Y_{\Hz_{2m-1}^{\rest}})^{\top}.
\]
Since $\frac{1}{2}\Hz_{2m-1}^{\rest} = Y_{\Hh_{m}}$ by Proposition~\ref{ham-id-mkdv-nls}, it now follows that
\[
  X_{\Hz_{2m}}^{\rest} = (Y_{\Hh_{m}},Y_{\Hh_{m}}),\qquad m\ge 1.\qed
\]
\end{proof}

\begin{proof}[Proof of Theorem~\ref{thm:freq}.]
Suppose $J_{n}\neq 0$ and let $\phi_{t}$ be a local flow for the vector field $Y_{\Hh_{m}}$, then
\[
  \eta_{n,m} = -\gbr{\vt_{n},\Hh_{m}} = -\ddt\bigg|_{t=0} \vt_{n}\circ\phi_{t}.
\]
Since $\vt_{n}+\th_{n}^{\rest}$ is a function of the \mkdv actions only, we have
\begin{align*}
  -\ddt\bigg|_{t=0} \vt_{n}\circ\phi_{t} &= \ddt\bigg|_{t=0} \th_{n}^{\rest}\circ\phi_{t}.
\end{align*}
As $X_{\Hz_{2m}}^{\rest} = Y_{\Hh_{m}}$ by Theorem~\ref{thm:ham-mkdv-nls}, $\phi_{t}$ is also a local flow for the vector field $X_{\Hz_{2m}}^{\rest}$ and hence
\[
  \ddt\bigg|_{t=0} \th_{n}^{\rest}\circ\phi_{t} = \nbr{\th_{n},\Hz_{2m}} = \om_{n,2m}.\qed
\]
\end{proof}

\begin{appendix}

\section{Symmetries of the ZS fundamental solution}

Let $M(x,\lm,\ph) = \smat{m_{1} & m_{2}\\ m_{3} & m_{4}}$ denote the fundamental solution of the Zakharov-Shabat operator $L(\ph) = \smat{\ii & 0 \\ 0 & -\ii}\ddx + \smat{0 & \phm \\ \php & 0}$.
The following symmetries under the transformations $P$, $R_{\al}$, and $T$, introduced in Section~\ref{s:symmetries} have been noted in \cite{Grebert:2002wb}.

\begin{lem}
\label{F-sol-sym}
For any $x\in\R$, $\lm\in \C$, and $\ph\in \Hc_{r}^{0}$,
\begin{align*}
  \Mz(x,-\lm,\cph)
    &= \Jt \Mz(x,\lm,\ph)\Jt^{-1}
    = \mat[\bigg]{\phantom{-}m_4(x,\lm,\ph) & -m_3(x,\lm,\ph)\\ -m_2(x,\lm,\ph) & \phantom{-}m_1(x,\lm,\ph)},\\
    \Mz(x,\lm,R_{\al}\ph)
    &= R_{\al/2} \Mz(x,\lm,\ph)R_{\al/2}^{-1}
    = \mat[\bigg]{\phantom{\e^{-\ii\al}}m_1(x,\lm,\ph) & \e^{\ii\al}m_2(x,\lm,\ph)\\ \e^{-\ii\al}m_3(x,\lm,\ph) & \phantom{\e^{\ii\al}}m_4(x,\lm,\ph)},\\
    M(x,-\lm, T \ph) &= PJM(1-x,\lm,\ph)\grave{M}(\lm,\ph)^{-1}J^{-1}P.
\end{align*}
In particular,
\[
  \grave{M}(-\lm, T\ph) = \mat[\bigg]{\grave{m_4}(\lm,\ph) & \grave{m_2}(\lm,\ph)\\ \grave{m_3}(\lm,\ph) & \grave{m_1}(\lm,\ph)}.
\]
\end{lem}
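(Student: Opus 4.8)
\textit{Proof strategy.} The plan is to establish all three identities by a single device: recast the eigenvalue equation $\Lz(\ph)\Mz = \lm \Mz$ as a first-order linear system and then check that each claimed right-hand side solves the \emph{same} initial value problem as the left-hand side, so that uniqueness for linear ODEs forces equality. Writing $\Mz(0,\lm,\ph) = \mathrm{Id}$, the equation takes the form $\partial_x \Mz = Q(x,\lm,\ph)\Mz$ with
\[
  Q(x,\lm,\ph) = \smat{-\ii\lm & \ii\phm \\ -\ii\php & \ii\lm}.
\]
Thus each symmetry reduces to a matching identity for the coefficient matrix $Q$ under the relevant transformation, together with agreement at $x=0$.

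For $P$ and $R_{\al}$ this is a direct conjugation. First I would set $\tilde\Mz(x)\defl J\Mz(x,\lm,\ph)J^{-1}$, note $\tilde\Mz(0)=\mathrm{Id}$, and compute $\partial_x\tilde\Mz = (J\,Q(x,\lm,\ph)\,J^{-1})\tilde\Mz$. A short calculation gives $J\,Q(x,\lm,\ph)\,J^{-1} = Q(x,-\lm,\cph)$, since conjugation by $J$ interchanges the two diagonal entries (equivalently $\lm\mapsto-\lm$) and interchanges $\phm\leftrightarrow\php$ (which is exactly $\ph\mapsto\cph$). Hence $\tilde\Mz(x)$ and $\Mz(x,-\lm,\cph)$ solve the same system with the same initial value, giving the first line; the entrywise form $\smat{m_4 & -m_3\\ -m_2 & m_1}$ then follows by carrying out the conjugation on $\smat{m_1 & m_2\\ m_3 & m_4}$. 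The $R_{\al}$ line is identical with $J$ replaced by $R_{\al/2}$: since $R_{\al/2}Q(x,\lm,\ph)R_{\al/2}^{-1}$ leaves the diagonal fixed and multiplies the off-diagonal entries by $\e^{\pm\ii\al}$, it equals $Q(x,\lm,R_{\al}\ph)$.

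The reflection $T$ is the step I expect to be the main obstacle, since $x\mapsto 1-x$ reverses the independent variable and hence the direction of the flow. The plan is to introduce $N(x)\defl\Mz(1-x,\lm,\ph)$, which satisfies $N(0)=\grave M$, $N(1)=\mathrm{Id}$, and, using $\ph_i(1-x)=(T\ph)_i(x)$, the reversed equation $\partial_x N = -Q(x,\lm,T\ph)N$. Composing on the right with $\grave M^{-1}$ repairs the initial condition, and conjugating by $PJ = -\smat{1 & 0\\ 0 & -1}$ converts $-Q(x,\lm,T\ph)$ into $Q(x,-\lm,T\ph)$, because conjugation by $PJ$ merely reverses the sign of the off-diagonal entries. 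Thus $\tilde N(x)\defl PJ\,N(x)\,\grave M^{-1}J^{-1}P$ solves $\partial_x\tilde N = Q(x,-\lm,T\ph)\tilde N$ with $\tilde N(0) = PJ\,\grave M\,\grave M^{-1}J^{-1}P = \mathrm{Id}$ (using $J^{-1}P=(PJ)^{-1}$), so by uniqueness $\tilde N(x) = \Mz(x,-\lm,T\ph)$, which is the third identity.

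Finally, the explicit form of $\grave M(-\lm,T\ph)$ follows by evaluating the third identity at $x=1$, where $N(1)=\mathrm{Id}$, giving $\grave M(-\lm,T\ph) = PJ\,\grave M^{-1}J^{-1}P$. Since the coefficient matrix $Q$ is trace-free, Liouville's formula yields $\det\grave M = \det\Mz(0)=1$, whence $\grave M^{-1}=\smat{\grave m_4 & -\grave m_2\\ -\grave m_3 & \grave m_1}$; conjugating by $PJ$ restores the signs of the off-diagonal entries and produces $\smat{\grave m_4 & \grave m_2\\ \grave m_3 & \grave m_1}$, as claimed.
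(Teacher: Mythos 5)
Your proposal is correct and complete. The paper itself offers no proof of this lemma — it simply attributes the symmetries to Gr\'ebert--Kappeler — so there is nothing to compare against; but your argument is the standard and expected one: rewrite $\Lz(\ph)M=\lm M$ as $\partial_x M=Q(x,\lm,\ph)M$ with $M(0)=\mathrm{Id}$, check that conjugation by $J$, by $R_{\al/2}$, and (after reversing the flow via right-composition with $\grave{M}(\lm,\ph)^{-1}$) by $PJ$ transforms $Q$ correctly, and invoke uniqueness for the linear initial value problem. All the individual computations check out, including $PJ=\smat{-1&0\\0&1}$, the matching $-PJ\,Q(x,\lm,T\ph)\,(PJ)^{-1}=Q(x,-\lm,T\ph)$, and the use of $\det\grave{M}=1$ (Liouville, since $\operatorname{tr}Q=0$) in the final entrywise formula for $\grave{M}(-\lm,T\ph)$.
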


\section{Analyticity}
\label{a:ana-maps}

\begin{lem}
\label{ana-vanish}
Let $X_r$ be an $\R$-Banach space and denote by $X$ its complexification. Assume that $U \subset X$ is an open connected neighborhood of $U_r = U \cap X_r$ and that $f\colon U \to \C$ is an analytic map. If $f\vert_{U_r} = 0$, then $f \equiv 0$.\fish
\end{lem}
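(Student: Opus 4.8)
The statement is the identity theorem for analytic maps between complex Banach spaces, and the plan is to first establish local vanishing at every real point and then to globalize by connectedness. The spaces decompose as $X = X_r \oplus \ii X_r$, so a displacement from a real point is of the form $u = c + \ii d$ with $c,d\in X_r$, and the whole difficulty is that $X_r$ is ``thin'' in $X$ yet must nonetheless control $f$.

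First I would fix a base point $x_0\in U_r$ and show that $f$ vanishes on an entire neighborhood of $x_0$ in $X$. The naive attempt to move along a single complex line $\zeta\mapsto f(x_0+\zeta v)$ with $v\in X_r$ only reaches points whose real and imaginary parts are parallel, so it does not sweep out a full neighborhood; the fix is to let the real and imaginary parts vary independently via a two-parameter family. Given a target $u=c+\ii d$ with $c,d\in X_r$ and $\norm{u}$ small, I would consider the two-variable function $h(z,w)\defl f(x_0+zc+wd)$, which is holomorphic on a polydisc $\set{\abs{z}<\rho,\ \abs{w}<\rho}$ with $\rho>1$ once $\norm{u}$ is small enough that the image stays inside $U$. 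For real arguments $z=s$, $w=t$ the point $x_0+sc+td$ lies in $U_r$, so $h$ vanishes on the real square. Iterating the classical one-dimensional identity theorem — freeze $w=t$ real and vary $z\in\C$, then freeze $z\in\C$ and vary $w\in\C$ — forces $h\equiv 0$ on the polydisc. Evaluating at $(z,w)=(1,\ii)$ yields $f(x_0+c+\ii d)=f(x_0+u)=0$, which is precisely local vanishing near $x_0$.

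Then I would globalize. Let $A$ denote the set of points $p\in U$ at which every Fréchet derivative $D^{n}f(p)$ vanishes. Since the derivatives are continuous, $A$ is closed; and at any $p\in A$ the local Taylor series vanishes identically, so $f\equiv 0$ on a ball around $p$, whence that ball lies in $A$ and $A$ is open. By the first step $A$ contains $U_r$, which is nonempty, so $A$ is a nonempty open and closed subset of the connected set $U$. Therefore $A=U$ and $f\equiv 0$.

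The only genuinely delicate point is the local step, and specifically the realization that one must not restrict to complex lines through $x_0$ but must allow the two real components of the displacement to move independently; once this two-variable family is set up, the argument is just the several-variables identity theorem obtained by iterating the one-dimensional statement. The globalization is the standard open–closed/connectedness argument and presents no obstacle. (An equivalent route would expand $f$ in its symmetric $n$-linear derivatives at $x_0$, deduce from vanishing on real directions and polarization that $D^{n}f(x_0)$ vanishes on $X_r^{n}$, and then use $\C$-multilinearity to extend the vanishing to $X^{n}$; I find the two-variable reduction cleaner, but both isolate the same essential idea.)
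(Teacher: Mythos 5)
Your proof is correct, and the globalization step (all derivatives vanish on an open--closed nonempty subset of the connected set $U$) is the same identity-theorem argument the paper uses. Where you genuinely diverge is the local step. The paper argues algebraically: it expands $f$ in its Taylor series at $u\in U_r$, observes that $d_u^nf(h,\dots,h)=0$ for $h\in X_r$ because $f$ vanishes on real segments through $u$, and then uses symmetry, polarization, and the $\C$-multilinearity of $d_u^nf$ to conclude that each $d_u^nf$ vanishes on all of $X^n=(X_r+\ii X_r)^n$, hence $f\equiv 0$ near $u$ -- this is exactly the ``equivalent route'' you sketch in your closing parenthesis. Your chosen route instead introduces the two-parameter holomorphic family $h(z,w)=f(x_0+zc+wd)$ and kills it by iterating the one-dimensional identity theorem on a polydisc of radius $\rho>1$, then evaluates at $(1,\ii)$. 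Your version avoids polarization entirely and correctly identifies the one trap (a single complex line through $x_0$ only reaches displacements with parallel real and imaginary parts), at the modest cost of checking that the polydisc image stays in $U$ and that the projections $u\mapsto(c,d)$ of the complexification are bounded so that your local neighborhoods really are neighborhoods in $X$; the paper's version localizes all the work in the multilinear algebra of the derivatives and makes the role of $\C$-multilinearity explicit. Both are complete; the only cosmetic caveat, shared with the paper, is that the statement implicitly assumes $U_r\neq\emptyset$, which you flag by asserting nonemptiness of $A$.
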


\begin{proof}
Near any $u\in U_{r}$ the map $f$ is represented by its Taylor series,
\[
  f(u+h) = \sum_{n \ge 0} \frac{1}{n!} d_u^n f(h, \ldots, h),
\]
where the series converges absolutely and uniformly (cf. e.g. \cite[Theorem A.3]{Grebert:2014iq}). Since $f|_{U_r} = 0$, it follows that for any $h \in X_r$ and any $n\ge 0$, $d_u^n f(h, \ldots, h) = 0$. As $f$ is analytic, $d_u^n f$ is symmetric and $\C$-multilinear, hence it follows from the polarization identity that $d_u^n f(h, \ldots, h) = 0$ holds also for any $h$ in the complexification $X$ of $X_{r}$.
This implies $f \equiv 0$ in a neighborhood $V_u$ of $u$ with $V_u \subset U$. Since $U$ is connected it follows that $f\equiv 0$ on all of $U$ by the identity theorem.\qed
\end{proof}

\end{appendix}

\bibliography{bibcomplete}

\end{document}